\newtheorem{theo}{Theorem}
\newtheorem{prop}{Proposition}[section]
\newtheorem{fact}[prop]{Fact}
\newtheorem{coro}[prop]{Corollary}
\newtheorem{lemma}[prop]{Lemma}
\theoremstyle{definition}
\newtheorem{defin}[prop]{Definition}
\newtheorem{example}[prop]{Example}
\newtheorem{remark}[prop]{Remark}
\newcommand{\bff}{{\mathbf f}}
\newcommand{\bt}{{\mathbf t}}
\newcommand{\emptyplug}{{\mathbf p_\circ}}
\newcommand{\fullplug}{{\mathbf p_\bullet}}
\newcommand{\plug}{\operatorname{plug}}
\newcommand{\floorop}{\operatorname{floor}}
\newcommand{\nvert}{\operatorname{vert}}
\newcommand{\thin}{\operatorname{thin}}
\newcommand{\flux}{\operatorname{flux}}
\newcommand{\Flux}{\operatorname{Flux}}
\newcommand{\black}{\operatorname{bl}}
\newcommand{\white}{\operatorname{wh}}
\newcommand{\hz}{\operatorname{hz}}
\newcommand{\vt}{\operatorname{vt}}
\newcommand{\sign}{\operatorname{sign}}
\newcommand{\tk}{\operatorname{tk}}
\newcommand{\inv}{\operatorname{inv}}
\newcommand{\Inv}{\operatorname{Inv}}
\newcommand{\NN}{{\mathbb{N}}}
\newcommand{\ZZ}{{\mathbb{Z}}}
\newcommand{\RR}{{\mathbb{R}}}
\newcommand{\cT}{{\cal T}}
\newcommand{\cC}{\mathcal{C}}
\newcommand{\cD}{{\cal D}}
\newcommand{\cP}{{\cal P}}
\newcommand{\cR}{{\cal R}}
\newcommand{\cG}{{\cal G}}
\newcommand{\Tw}{\operatorname{Tw}}
\newcommand{\tw}{\operatorname{tw}}
\begin{document}
\title{Domino tilings and flips \\ in dimensions $4$ and higher}
\author{Caroline Klivans \and Nicolau C. Saldanha}
% Juliana Freire
% \and Pedro H. Milet \and Nicolau C. Saldanha}

\maketitle

\begin{abstract}

In this paper we consider domino tilings
of bounded regions in dimension $n \geq 4$.
%In this setting, a domino is a $2\times 1\times \cdots \times 1$
%rectangular block.  
%A flip is a local move: two neighboring parallel dominoes
%are removed and placed back in a different position.
We define the {\em twist} of such a tiling,
an elements of $\ZZ/(2)$,
and prove that it is invariant under flips,
a simple local move in the space of tilings.

We investigate which regions $\cD$ are {\em regular}, i.e. whenever two
tilings $\bt_0$ and $\bt_1$ of $\cD \times [0,N]$ have the same twist
then $\bt_0$ and $\bt_1$ can be joined by a sequence of flips provided
some extra vertical space is allowed.  We prove that all boxes
%$\cD = [0,L_1] \times \cdots \times [0,L_{n-1}]$
are regular {\em except} $\cD = [0,2]^3$.

%% A contractible region $\cD$ is called {\em regular} if whenever two
%% tilings $\bt_0$ and $\bt_1$ of $\cD \times [0,N]$ have the same twist
%% then $\bt_0$ and $\bt_1$ can be joined by a sequence of flips provided
%% some extra vertical space is allowed.  We investigate which regions
%% are regular and, in particular, prove that all boxes
%% %$\cD = [0,L_1] \times \cdots \times [0,L_{n-1}]$
%% are regular {\em except} $\cD = [0,2]^3$.

Furthermore, given a regular region $\cD$,
we show that there exists a value  $M$
(depending only on $\cD$) such that
if  $\bt_0$ and $\bt_1$ are tilings of equal twist of $\cD \times [0,N]$
then the corresponding tilings can be joined
by a finite sequence of flips in  $\cD \times [0,N+M]$.
As a corollary we deduce that, for regular $\cD$ and large $N$,
the set of tilings of $\cD \times [0,N]$
has two twin giant components under flips,
one for each value of the twist.
\end{abstract}

%%%%%%%%%%%%%%%%%%%%%%%%%%%%%%%%%%%%%%%%%%%%%%%%%%%%%%%%%%%%%%%%%%%%%%%

\section{Introduction}

\footnotetext{2010 {\em Mathematics Subject Classification}.
Primary 05B45; Secondary 52C20, 52C22, 05C70.
{\em Keywords and phrases} Higher dimensional tilings, dominoes, dimers}

A domino in dimension $n$ is a $2\times \overbrace{1\times \cdots
  \times 1}^{n-1}$ rectangular block.  We consider domino tilings of
bounded cubiculated regions in $\mathbb{R}^n$ for $n \geq 4$.  The case $n = 2$ has
been extensively studied, with many remarkable results, see
e.g. \cite{thurston1990}, \cite{Kenyon_lectures}, \cite{survey}.
Almost every question about domino tilings seems to be much harder for $n \ge 3$, see e.g.~\cite{Igor}.

The three dimensional case has distinctive behavior.  
%% Much previous work looks at $n=3$ which appears to have distinctive
%% behavior.
The series of papers \cite{FKMS, primeiroartigo,
  segundoartigo, regulardisk} investigate spaces of three-dimensional tilings, connectivity under local moves, and connections to certain algebraic parameters.  Briefly summarizing:

%Tilings under local moves, twist.  In $n \geq 4$ the situation actually becomes simpler.  Twist in $\mathbb{Z}/2$ is only a bit.  ----

A \emph{flip} is a local move --  two neighboring parallel dominoes
are removed and placed back in a different position.
Define an equivalence relation on the set $\cT(\cR)$
of domino tilings of a region $\cR$ as
$\bt_0 \approx \bt_1$ if and only if
the tilings $\bt_0$ and $\bt_1$
can be joined by a finite number of flips.

\begin{itemize}
\item If a region $\cR$ of dimension $n=2$
is connected and simply connected
then the equivalence relation is trivial:
for any two tilings $\bt_0, \bt_1$ of the region $\cR$
we have $\bt_0 \approx \bt_1$
(see \cite{thurston1990}).

\item Also for $n = 2$, if a region is planar and connected
but not simply connected
then the {\em flux} is an invariant under flips:
$\bt_0 \approx \bt_1$ if and only if 
$\Flux(\bt_0) = \Flux(\bt_1)$.
More generally, if $\cR$ is a quadriculated surface then
$\Flux(\bt_0) \ne \Flux(\bt_1)$ implies $\bt_0 \not\approx \bt_1$;
if $\Flux(\bt_0) = \Flux(\bt_1)$,
we usually (but not always) have $\bt_0 \approx \bt_1$
(see \cite{saldanhatomei1995}).

\item For $n = 2$, if a region is planar and connected, the number of
  tilings of the region can be enumerated efficiently.  Kasteleyn
  matrices, in particular, provide a linear algebraic approach to the counting problem, (see \cite{Kasteleyn}). 

\item For $n = 3$, if a region is contractible
then the {\em twist} is an integer-valued invariant under flips.
Thus, if $\Tw(\bt_0) \ne \Tw(\bt_1)$ then $\bt_0 \not\approx \bt_1$;
if $\Tw(\bt_0) = \Tw(\bt_1)$,
we usually (but not always) have $\bt_0 \approx \bt_1$ (see
\cite{FKMS, primeiroartigo, segundoartigo}).

\item If $\cR$ is a cubiculated manifold of dimension $3$,
$\Flux(\bt_0)$  is also invariant under flips.
%% There is a definition of $\Tw(\bt_0) \in \ZZ/(d)$,
%% where $d \in \NN$ is a function of the flux
%% (with the convention $\ZZ/(0) = \ZZ$).
If $\bt_0 \approx \bt_1$ we have
$\Flux(\bt_0) = \Flux(\bt_1)$ and $\Tw(\bt_0) = \Tw(\bt_1)$.
If $\Flux(\bt_0) = \Flux(\bt_1)$ and $\Tw(\bt_0) = \Tw(\bt_1)$
we usually (but not always) have $\bt_0 \approx \bt_1$ (see \cite{FKMS}).

\end{itemize}
%(see \cite{FKMS, primeiroartigo, segundoartigo, regulardisk}).

In this paper we investigate the above concepts for $n \ge 4$.  There is a fundamental shift in dimensions $4$ and higher.  
In Section~\ref{section:twist} we define the {\em twist} of a tiling which is no longer an element of $\ZZ$ but is naturally
an element of $\ZZ/(2)$. The definition of twist for $n \ge 4$ is in a sense simpler, see
%our exposition does not require familiarity with the case $n = 3$
%except where we compare them
 Lemma \ref{lemma:ZZ2}. % and Remark \ref{remark:twist3}.  
In Theorem~\ref{lemma:fliptrit} we prove that the twist is invariant under flips.
%This is related but not identical to the twist in dimension $3$:
%recall that for $n = 3$ the twist assumes values in $\ZZ$.
%The definition of twist for $n \ge 4$ is in a sense simpler, see
%our exposition does not require familiarity with the case $n = 3$
%except where we compare them
% Lemma \ref{lemma:ZZ2} and Remark \ref{remark:twist3}.

Sections~\ref{section:plug} and~\ref{section:Delta} are concerned with enumeration and
construct Kasteleyn matrices for the four dimensional case.  As in
\cite{regulardisk}, we focus on {\em cylinders}: regions of the form
$\cR_N = \cD \times [0,N]$ where $\cD \subset \RR^{n-1}$ is a balanced
contractible region.  When $\cD$ is fixed and $N$ goes to infinity, we
prove that the set of tilings $\cT(\cR_N)$ is almost evenly split
between tilings with twists $0$ and $1$ (see Examples
\ref{example:222}, \ref{example:223} and Corollary
\ref{coro:quasibalance}).  This is in contrast to the
three-dimensional case where it is believed that the twist is normally
distributed.  Our result implies, however, that also in dimension $n =
3$, $(\Tw(\bt) \bmod 2)$ is almost evenly split between $0$ and $1$.

If $\bt_0$ and $\bt_1$ are tilings of $\cR_{N_0}$ and $\cR_{N_1}$,
respectively, then $\bt_0$ and $\bt_1$ can be concatenated
to define a tiling $\bt_0 \ast \bt_1$ of $\cR_{N_0 + N_1}$.
If $M$ is even, the region $\cR_{M}$ admits a simple tiling,
the {\em vertical} tiling $\bt_{\nvert,M}$,
formed by dominoes of the form $s \times [k,k+2]$
where $s \subset \cD$ is a unit cube
and $k \in [0,M)$ is an even integer.
Also following \cite{regulardisk},
we define a weaker equivalence relation:
$\bt_0 \sim \bt_1$ if and only if there exists an even integer $M$
such that $\bt_0 \ast \bt_{\nvert,M} \approx \bt_1 \ast \bt_{\nvert,M}$.
Under this equivalence relation,
concatenation defines the {\em domino group} $G_{\cD}$.

Given $\cD$, we  consider the {\em domino complex}, a $2$-complex $\cC_{\cD}$
with a base point $\emptyplug$.
Tilings of $\cR_N$ are interpreted as closed paths of length $N$
in $\cC_{\cD}$, starting and ending at $\emptyplug$.
Two tilings $\bt_0$ and $\bt_1$ satisfy
$\bt_0 \sim \bt_1$ if and only if their paths are homotopic.
Thus, there exists a natural isomorphism
$G_{\cD} \simeq \pi_1(\cC_{\cD};\emptyplug)$
between the domino group and the fundamental group of $\cC_{\cD}$, see Sections~\ref{section:complex} and~\ref{section:hamiltonian}.

For $n \ge 4$,
a region $\cD \subset \RR^{n-1}$ is {\em regular}
if and only if its domino group satisfies
$G_{\cD} \simeq \ZZ/(2) \oplus \ZZ/(2)$.
Equivalently, $\cD$ is regular if and only if
$\Tw(\bt_0) = \Tw(\bt_1)$ implies $\bt_0 \sim \bt_1$
(where $\bt_0$ and $\bt_1$ are tilings of $\cR_N = \cD \times [0,N]$).
%The reader should notice that, as in \cite{regulardisk},
For $n = 3$, 
a region $\cD \subset \RR^{n-1}$ is {\em regular}
if and only if $G_{\cD} \simeq \ZZ \oplus \ZZ/(2)$.

%The other version of the definition is unchanged however:
%also for $n = 3$,
%$\cD$ is regular if and only if
%$\Tw(\bt_0) = \Tw(\bt_1)$ implies $\bt_0 \sim \bt_1$.
%The difference is that, for $n = 3$, the twist assumes values in $\ZZ$ but,
%for $n \ge 4$, the twist assumes values in $\ZZ/(2)$.

%It turns out that many (most?) regions $\cD \subset \RR^{n-1}$ are regular.
%The case $n = 3$ is discussed in \cite{regulardisk};
%the following result illustrates the case $n \ge 4$.

In Sections~\ref{section:small}, \ref{section:22L}, \ref{section:box4}
and \ref{section:box5}, we characterize the case for boxes as follows:

\begin{theo}
\label{theo:regularbox}
Let $n \ge 4$ and consider positive integers
$L_1 \ge \cdots \ge L_{n-1} \ge 2$, at least one of them even.
Consider the box $\cD = [0,L_1] \times \cdots \times [0,L_{n-1}]$.
If $n = 4$ and $L_1 = L_2 = L_3 = 2$ then $\cD$ is not regular.
In every other case, $\cD$ is regular.
\end{theo}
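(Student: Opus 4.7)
The plan is to split Theorem \ref{theo:regularbox} into two parts: the non-regularity of the single exceptional case $\cD = [0,2]^3$ (with $n=4$), and the regularity of every other admissible box. For any $\cD$ the domino group $G_{\cD}$ surjects onto $\ZZ/(2) \oplus \ZZ/(2)$: one factor is detected by the twist (a flip invariant by Theorem \ref{lemma:fliptrit}, hence well-defined on $G_{\cD}$) and the other by the parity of $N$ (flips preserve the number of layers and the weak equivalence $\sim$ only allows even-length vertical extensions, so $N \bmod 2$ descends to $G_{\cD}$). Regularity is then the statement that this surjection is an isomorphism, i.e.\ $|G_{\cD}| = 4$; non-regularity amounts to exhibiting an extra element.

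For the exceptional case I would handle $\cD = [0,2]^3$ in Section~\ref{section:small} by a direct computation. The cross-section is small enough that the domino complex $\cC_{[0,2]^3}$ is combinatorially tractable, so I would exhibit a concrete closed path at $\emptyplug$ representing a tiling $\bt$ of $[0,2]^3 \times [0,N]$ with trivial twist and even $N$, and then verify (either by constructing a third algebraic invariant or via a finite-state enumeration of $\pi_1(\cC_{[0,2]^3};\emptyplug)$ through the plug transitions) that this loop is not null-homotopic. The conceptual difficulty here is producing a genuine obstruction rather than merely failing to find a flip sequence.

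For the regularity of every other box I would induct on the multi-index $(L_1, \ldots, L_{n-1})$. The base cases, treated in Sections~\ref{section:small}--\ref{section:22L}, are the minimal admissible boxes: $\cD = [0,2] \times [0,2] \times [0,L]$ for even $L \ge 4$ when $n=4$, and $\cD = [0,2]^{n-1}$ when $n \ge 5$. For each I would use the plug machinery of Sections~\ref{section:plug}--\ref{section:complex} to reduce the regularity claim to showing that every loop of trivial twist in $\cC_{\cD}$ at $\emptyplug$ is null-homotopic, then construct the required homotopies via explicit flip sequences (possibly after bounded vertical extensions). The extra room available in $[0,2]^2 \times [0,L]$ for $L \ge 4$ (and more so in $[0,2]^{n-1}$ for $n \ge 5$) is precisely what $[0,2]^3$ lacks, so it is natural to expect these base cases to succeed exactly where $[0,2]^3$ fails.

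The inductive step (Sections~\ref{section:box4} and~\ref{section:box5}) is that regularity of $\cD$ implies regularity of $\cD'$ obtained by enlarging one side of $\cD$ (respecting the parity constraint). Given a tiling $\bt$ of $\cD' \times [0,N]$, the plan is to use flips together with a bounded vertical extension to peel off a canonical slab along the enlarged direction, reducing $\bt$ to a tiling of $\cD \times [0,N']$ of equal twist, and then invoke the induction hypothesis. The main obstacle I expect is this peeling step: showing uniformly in the input tiling that the canonical slab is flip-reachable will require combining the Kasteleyn-style enumeration of Section~\ref{section:Delta} with the explicit base-case flip connectivity. The careful accounting of twist and parity under concatenation is another delicate point, especially in passing from dimension $n=4$ (Section~\ref{section:box4}) to dimension $n \ge 5$ (Section~\ref{section:box5}), where the additional geometric direction must be leveraged to supply the flexibility absent in the three-dimensional cross-section.
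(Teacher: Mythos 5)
Your high-level decomposition (non-regularity of the one exception, regularity of everything else) matches the paper, and you correctly identify that regularity is the statement $G_{\cD}\cong\ZZ/(2)\oplus\ZZ/(2)$ with the two obvious surjective invariants being twist and $N\bmod 2$. But both halves of your plan leave the central step as an acknowledged gap, and in each case the paper supplies a concrete technique you have not found.

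For the irregularity of $\cD=[0,2]^3$, you write that one must either build ``a third algebraic invariant'' or do a brute-force enumeration of $\pi_1$. The paper does the former, and the point is what that invariant is: Lemma~\ref{lemma:222} constructs a $\ZZ$-valued twist $\Tw_{\ZZ}:G_{\cD}\to\ZZ$ via a local cocycle $\tau(d,s)\in\{\pm\frac14,0\}$ on the cylindrical model $\hat\cD=C_4\times[0,2]$ (Figure~\ref{fig:tauhat}), checking that the resulting quantity is an integer and is invariant under both flips and pseudoflips. The crucial observation, stated in the figure caption, is that the sign conventions needed to make $\tau$ well defined are only consistent in this one small case --- precisely why $[0,2]^3$ is the unique exception. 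Your proposal does not identify this mechanism, and ``a finite-state enumeration of $\pi_1$'' is not a realistic substitute since the relevant groups act on an infinite cover.

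For the regular cases, your inductive step (peel a canonical slab off the enlarged direction, reducing $\cD'\times[0,N]$ to $\cD\times[0,N']$) is not what the paper does and, as you note yourself, you do not know how to carry it out; invoking the Kasteleyn matrix $\Delta$ of Section~\ref{section:Delta} will not help, since that machinery is purely enumerative and says nothing about flip connectivity. The paper's argument rests on two ingredients you have not used. First, Section~\ref{section:hamiltonian} (Lemma~\ref{lemma:generators}) shows that for a Hamiltonian region $\cD$ the group $G^{+}_{\cD}$ is generated by finitely many explicit tilings $\bt_{d;\phi}$, each containing a single domino $d$ that does not respect the fixed Hamiltonian path; this reduces regularity to identifying these finitely many generators in $G_{\cD}$. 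Second, each such $\bt_{d;\phi}$ is analyzed by \emph{unfolding} $\cD$ along the Hamiltonian path to a lower-dimensional box (a planar rectangle in Section~\ref{section:box4}, a box of dimension one less in Section~\ref{section:box5}): since $d$ is the only path-violating domino, the whole tiling unfolds, regularity of the unfolded region (from \cite{regulardisk} for rectangles, or the induction hypothesis on $n$) identifies its class, and folding back transfers the conclusion to $\cD$. The induction is therefore on the dimension $n$, not on the side lengths; in particular $[0,2]^{n-1}$ for $n\ge 5$ is not a base case (it unfolds to $[0,2]^{n-3}\times[0,4]$, one dimension down), and the genuine base cases are $[0,2]^2\times[0,L]$ for all $L\ge 3$ --- not only even $L\ge 4$ --- together with the bulk $n=4$ boxes handled via two different rectangle unfoldings and the computer-verified Fact~\ref{fact:2233}.
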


\begin{remark}
\label{remark:222-223}
In the irregular case $\cD = [0,2]^3 \subset \RR^3$ 
there exists an isomorphism $G_{\cD} \approx \ZZ \oplus \ZZ/(2)$.
This example is discussed in Example \ref{example:222}
(Section \ref{section:examples})
and the claim is proved in Lemma \ref{lemma:222group}
(Section \ref{section:small}).
The box $\cD = [0,2]^2 \times [0,3] \subset \RR^3$,
on the other hand, is regular:
see Remark \ref{remark:2233}, Example \ref{example:223}
and Lemma \ref{lemma:22L}.
\end{remark}

By definition, if $\cD$ is regular and $\bt_0, \bt_1$ are tilings of $\cR_N$
with $\Tw(\bt_0) = \Tw(\bt_1)$ then
$\bt_0$ and $\bt_1$ can be joined by a finite sequence of flips
provided some extra vertical space $M$ is allowed.
The next result, proved in Section~\ref{section:smallM}, shows that the amount of extra space is bounded
(as a function of $N$).

\begin{theo}
\label{theo:smallM}
Let $n \ge 4$.
%Let $\cD \subset \RR^{n-1}$ be
%a balanced contractible cubiculated region.
If $\cD$ is regular then there exists $M \in 2\NN^\ast = \{2, 4, 6, \ldots\}$
(depending on $\cD$ only) such that:
if $N$ is a positive integer and
$\bt_0, \bt_1$ are tilings of $\cR_N = \cD \times [0,N]$
with $\Tw(\bt_0) = \Tw(\bt_1)$ then
$\bt_0 \ast \bt_{\nvert,M} \approx \bt_1 \ast \bt_{\nvert,M}$.
\end{theo}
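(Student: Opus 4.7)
My plan is to translate the problem into a statement about the domino complex $\cC_{\cD}$. A tiling of $\cR_N$ is a closed path of length $N$ in $\cC_{\cD}$ based at $\emptyplug$, and two tilings are $\sim$-equivalent precisely when the corresponding paths are freely homotopic; since $\cD$ is regular, $G_\cD \simeq \pi_1(\cC_{\cD};\emptyplug) \simeq \ZZ/(2)\oplus\ZZ/(2)$ has exactly four elements, and any two tilings of the same cylinder with equal twist lie in the same $\sim$-class. Fix once and for all, for each $g \in G_\cD$, a representative template tiling $\bt^*_g$ of a cylinder $\cR_{N_g}$, and set $N^\dagger = \max_g N_g$. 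The theorem will follow from a uniform \emph{stabilization lemma}: there exists $M^* \in 2\NN^\ast$, depending only on $\cD$, such that every tiling $\bt$ of $\cR_N$ with $N \ge N^\dagger$ satisfies
\[
\bt \ast \bt_{\nvert, M^*} \approx \bt^*_{[\bt]} \ast \bt_{\nvert, N + M^* - N_{[\bt]}}.
\]
Indeed, if $\bt_0, \bt_1$ are tilings of $\cR_N$ with $\Tw(\bt_0)=\Tw(\bt_1)$ then $[\bt_0]=[\bt_1]=g$, both right-hand sides agree, and transitivity of $\approx$ yields $\bt_0 \ast \bt_{\nvert, M^*} \approx \bt_1 \ast \bt_{\nvert, M^*}$; the finitely many exceptional cases $N < N^\dagger$ are absorbed by enlarging $M^*$.

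To prove the stabilization lemma I would proceed by \emph{peeling} the tiling from the top downward into canonical slabs. The local claim is: there exist uniform constants $L$ and $M_1$ such that for every tiling $\bt$ of $\cR_N$ with $N \ge L$, there is a flip-sequence inside $\cR_{N + M_1}$ transforming $\bt \ast \bt_{\nvert, M_1}$ into a tiling whose top $L$ layers depend only on the horizontal plug of $\bt$ at level $N - L$. This rests on two finiteness observations --- there are finitely many plugs in $\cD$, and for each ordered pair of plugs there are only finitely many tilings of $\cR_L$ realizing them --- together with the regularity of $\cD$, which makes any two such local tilings with equal local invariant $\sim$-equivalent after capping into closed cylinders. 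Taking the maximum of the resulting buffers over this \emph{finite} set of boundary data yields the uniform $M_1$. Iterating the peeling, while shepherding the buffer through the tiling via flips, eventually reduces $\bt$ to a standard vertical stack above a short residual block whose class in $G_\cD$ matches $[\bt]$; one final uniform-buffer step flips that block to the template $\bt^*_{[\bt]}$.

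The main obstacle is precisely the passage from the existential buffer provided by regularity to a \emph{uniform} one. The definition of regularity supplies, for each individual pair of $\sim$-equivalent tilings, some even integer $M$, but does not bound it a priori. The way out is to reduce every instance to one of a finite list of canonical local instances: if a peeling step only ever modifies data in a bounded-height window, one may take $M_1$ to be the maximum over the finitely many such windows. Making this rigorous requires a \emph{gluing principle}: a flip sequence that alters only the tiling of a vertical slab may be performed with its buffer inserted immediately above (or below) that slab, without disturbing the complementary tiling. Establishing this localization --- that a local flip sequence needs only a local buffer, independently of how tall the cylinder is --- is where I expect the real technical work.
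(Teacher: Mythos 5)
Your approach is in the right spirit, and you have correctly identified the exact place where the work lies: the passage from the \emph{existential} buffer given by the definition of regularity to a \emph{uniform}, \emph{local} one. But the proposal does not close that gap, and as you phrase it the gap is genuine. The statement ``a flip sequence that alters only a vertical slab may be performed with its buffer inserted next to that slab'' is not the issue --- that part is trivial, since a flip sequence confined to floors $[N-L,\,N+M_1]$ manifestly commutes with an untouched tiling below. The real issue, which you raise but do not resolve, is the \emph{existence} of such a confined flip sequence. Regularity tells you that two tilings of a closed cylinder, with equal twist, become $\approx$-equivalent after some padding; it says nothing about two partial tilings above a common plug, and in particular it does not guarantee a flip sequence that preserves the plug at level $N-L$. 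If you simply cap into a closed cylinder and appeal to regularity, the resulting flip sequence may move the cap, and you have no way to confine it. You are also right that ``depends only on the plug at level $N-L$'' is too weak: a flip sequence supported above $N-L$ preserves the local twist of the upper slab, so the canonical top configuration must depend on an ordered pair (plug, local invariant), not the plug alone.

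The paper resolves precisely this point by lifting to the universal cover $\tilde\cC_{\cD}$, which is a \emph{finite} complex since $G_{\cD} \simeq \ZZ/(2)\oplus\ZZ/(2)$ is finite. A vertex $p$ of $\tilde\cC_{\cD}$ encodes simultaneously the plug $\Pi(p)$ and the homotopy class of the initial segment, so the ``local invariant'' is automatically carried along, and the number of pairs (vertex, floor, vertex) is finite. One then fixes, for each $p\in\tilde\cP$, a template path $\bt_p$ from $\emptyplug$ to $p$ and canonicalizes from the \emph{bottom} up: at step $k$, the initial segment $\bt_{\dagger,[0,k]}$ is replaced by $\bt_{p_k}$, and each elementary step is a homotopy, rel endpoints, between $\bt_{p_k}\ast\bff$ and $\bt_{p_{k+1}}$ — two paths in the simply connected $\tilde\cC_{\cD}$ from $\emptyplug$ to the \emph{same} vertex $p_{k+1}$. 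Because both are closed corks between the same pair of vertices in the cover and have bounded length, the flip sequence realizing the homotopy — padded below with vertical floors — lives in a bottom slab of bounded height, while the untouched suffix $\bt_{\dagger,[k+1,N]}$ is simply stacked on top. There are only finitely many triples $(p_0,p_1,\bff)$ in $\tilde\cC_{\cD}$, so the maximum of the buffers $M_{p_0,p_1,\bff}$ gives the uniform $M$. In short: the confinement you need does not come from a separate ``gluing principle''; it is a byproduct of (i) doing the homotopy rel \emph{both} endpoints in the universal cover, which forces the flip sequence to stay inside the segment being replaced, and (ii) finiteness of $\tilde\cC_{\cD}$, which makes the bound uniform. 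Without passing to the cover, you would have to build both of these by hand, and the proposal as written does not do so.
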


%The following result is then an easy consequence.

\begin{coro}
\label{coro:twin}
Let $\cD \subset \RR^{n-1}$ be a regular region, with $n \ge 4$.
There exist connected components (under $\approx$)
$T_{i} \subset \cT(\cR_N)$, $i \in \ZZ/(2)$, such that
\[
\lim_{N \to \infty} \frac{|T_{0}|}{|\cT(\cR_N)|} =
\lim_{N \to \infty} \frac{|T_{1}|}{|\cT(\cR_N)|} = \frac12,
\qquad
\limsup_{N \to \infty}
\frac{\log|\cT(\cR_N) \smallsetminus (T_{0} \cup T_{1})|}{\log|\cT(\cR_N)|} < 1.
\]
\end{coro}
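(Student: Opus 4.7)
Fix $M$ as in Theorem \ref{theo:smallM}, and let $\lambda$ be the Perron eigenvalue of the plug transfer matrix for $\cD$, so that $|\cT(\cR_N)| = \Theta(\lambda^N)$. The plan is to define $T_i$ as the $\approx$-class in $\cT(\cR_N)$ of a reference tiling $\bt_N^{(i)}$ of twist $i$; concretely, fix a small $N_0$ and a tiling $\bt_{\text{sd}}^{(i)} \in \cT(\cR_{N_0})$ of twist $i$ (existence of both values of twist at some small $N_0$ follows from the examples cited in the introduction and, more generally, from Corollary \ref{coro:quasibalance}), and set $\bt_N^{(i)} = \bt_{\text{sd}}^{(i)} \ast \bt_{\nvert, N - N_0}$. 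Disjointness $T_0 \cap T_1 = \emptyset$ is automatic from flip-invariance of the twist (Theorem \ref{lemma:fliptrit}).

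The argument centers on the set $B_N \subset \cT(\cR_N)$ of tilings containing $\bt_{\nvert,M}$ as a consecutive sub-block at some position $[k, k+M]$. Two reshape lemmas are needed. (i) Reshape-before: if $\bt = \bt_a \ast \bt_{\nvert,M} \ast \bt_b$ and $\bt_a' \in \cT(\cR_{|\bt_a|})$ has $\Tw(\bt_a') = \Tw(\bt_a)$, then $\bt \approx \bt_a' \ast \bt_{\nvert,M} \ast \bt_b$ in $\cR_N$. This is an immediate application of Theorem \ref{theo:smallM}: the flip sequence joining $\bt_a \ast \bt_{\nvert,M}$ to $\bt_a' \ast \bt_{\nvert,M}$ lives entirely in the sub-cylinder $\cR_{|\bt_a|+M}$ and therefore extends unchanged to $\cR_N$ with $\bt_b$ attached. (ii) Reshape-after: the symmetric statement with $\bt_b$ reshaped, which follows from (i) via the reflection $t \mapsto N-t$, preserving flips and fixing $\bt_{\nvert,M}$ for even $M$.

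The key intermediate claim is $B_N \cap \{\bt : \Tw(\bt) = i\} \subset T_i$. Given $\bt \in B_N$ with $\bt = \bt_a \ast \bt_{\nvert,M} \ast \bt_b$, lemmas (i) and (ii) let me reshape $\bt_a$ and $\bt_b$ independently to seeded canonical forms, obtaining $\bt \approx \bt_{\text{sd}}^{(j_a)} \ast \bt_{\nvert, L} \ast \bt_{\text{sd}}^{(j_b)}$ with $L = N - 2N_0$ large and $j_a + j_b \equiv \Tw(\bt) \pmod 2$. The main obstacle is collapsing the four possible canonical forms $(j_a, j_b)$ to one class per total twist. I would handle this by sliding-and-merging. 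The slide: given $\bt_{\text{sd}}^{(j)} \ast \bt_{\nvert, 2M} \ast (\cdots)$, apply (i) with $\bt_a = \bt_{\text{sd}}^{(j)} \ast \bt_{\nvert, M}$ and new prefix $\bt_a' = \bt_{\nvert, M} \ast \bt_{\text{sd}}^{(j)}$ (same length, same twist $j$), obtaining $\bt_{\nvert, M} \ast \bt_{\text{sd}}^{(j)} \ast \bt_{\nvert, M} \ast (\cdots)$; the seed has moved $M$ units rightward with no net change in total slab length. Iterating, the two seeds of the canonical form are brought adjacent inside the central slab. Then reshape-after (ii), using a neighboring $M$-slice of slab as the extra vertical space required by Theorem \ref{theo:smallM}, replaces the adjacent pair (of twist $j_a+j_b$) by any tiling of the same length and twist: by $\bt_{\nvert, 2N_0}$ when $\Tw(\bt) = 0$ and by $\bt_{\nvert, N_0} \ast \bt_{\text{sd}}^{(1)}$ when $\Tw(\bt) = 1$. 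Sliding the result back into the reference position then gives $\bt \approx \bt_N^{(\Tw(\bt))}$, i.e.\ $\bt \in T_{\Tw(\bt)}$.

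The last step bounds $|\cT(\cR_N) \setminus B_N|$ by a standard transfer-matrix argument. Tilings in the complement correspond to plug-paths of length $N$ avoiding the specific $M$-letter word encoding $\bt_{\nvert,M}$; every transition in that word has strictly positive transfer weight, so the forbidden word has positive Parry measure and the corresponding sofic subshift has strictly smaller topological entropy, yielding $|\cT(\cR_N) \setminus B_N| = O(\mu^N)$ for some $\mu < \lambda$. Combining with the key intermediate claim and the quasi-balance $|\{\bt : \Tw(\bt) = i\}| \sim |\cT(\cR_N)|/2$ from Corollary \ref{coro:quasibalance}, we obtain $|T_i|/|\cT(\cR_N)| \to 1/2$ and $|\cT(\cR_N) \setminus (T_0 \cup T_1)| = O(\mu^N)$, whence $\limsup \log|\cT(\cR_N) \setminus (T_0 \cup T_1)| / \log|\cT(\cR_N)| \le \log \mu / \log \lambda < 1$. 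The delicate part is the sliding-and-merging step; the transfer-matrix estimate and the two reshape lemmas are essentially routine once Theorem \ref{theo:smallM} is in hand.
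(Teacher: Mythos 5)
Your proposal is correct in outline and follows the same overall strategy as the paper: fix reference tilings of each twist and take $T_i$ to be their flip-classes; show via Theorem \ref{theo:smallM} that every tiling with ``enough verticality'' of twist $i$ lands in $T_i$; show via a Perron--Frobenius spectral gap that tilings without enough verticality are exponentially rare; and apply quasi-balance (Corollary \ref{coro:quasibalance}, Lemma \ref{lemma:lambda}) for the $\tfrac12$ limits. The paper's own proof is a short combination of Lemma \ref{lemma:lambda}, Lemma \ref{lemma:verts} and Corollary \ref{coro:smallMcoro}, and you have essentially reconstructed the latter two inline: your key intermediate claim $B_N \cap \{\Tw = i\} \subset T_i$ is, up to phrasing, Corollary \ref{coro:smallMcoro}, and your forbidden-word estimate on $|\cT(\cR_N)\smallsetminus B_N|$ stands in for Lemma \ref{lemma:verts}. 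The visible divergence is the decomposition: you insist on $\bt_{\nvert,M}$ appearing as a consecutive block in the plug-sequence, whereas the paper only asks for $M$ vertical floors in arbitrary positions and then invokes, as a background fact inherited from \cite{regulardisk}, that vertical floors can be moved up and down by flips. Your sliding-and-merging is a self-contained replacement for that import, starting directly from Theorem \ref{theo:smallM}, at the cost of a slightly sharper Markov-chain estimate (a word-avoidance entropy gap rather than the paper's simpler deletion of vertical-floor transitions giving $A_\sharp$). The sliding step does have the technicalities you flag --- parity of slab lengths, prefixes or suffixes of length below $N_0$, and the fact that after sliding the two seeds are separated by a residual $\bt_{\nvert,K'}$ with $M \le K' < 2M$ rather than literally adjacent --- but these are routine to patch given Theorem \ref{theo:smallM}.
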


In other words, the set of tilings of $\cR_N$
has two twin giant components.
There are small components, but their total relative measure
goes to zero exponentially (when $N \to \infty$).

\smallskip

As in the three dimensional case,
it would be interesting to clarify which other contractible regions
(not boxes!) are regular.
It would also be interesting to study the domino group 
in other higher dimensional examples.
We remind the reader that in the three dimensional case
the domino group may have exponential growth.
For instance, if $\cD = [0,2] \times [0,L]$, $L \ge 3$,
we construct in \cite{regulardisk} a surjective homomorphism
from $G_{\cD}^{+}$ (a subgroup of index two of $G_{\cD}$)
to the free group $F_2$.
Does something similar happen in higher dimensions?
Notice that exponential growth of the domino group
implies that all connected components under flips are small
(unlike the situation described in Corollary \ref{coro:twin}).

\smallskip

{\bf Acknowledgments}
The authors thank Juliana Freire, Pedro Milet and Breno Pereira
for helpful conversations, comments and suggestions.
We also thank the referee for several thoughtful and helpful comments.
The second author is also thankful for the generous support of
CNPq, CAPES and FAPERJ (Brazil). 
We also thank the Brown-Brazil Initiative for financial support,
particularly during the visit of the second author to Brown University.

%%%%%%%%%%%%%%%%%%%%%%%%%%%%%%%%%%%%%%%%%%%%%%%%%%%%%%%%%%%%%%%%%%%%%%%

\section{Examples}
\label{section:examples}

In this section we present a few small examples.
In all but the smallest cases, the results were obtained by computer;
only some very small examples can be worked out by hand.
We also show how to draw a tiling $\bt$ of a region $\cR \subset \RR^4$,
particularly if $\cR$ is of the form $\cR = \cR_N = \cD \times [0,N]$,
$\cD \subset \RR^3$.

We first recall how tilings of $\cR_N = \cD \times [0,N] \subset \RR^3$
are drawn in \cite{regulardisk} for $\cD \subset \RR^2$,
$\cD$ a quadriculated disk.
An example is given in Figure \ref{fig:knot} for $\cD = [0,3]^2$,
$\cR = \cD \times [0,2]$.
This region admits $229$ tilings.
The first and last tiling in Figure \ref{fig:knot} admit no flip;
the other $227$ tilings form a single connected component under flips.
A tiling is represented as a sequence of floors;
vertical dominos (i.e., dominoes not contained in a floor)
are represented by two squares, one in each floor.

\begin{figure}[ht]
\begin{center}
\includegraphics[scale=0.275]{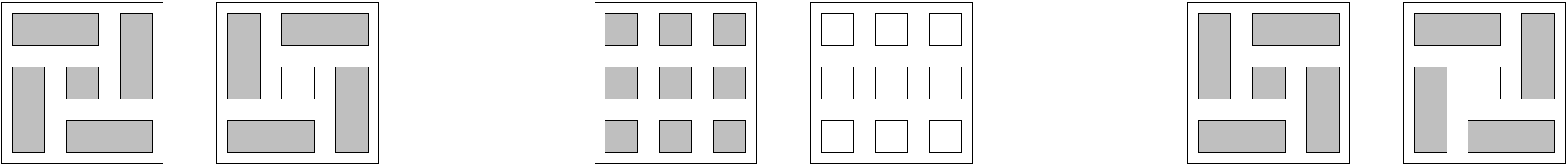}
\end{center}
\caption{Three tilings of the box
$\cR = [0,3]^2 \times [0,2] \subset \RR^3$.}
\label{fig:knot}
\end{figure}

Figures \ref{fig:3322a} and \ref{fig:3322b} show
three and two tilings of the box 
$\cR = [0,3]^2 \times [0,2]^2 \subset \RR^4$, respectively;
Figure \ref{fig:2222} shows two tilings of $\cR = [0,2]^4$.
Let $x_1, \ldots, x_4$ be the coordinates of $\RR^4$.
Each $3\times3$ square in Figure \ref{fig:3322a} represents
a slice of the form $i-1 \le x_3 \le i$, $j-1 \le x_4 \le j$,
$i, j \in \{1,2\}$.
The four squares (slices) are shown in the natural positions:
$i=1$ in the top row, $i=2$ in the bottom row;
$j=1$ in the left column, $j=2$ in the right column.

\begin{figure}[ht]
\begin{center}
\includegraphics[scale=0.275]{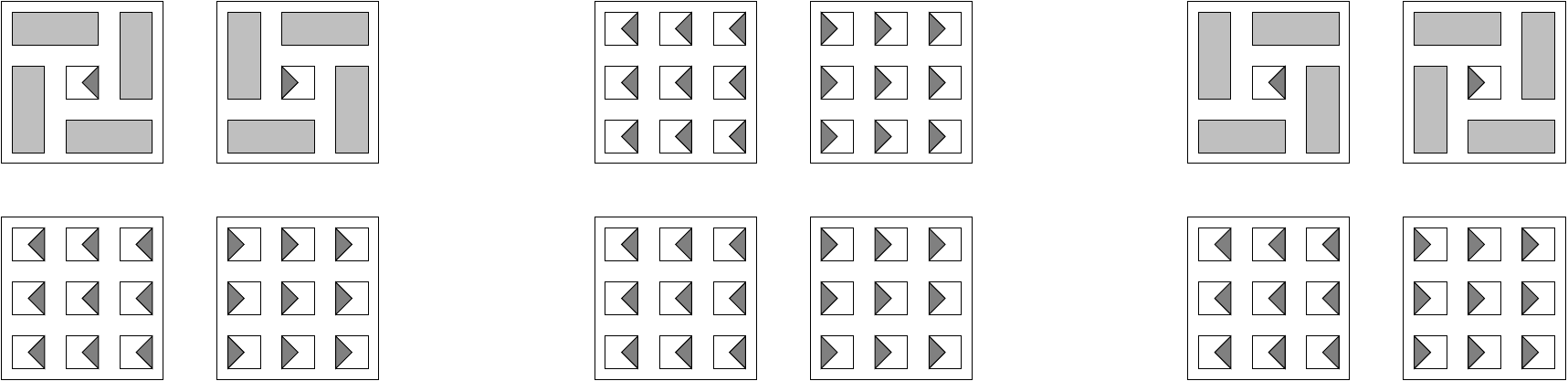}
\end{center}
\caption{Three tilings of the box
$\cR = [0,3]^2 \times [0,2]^2 \subset \RR^4$.}
\label{fig:3322a}
\end{figure}

Dominoes in the directions $x_1, x_2$ are contained in slices
and appear in the figure as dominoes.
Dominoes in the directions $x_3, x_4$ appear as a pair of unit squares,
one in one slice, one in another.
A dark triangle in such unit squares indicates
the position of the partner:
it is as near the partner (in the figure) as possible.
Thus, for instance, the two central unit squares
in each $3\times 3$ square in the top row of the first tiling
in Figure \ref{fig:3322a} form a domino.

\begin{remark}
\label{remark:2233}
The first and third tilings in Figure \ref{fig:3322a}
can be connected by a sequence of $22$ flips.
The reader should contrast this with the fact
that the first and third tilings in Figure \ref{fig:knot}
can not be connected by a sequence of flips,
not even if abundant extra $3$-dimensional space
with vertical dominoes is added around the box.
Indeed, the two tilings $\bt_{\pm 1}$  in Figure \ref{fig:knot}
have twists $\Tw(\bt_1) = +1 \ne -1 = \Tw(\bt_{-1})$
and flips preserve twist \cite{FKMS, regulardisk}.
\end{remark}

\begin{figure}[ht]
\begin{center}
\includegraphics[scale=0.275]{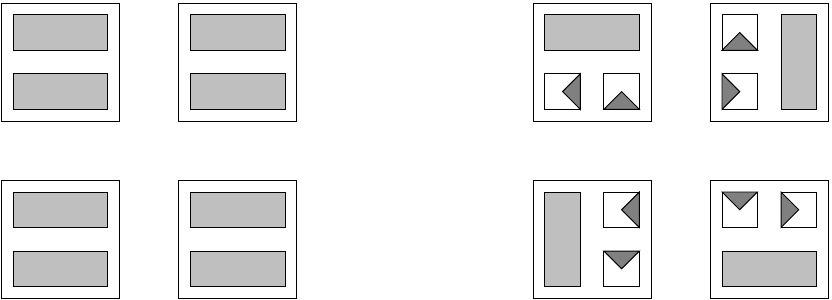}
\end{center}
\caption{Two tilings of the box
$\cR = [0,2]^4 \subset \RR^4$.}
\label{fig:2222}
\end{figure}

\begin{example}
\label{example:222}
The smallest non trivial region is $\cD = [0,2]^3$.
We describe the connected components via flips
of the space of tilings of $\cR = \cD \times [0,N]$.
\begin{itemize}
\item
For $N = 2$ there are $272$ tilings and $9$ components.
The largest component has size $264$ and includes all tilings of twist $0$.
There are $8$ tilings of twist $1$:
each one is isolated, as no flip is possible.
The first tiling in Figure \ref{fig:2222} shows a tiling
in the largest component;
the second tiling is isolated.
\item
For $N = 3$ there are three components:
the largest one has size $5985$ (i.e., includes $5985$ tilings) and twist $0$;
the other two have size $180$ and twist $1$.
\item
For $N = 4$ the components are:
one of size $143065$ and twist $0$;
two of size $6412$ and twist $1$;
$56$ components of sizes $1$ or $2$ and twist $0$.
\item
For $N = 5$ the components are:
one of size $3386376$ and twist $0$;
two of size $202224$ and twist $1$;
two of size $2028$ and twist $0$.
\item
For $N = 6$ the components are:
one of size $80353593$ and twist $0$;
two of size $5987060$ and twist $1$;
two of size $98144$ and twist $0$;
$392$ components of sizes $1$, $2$ or $4$ and twist $1$.
\item
For $N = 30$ the approximate number of tilings of twist $0$ and $1$ are,
respectively, $1.05 \cdot 10^{41}$ and $0.736 \cdot 10^{41}$.
\item
For $N = 50$ the approximate number of tilings of twist $0$ and $1$ are,
respectively, $0.515 \cdot 10^{69}$ and $0.463 \cdot 10^{69}$.
\end{itemize}
As we shall prove in Lemma \ref{lemma:222}, the region $\cD$ is not regular.
This is consistent with the fact that there exist several large components
in the space of tilings of $\cR_N$ for large $N$.
\end{example}

\begin{example}
\label{example:223}
We now consider $\cD = [0,2]^2 \times [0,3]$
and tilings of $\cR_N = \cD \times [0,N]$.
\begin{itemize}
\item
For $N = 3$ the components are:
one of size $762572$ and twist $0$
($T_0$ in the notation of Corollary \ref{coro:twin});
one of size $99280$ and twist $1$ ($T_1$);
$16$ of size $16$ and twist $0$;
$2$ of size $2$ and twist $0$.
Up to the obvious identification between
$\cR_3 = [0,2]^2 \times [0,3]^2$ and $[0,3]^2 \times [0,2]^2$,
the tilings in Figures \ref{fig:3322a} and \ref{fig:3322b}
are tilings of $\cR_3$.
The second tiling in Figure \ref{fig:3322a} belongs to $T_0$.
% to the component of size $762572$.
The first and third tilings in Figure \ref{fig:3322a} both belong to $T_1$
% to the component of size $99280$
(see Remark \ref{remark:2233}).
Figure \ref{fig:3322b} shows two tilings in components of sizes $16$ and $2$.
\item
For $N = 4$ the components are:
one of size $106303993$ and twist $0$ ($T_0$);
one of size $20723112$ and twist $1$ ($T_1$);
$8$ of size $49$ and twist $0$;
$16$ of size $16$ and twist $1$;
$16$ of size $1$ and twist $1$.
\item
For $N = 30$ the approximate number of tilings of twist $0$ and $1$ are,
respectively, $0.117 \cdot 10^{65}$ and $0.108 \cdot 10^{65}$.
\end{itemize}

\begin{figure}[ht]
\begin{center}
\includegraphics[scale=0.275]{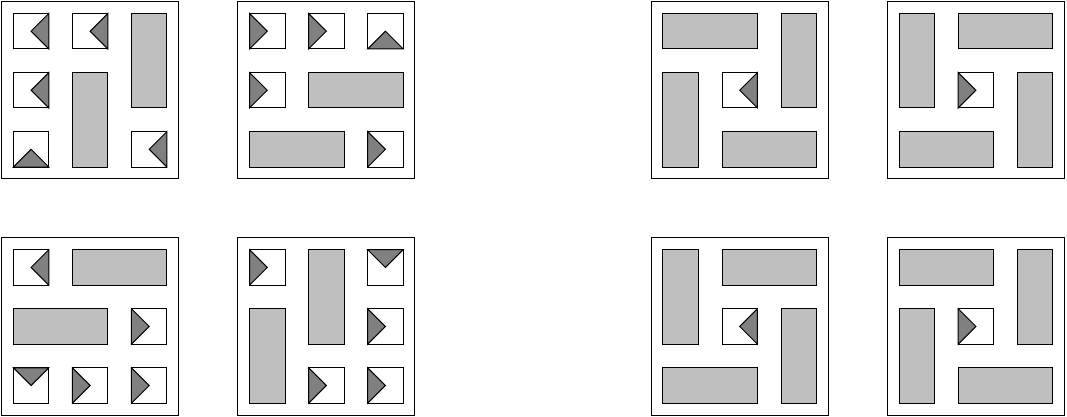}
\end{center}
\caption{Two tilings of the box
$\cR = [0,3]^2 \times [0,2]^2 \subset \RR^4$.}
\label{fig:3322b}
\end{figure}

As we shall prove in Lemma \ref{lemma:22L},
% Section \ref{section:small},
the box $\cD$ is regular.
This is consistent with the fact that there exist exactly two large components.
For large $N$, the two large components have approximately the same size.
\end{example}

\begin{remark}
\label{remark:source}
The programs used to verify these examples were written in C or
{{C\nolinebreak[4]\hspace{-.05em}\raisebox{.4ex}{\tiny\bf ++}}};
sources available on the home page of the authors \cite{4dominosource}.
Most of them were written by the authors;
some older programs were written by our collaborators
from previous publications, including J. Freire and P. Milet,
and by students, including B.~Pereira.
The cases $N \le 6$ in Example \ref{example:222}
and the cases $N \le 4$ in Example \ref{example:223}
were performed by brute force.
Tilings are encoded by strings of characters:
each character corresponds to a unit cube and indicates
the direction of the corresponding domino.
We first produced a list of all tilings in alphabetical order
and then computed the connected components.
The cases of larger $N$ are too large for a brute force approach.
We then use the theory
explained in \cite{probabledomino}
and in Sections \ref{section:twist} and \ref{section:plug} below.
In particular, in both examples we explicitly compute both
the adjacency matrix $A$ (as in Definition \ref{defin:adjacency})
and the matrix $\tilde A$ (as in Equation \eqref{equation:Delta}).
With the use of the arbitrary precision library gmp,
this allows us to obtain the exact number of tilings
with each value of the twist.
Computing the sizes of the connected components
appears to be significantly harder.
\end{remark}

%%%%%%%%%%%%%%%%%%%%%%%%%%%%%%%%%%%%%%%%%%%%%%%%%%%%%%%%%%%%%%%%%%%%%%%

\section{Twist}
\label{section:twist}

For the remainder of the paper, unless otherwise stated,
by a region $\cR$, we mean
a balanced cubiculated subset of $\RR^n$.

%Given a balanced cubiculated region $\cR \subset \RR^n$,
Given a region $\cR$ let $\cT(\cR)$ be the set of domino tilings of $\cR$.
Construct a simple bipartite graph $\cG_{\cR}$ as follows.
Vertices of $\cG_{\cR}$ are unit cubes in $\cR$
and two vertices of $\cG_{\cR}$ are joined by an edge if and only if
the two corresponding unit cubes share a face of codimension one.
We assume the vertices of $\cG_{\cR}$  belong to $\ZZ^n \subset \RR^n$
and the edges of $\cG_{\cR}$ are unit segments.
The color of a vertex $v = (x_1,\ldots,x_n) \in \ZZ^n$ is given by
$(-1)^{x_1+\cdots +x_n}$
(black is $+1$, white is $-1$).
Let $b \in \NN^\ast = \{1, 2, 3, \ldots \}$
be the number of black vertices of $\cG_{\cR}$;
recall that we assume that $\cR$ is balanced
so that $\cG_{\cR}$ also has $b$ white vertices.
Label the black vertices as $v_1, \ldots, v_b$ and
the white vertices as $w_1, \ldots, w_b$.
Tilings $\bt \in \cT(\cR)$ correspond to perfect matchings of $\cR$,
or, equivalently,
to bijections $\sigma_\bt: \{1,2,\ldots,b\} \to \{1,2,\ldots,b\}$
such that $v_i$ is adjacent to $w_{\sigma(i)}$ (for all $i$).

The adjacency matrix of $\cR$ is an indicator matrix recording if $v_i$ and $w_j$ are adjacent.
%$A_{ij} = 1$ if $v_i$ and $w_j$ are adjacent and 
%$A_{ij} = 0$ otherwise.
%%
%% \[
%% A = \left\{\begin{array}{lc}
%% 1, & \textrm{if } v_i \textrm{ and } w_j \textrm{ are adjacent } \\
%% 0, & \textrm{otherwise }
%% \end{array}\right.
%% \]
Tilings of $\cR$ naturally correspond to
nonzero terms in the expansion of the determinant of the adjacency matrix.
We imitate Kasteleyn's construction
(originally for dimension $2$ \cite{Kasteleyn})
to define a matrix $K \in \ZZ^{b \times b}$ with entries
$K_{ij} \in \{+1, -1\}$ if $v_i$ and $w_j$ are adjacent and 
$K_{ij} = 0$ otherwise.
%We now define the sign of $K_{ij}$ when $v_i$ and $w_j$ are adjacent.
As above, consider $v_i, w_j \in \ZZ^n$ so that
$v_i - w_j = \pm e_k$ for some $k \in \{1,2,\ldots,n\}$.
Write $v_i = (x_1,\ldots,x_n) \in \ZZ^n$ so that
$w_j = (x_1,\ldots,x_k\pm 1,\ldots,x_n)$ and set
\begin{equation}
\label{equation:K}
K_{ij} = (-1)^{x_1+\cdots+x_{k-1}}.
\end{equation}
As in the case of  the adjacency matrix,
tilings of $\cR$ naturally correspond to
nonzero terms in the expansion of $\det(K)$.
Given a tiling $\bt \in \cT(\cR)$,
define the signed permutation matrix $T_{\bt}$
by $(T_{\bt})_{i,\sigma_{\bt}(i)} = K_{i,\sigma_{\bt}(i)} \in \{+1,-1\}$
and $(T_{\bt})_{ij} = 0$ otherwise.

\begin{defin} The {\em twist} $\Tw(\bt) \in \ZZ/(2)$ is defined  by
\label{defin:twist}
\begin{equation}
\label{equation:twist}
\det(T_{\bt}) = (-1)^{\Tw(\bt)} =
\sign(\sigma_{\bt}) \prod_i K_{i,\sigma_{\bt}(i)}. 
\end{equation}
\end{defin}

\begin{defin}

The {\em defect} $\Delta(\cR)$ of a region $\cR$:
\begin{align}
\label{equation:tw}
\Delta(\cR) &= \det(K) = \sum_{\bt \in \cT(\cR)} (-1)^{\Tw(\bt)} \\
\notag
&=
|\{ \bt \in \cT(\cR) \;|\; \Tw(\bt) = 0 \}| -
|\{ \bt \in \cT(\cR) \;|\; \Tw(\bt) = 1 \}|.
\end{align}
Here $\sign(\sigma) = (-1)^{\inv(\sigma)}$ is the sign
of the permutation $\sigma$,
$\inv(\sigma) = |\Inv(\sigma)|$ is the number of inversions of $\sigma$
and $\Inv(\sigma)$ is the set of inversions of $\sigma$.
%The pair $(i_0,i_1) \in \{1,\ldots,b\}^2$ is an inversion of $\sigma$
%if and only if $i_0 < i_1$ and $\sigma(i_0) > \sigma(i_1)$.

\end{defin}

Therefore the determinant of our Kasteleyn matrix does not enumerate the total number of tilings.  Instead it detects the difference between the number of tilings with twist $0$ and twist $1$.

\begin{remark}
\label{remark:labelsign}
The definition of $\Tw$ depends on the labeling
of the vertices of $\cG_{\cR}$.
Changing labelings corresponds to permuting rows and columns of $K$
and therefore possibly changing the value of $\Tw(\bt) \in \ZZ/(2)$
for all tilings $\bt$.
\end{remark}

A {\em flip} is a local move:
remove two adjacent parallel dominoes and
place them back in the only other possible way.
A {\em trit} is another local move.
Consider a block formed by $8$ unit cubes,
of dimensions $2 \times 2 \times 2 \times 1 \times \cdots \times 1$:
if we remove from the block two opposite unit cubes,
we are left with the union of six unit cubes,
which can be tiled by dominoes in precisely two ways.
A trit consists in finding three dominoes in the configuration above,
removing them and placing them back in the only other possible way.
The trit is therefore a local move involving three dominoes.
All other local moves involving three (or fewer) dominoes
% which does not
reduce to a (very short) sequence of flips,
as can be verified case by case.
The trit does not reduce to a sequence of flips,
as shown in the next Theorem.

\begin{theo}
\label{lemma:fliptrit}
%Let $\cR \subset \RR^n$ be a balanced cubiculated region.
Let $\bt_0$ and $\bt_1$ be tilings of a region $\cR$.
If $\bt_0$ and $\bt_1$ differ by a flip then $\Tw(\bt_1) = \Tw(\bt_0)$.
If $\bt_0$ and $\bt_1$ differ by a trit then $\Tw(\bt_1) = 1-\Tw(\bt_0)$.
\end{theo}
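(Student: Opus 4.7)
By Definition~\ref{defin:twist}, $(-1)^{\Tw(\bt)} = \sign(\sigma_\bt) \prod_i K_{i,\sigma_\bt(i)}$, so to track how $\Tw$ changes it suffices to track how $\sign(\sigma_\bt)$ and the product of Kasteleyn entries change. Both a flip and a trit are local moves and therefore modify $\sigma_\bt$ at only two or three indices respectively, with the product of $K$-entries affected only at those positions. The plan is to show that for a flip both factors reverse sign (and so cancel), while for a trit $\sign(\sigma_\bt)$ is preserved but the product of $K$-entries reverses sign.

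For a flip, by permuting coordinates I may assume the affected $2 \times 2$ square lies in the $e_k, e_l$-plane with $k < l$ and has a black corner $v_a = (a_1, \ldots, a_n)$. The flip swaps the two white partners of the two black cubes, so $\sigma_\bt$ changes by a transposition and $\sign(\sigma_\bt)$ flips. Applying formula (\ref{equation:K}), both $K$-entries before the flip (dominoes in direction $e_k$) equal $(-1)^{a_1 + \cdots + a_{k-1}}$ and so multiply to $+1$; after the flip (dominoes in direction $e_l$) one entry equals $(-1)^{a_1 + \cdots + a_{l-1}}$ and the other equals $-(-1)^{a_1 + \cdots + a_{l-1}}$, because the $k$-th coordinate of one of the two cubes is larger by $1$ and $k \le l - 1$, so these two $K$-entries multiply to $-1$. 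The two sign reversals cancel and $\Tw(\bt)$ is preserved.

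For a trit, the affected block is $2 \times 2 \times 2$ in some coordinate directions $e_i, e_j, e_k$ ($i < j < k$), with two opposite corners removed. Since opposite corners of the block have opposite colors, one cube of each color is removed, and the induced bipartite adjacency graph on the six remaining cubes is a $6$-cycle. A hexagon admits exactly two perfect matchings, related by cyclically rotating each colored triple of vertices by one step. Consequently $\sigma_\bt$ is modified by a $3$-cycle on the three black indices involved, and $\sign(\sigma_\bt)$ is unchanged. An explicit computation of the three Kasteleyn entries in each of the two matchings using (\ref{equation:K}) with the chosen $i, j, k$ shows that the two products differ by exactly a factor of $-1$. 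Hence $(-1)^{\Tw(\bt)}$ reverses and $\Tw(\bt_1) = 1 - \Tw(\bt_0)$.

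The main obstacle is the bookkeeping: one should verify that these sign computations are independent of the labeling of the vertices (consistent with Remark~\ref{remark:labelsign}), of the absolute position $(a_1, \ldots, a_n)$ of the affected block, of the choice of coordinate directions $k, l$ or $i, j, k$, and, for the trit, of which pair of opposite corners is removed. These invariances follow from the symmetry of (\ref{equation:K}) under coordinate permutations and from the fact that any factor depending only on the block's ambient position appears in both matchings and hence cancels in the ratio that determines the change in $\Tw$.
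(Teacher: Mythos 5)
Your proposal is correct and follows essentially the same approach as the paper: track the parity change of $\sigma_\bt$ (transposition for a flip, $3$-cycle for a trit) and the sign change in the product of Kasteleyn entries directly from formula~(\ref{equation:K}), observing that the two cancel for a flip and produce a net factor of $-1$ for a trit. The only difference is that you assert the trit sign computation rather than carrying it out explicitly; the paper does the three pairwise comparisons in full, but your plan and the underlying arithmetic match.
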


\begin{proof}
A flip is always contained in a plane, an affine subspace of dimension $2$.
Assume the two relevant dimensions are  $k_0$ and $k_1$
with $1 \le k_0  < k_1 \le n$.
Among the four vertices involved, let $v = (x_1, \ldots, x_n)$
be the one with smallest coordinates
so that the other three vertices are
$v + e_{k_0}$, $v + e_{k_1}$ and $v + e_{k_0} + e_{k_1}$.
Without loss of generality, suppose
$\bt_0$ contains the dominoes
$(v,v+e_{k_0})$ and $(v+e_{k_1},v+e_{k_0}+e_{k_1})$ and 
$\bt_1$ contains the dominoes
$(v,v+e_{k_1})$ and $(v+e_{k_0},v+e_{k_0}+e_{k_1})$.

Each tiling defines a bijection from the set of black vertices
to the set of white vertices.
The two bijections corresponding to $\bt_0$ and $\bt_1$
differ by a transposition (on either side)
and therefore have opposite parities.
Also, the matrix $K$ assigns signs to edges.
The signs assigned to 
$(v,v+e_{k_0})$ and $(v+e_{k_1},v+e_{k_0}+e_{k_1})$ both equal
$(-1)^{x_1+\cdots+x_{k_0-1}}$ and are therefore equal.
The signs assigned to 
$(v,v+e_{k_1})$ and $(v+e_{k_0},v+e_{k_0}+e_{k_1})$ are
$(-1)^{x_1+\cdots+x_{k_0}+ \cdots +x_{k_1-1}}$
and
$(-1)^{x_1+\cdots+(x_{k_0}+1)+ \cdots +x_{k_1-1}}$
and are therefore different.
We thus have 
$\det(T_{\bt_1}) = \det(T_{\bt_0})$
and therefore
$\Tw(\bt_1) = \Tw(\bt_0)$, proving the first claim.

A trit is always contained in an affine subspace of dimension $3$.
Assume the relevant dimensions to be $k_0, k_1, k_2$
with $1 \le k_0  < k_1 < k_2 \le n$.
Assume that 
the three dominoes
$(v+e_{k_0},v+e_{k_0}+e_{k_1})$,
$(v+e_{k_1},v+e_{k_1}+e_{k_2})$ and
$(v+e_{k_2},v+e_{k_2}+e_{k_0})$
are contained in $\bt_0$ and that
the three dominoes
$(v+e_{k_0},v+e_{k_0}+e_{k_2})$,
$(v+e_{k_1},v+e_{k_1}+e_{k_0})$ and
$(v+e_{k_2},v+e_{k_2}+e_{k_1})$
are contained in $\bt_1$.
As above, write $v = (x_1,\ldots,x_n)$.

The bijections corresponding to $\bt_0$ and $\bt_1$ now differ
by a $3$-cycle and therefore have the same parity.
The signs assigned by $K$ to
$(v+e_{k_2},v+e_{k_2}+e_{k_0})$ and
$(v+e_{k_1},v+e_{k_1}+e_{k_0})$ are both
$(-1)^{x_1+\cdots+x_{k_0-1}}$ and therefore equal.
The signs assigned to
$(v+e_{k_2},v+e_{k_2}+e_{k_1})$ and
$(v+e_{k_0},v+e_{k_0}+e_{k_1})$
are $(-1)^{x_1+\cdots+x_{k_0}+\cdots+x_{k_1-1}}$
and $(-1)^{x_1+\cdots+(x_{k_0}+1)+\cdots+x_{k_1-1}}$,
respectively, and therefore different.
Finally, the signs assigned to
$(v+e_{k_1},v+e_{k_1}+e_{k_2})$ and
$(v+e_{k_0},v+e_{k_0}+e_{k_2})$ are
\[ (-1)^{x_1+\cdots+x_{k_0}+\cdots+(x_{k_1}+1)+\cdots+x_{k_2-1}}, \quad
(-1)^{x_1+\cdots+(x_{k_0}+1)+\cdots+x_{k_1}+\cdots+x_{k_2-1}}, \]
respectively, and therefore equal.
We thus have 
$\det(T_{\bt_1}) = -\det(T_{\bt_0})$
and therefore
$\Tw(\bt_1) = 1-\Tw(\bt_0)$, proving the second claim.
\end{proof}

%% The construction of the matrix $K$ above may seem artificial
%% and to invite the question whether there are different tricks
%% which will define similar but different invariants under flips.
%% We shall see (in many ways) that this is not the case:
%% there are no other invariants comparable to the twist.
%% The following definition may be more natural.

Before moving on,  we show the naturality of the matrix $K$.

Consider a region $\cR \subset \RR^n$ and
its graph $\cG_{\cR}$.
A {\em Kasteleyn system} for $\cR$ assigns to each edge of $\cG_{\cR}$
a coefficient $+1$ or $-1$ satisfying the following condition:
if four edges form a square then the product of their coefficients is $-1$.
%Given a labeling of black and white vertices
%we have the adjacency matrix $A$ of $\cG_{\cR}$.
Given a Kasteleyn system we also have a {\em Kasteleyn matrix} for $\cR$,
a matrix $\tilde K \in \ZZ^{b \times b}$:
if $v_i$ and $w_j$ are adjacent then $\tilde K_{ij}$ 
is the coefficient of the edge $v_iw_j$
(and the coefficients form a Kasteleyn system).
Thus, for all $i,j$, $\tilde K_{ij} \in \{+1,-1\}$ if and only if
$v_i$ and $w_j$ are adjacent.
Also, for all $i_0, i_1, j_0, j_1$, we have
$\tilde K_{i_0j_0} \tilde K_{i_0j_1} \tilde K_{i_1j_0} \tilde K_{i_1j_1}
\in \{0,-1\}$.

The matrix $K$ is an example of a Kasteleyn matrix.
The following lemma shows that if $\cR$ is connected and simply connected
then any other Kasteleyn matrices are only minor variations.

\begin{lemma}
\label{lemma:kasteleyn}
Consider a region $\cR \subset \RR^n$.
Assume furthermore that $\cR$ is connected and simply connected.
% and that the homology group $H_2(\cR;\ZZ)$ is trivial.
Then $\tilde K$ is a Kasteleyn matrix if and only if there
exist diagonal matrices $D_{\black}, D_{\white}$
with diagonal entries equal to $\pm 1$
and $\tilde K = D_{\black} K D_{\white}$.
\end{lemma}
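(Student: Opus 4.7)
The easy direction ($\Leftarrow$) is a direct verification. If $\tilde K = D_\black K D_\white$ and $v_{i_0}w_{j_0}, v_{i_0}w_{j_1}, v_{i_1}w_{j_0}, v_{i_1}w_{j_1}$ are the four edges of a square in $\cG_\cR$, then in the product $\tilde K_{i_0 j_0} \tilde K_{i_0 j_1} \tilde K_{i_1 j_0} \tilde K_{i_1 j_1}$ each diagonal factor $(D_\black)_{i_\alpha i_\alpha}$ and $(D_\white)_{j_\beta j_\beta}$ appears exactly twice and so squares to $1$. Hence this product equals $K_{i_0 j_0} K_{i_0 j_1} K_{i_1 j_0} K_{i_1 j_1} = -1$, and clearly $\tilde K_{ij} = 0$ iff $K_{ij} = 0$, so $\tilde K$ is Kasteleyn.

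For the converse, given a Kasteleyn matrix $\tilde K$, define $\epsilon : E(\cG_\cR) \to \{\pm 1\}$ on edges by $\epsilon(v_iw_j) = \tilde K_{ij}\, K_{ij}$. Both Kasteleyn sign conditions give $\prod_{e\in\square} \epsilon(e) = (-1)\cdot(-1) = +1$ around any elementary square. The plan is to produce $h:V(\cG_\cR)\to \{\pm1\}$ with $\epsilon(vw) = h(v)h(w)$ for every edge; setting $(D_\black)_{ii} = h(v_i)$ and $(D_\white)_{jj} = h(w_j)$ then gives $\tilde K = D_\black K D_\white$ directly. Since $\cR$ connected forces $\cG_\cR$ connected, after fixing a basepoint $v_\ast$ with $h(v_\ast)=1$ and propagating via $h(v) = \prod_{e\in\gamma}\epsilon(e)$ along any path $\gamma$ from $v_\ast$ to $v$, the existence of $h$ reduces to showing $\prod_{e\in\gamma}\epsilon(e) = +1$ for every closed walk $\gamma$ in $\cG_\cR$.

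To verify this cycle condition I will invoke the simple-connectedness of $\cR$. Form the $2$-complex $X$ obtained from $\cG_\cR$ by attaching a $2$-cell along each elementary square, and construct a continuous map $\Phi:X\to \cR$ sending each vertex to the center of its unit cube, each edge to a short arc crossing the shared codimension-$1$ face, and each $2$-cell to a small disk crossing the shared codimension-$2$ face. A closed walk $\gamma$ in $\cG_\cR \subset X$ maps to a loop in $\cR$ that bounds a singular disk $D^2\to\cR$ by simple-connectedness; after perturbing this disk to be transverse to the codimension-$3$ skeleton of the cubiculation I can push it into the image of $\Phi$, exhibiting $\gamma$ as a product of elementary squares in $X$ and hence showing that $\epsilon$ has product $+1$ along $\gamma$.

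The main obstacle is the last topological step. It is of a standard transversality type: the forbidden set (the codimension-$3$ skeleton) has codimension strictly greater than the dimension $2$ of the contracting disk, so a generic perturbation suffices, and the local model of $X$ near a codimension-$3$ face is the boundary $\partial [0,1]^3$, a $2$-sphere which is simply connected. Care is needed to handle boundary behavior (codimension-$1$ and codimension-$2$ faces of $\cR$ on $\partial\cR$ contribute no edges or $2$-cells of $X$), but this is dealt with by first pushing the bounding disk slightly into $\interior(\cR)$, which has the same fundamental group as $\cR$.
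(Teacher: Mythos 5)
Your proof is correct and has the same logical skeleton as the paper's: view $\epsilon_{ij}=\tilde K_{ij}K_{ij}$ as a $\ZZ/2$-valued $1$-cochain on $\cG_\cR$, observe that the Kasteleyn condition forces it to be closed (trivial on every elementary square), and conclude from simple-connectedness that it is a coboundary $h(v)h(w)$, which is exactly the assertion that $\tilde K=D_\black K D_\white$. Where you diverge is in how the topological input is packaged. The paper is terse: it identifies the edges and squares of $\cG_\cR$ directly with (dual) $1$- and $2$-cochains of $\cR$, and then simply invokes $H^1(\cR;\ZZ/2)=0$ via the universal coefficient theorem to pass from closed to exact. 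You instead unpack that step: you build the $2$-complex $X$ by attaching $2$-cells along elementary squares, map it into $\cR$, and try to prove $\pi_1(X)=1$ directly by contracting a loop to a disk in $\cR$ and using transversality to push the disk onto the dual $2$-skeleton. This is morally the same fact the paper is using (that the dual $2$-complex of a simply connected cubiculated region is simply connected), argued from scratch. Your version is more elementary and self-contained, at the cost of the transversality step near $\partial\cR$, which needs more care than the sketch admits: dual $1$-cells for boundary facets and dual $2$-cells for boundary codimension-$2$ faces are missing from $X$, so the retraction of ``$\interior\cR$ minus the codimension-$3$ skeleton'' onto the full dual $2$-skeleton of $\RR^n$ does not land in $\Phi(X)$ near $\partial\cR$. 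One must first homotope the contracting disk away from a collar of $\partial\cR$ (or work with a thickened subcomplex and argue the disk never needs to go within distance $1$ of the boundary). This is fixable and standard, but as written it is the weakest link; the paper sidesteps it by quoting $H^1(\cR;\ZZ/2)=0$ for the cubical complex itself and tacitly transporting the answer to the dual complex.
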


\begin{proof}
It is straightforward to verify that if $D_{\black}, D_{\white}$
are diagonal matrices
with diagonal entries equal to $\pm 1$ then
$D_{\black} K D_{\white}$ is indeed a Kasteleyn matrix.
In order to prove the converse we use the language of homology.
Consider a Kasteleyn matrix $\tilde K$ and its corresponding Kasteleyn system.
A Kasteleyn system defines an element of $\alpha \in C^1(\cR;\ZZ/(2))$:
if $e$ is an edge then the coefficient of $e$ 
in the Kasteleyn system is $(-1)^{\alpha(e)}$,
$\alpha(e) \in \ZZ/(2)$.
(Here $C^1(\cR;\ZZ/(2))$ is the first cochain group
of the cell complex $\cR$ with coefficients in $\ZZ/(2)$.)
Let $\alpha, \tilde\alpha \in C^1(\cR;\ZZ/(2))$ correspond to 
the original $K$ and to $\tilde K$, respectively.
By definition, if $s$ is an oriented square then
$\alpha(\partial s) = \tilde\alpha(\partial s) = 1 \in \ZZ/(2)$.
(Here $\partial: C_2 \to C_1$ is the boundary map.)
Thus $(\alpha - \tilde\alpha)(\partial s) = 0$ for all $s$
and $\alpha - \tilde\alpha \in Z^1$ (i.e., it is closed).
Since $\cR$ is simply connected we have
from the universal coefficient theorem that $H^1(\cR;\ZZ/(2)) = 0$:
it follows that $\alpha - \tilde\alpha \in B^1$ (i.e., it is exact).
(Here $B^1 \subseteq C^1$ is the image of
the coboundary map $\partial^\ast: C^0 \to C^1$.)
In other words, there exists $\delta \in C^0(\cR;\ZZ/(2))$ with
$\alpha - \tilde\alpha = \partial^\ast\delta$.
% In other words,
For any edge $e = vw$ we have
$\alpha(e) - \tilde\alpha(e) = \delta(w) - \delta(v)$.
Thus, $\delta$ gives us the desired
diagonal matrices $D_{\black}$ and $D_{\white}$.
\end{proof}

\begin{coro}
Consider a connected and simply connected region
$\cR \subset \RR^n$, $n \ge 3$.
Consider a fixed Kasteleyn matrix $\tilde K$ for $\cR$.
For a tiling $\bt$ of $\cR$, construct a signed permutation matrix
$\tilde T = T_{\bt,\tilde K}$ with nonzero entries 
$\tilde T_{ij} = \tilde K_{ij}$ when $v_i$ and $w_j$ form a domino of $\bt$.
Then there exists $\varepsilon \in \{+1,-1\}$ such that
for all $\bt$ we have $\det(T_{\bt,\tilde K}) = \varepsilon\Tw(\bt)$.
\end{coro}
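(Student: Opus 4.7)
The plan is to deduce the corollary directly from Lemma~\ref{lemma:kasteleyn}, which is already the substantive structural statement. Since $\cR$ is connected and simply connected, that lemma furnishes diagonal sign matrices $D_{\black}, D_{\white}$, depending on $\tilde K$ (and on the fixed $K$) but \emph{not} on any tiling, such that $\tilde K = D_{\black} K D_{\white}$.

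The second step is to observe that conjugation by these diagonal matrices commutes with the construction $\bt \mapsto T_{\bt,\cdot}$. Indeed, for any tiling $\bt$, the matrices $T_{\bt}$ and $T_{\bt,\tilde K}$ have identical support (the pairs $(i,j)$ such that $v_i,w_j$ form a domino of $\bt$), and on this support the entries of $T_{\bt,\tilde K}$ equal $\tilde K_{ij} = (D_{\black})_{ii} K_{ij} (D_{\white})_{jj} = (D_{\black})_{ii}(T_{\bt})_{ij}(D_{\white})_{jj}$. Hence $T_{\bt,\tilde K} = D_{\black} T_{\bt} D_{\white}$ as matrices. Taking determinants and invoking Definition~\ref{defin:twist},
\begin{equation*}
\det(T_{\bt,\tilde K}) \;=\; \det(D_{\black})\det(D_{\white})\det(T_{\bt}) \;=\; \varepsilon\cdot(-1)^{\Tw(\bt)},
\end{equation*}
where $\varepsilon := \det(D_{\black})\det(D_{\white}) \in \{+1,-1\}$ is manifestly independent of $\bt$. (I read the corollary's shorthand ``$\varepsilon\Tw(\bt)$'' as $\varepsilon\cdot(-1)^{\Tw(\bt)}$, consistent with~\eqref{equation:twist}.)

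There is no real obstacle here: all the work lies in Lemma~\ref{lemma:kasteleyn}, whose use of simple connectedness and the universal coefficient theorem is what forces $\tilde K$ to differ from $K$ only by the trivial (diagonal, row/column rescaling) ambiguity. The corollary is essentially the remark that this ambiguity propagates through $\det(T_{\bt,\cdot})$ as a single global sign, so that the twist is a well defined invariant of $\bt$ (modulo a global choice for $\cR$) regardless of which Kasteleyn matrix is used to compute it.
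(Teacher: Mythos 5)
Your proof is correct and follows essentially the same route as the paper: apply Lemma~\ref{lemma:kasteleyn} to write $\tilde K = D_{\black} K D_{\white}$, note that the same identity holds for the signed permutation matrices $T_{\bt,\tilde K} = D_{\black} T_{\bt,K} D_{\white}$, and take determinants with $\varepsilon = \det(D_{\black} D_{\white})$. Your parenthetical observation that the statement's ``$\varepsilon\Tw(\bt)$'' should be read as $\varepsilon\cdot(-1)^{\Tw(\bt)}$ (so as to be consistent with Definition~\ref{defin:twist}, where the twist lives in $\ZZ/(2)$ and the determinant in $\{\pm 1\}$) is also accurate.
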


\begin{proof}
By construction, $\Tw(\bt) = \det(T_{\bt,K})$ 
for the original Kasteleyn matrix $K$.
From Lemma \ref{lemma:kasteleyn}, there exist diagonal matrices
$D_{\black}$ and $D_{\white}$ with $\tilde K = D_{\black}KD_{\white}$.
By construction we also have
$T_{\bt,\tilde K}  = D_{\black}T_{\bt,K} D_{\white}$.
Take  $\varepsilon = \det(D_{\black}D_{\white})$:
we have $\det(T_{\bt,\tilde K}) = \varepsilon \det(T_{\bt,K})$
for all $\bt$, as desired.
\end{proof}

% We now give a different definition of the twist,
% more similar to the definition for the case $n = 3$
% (see \cite{segundoartigo}, \cite{FKMS}).
% The definition works for regions of the form $\cR_N = \cD \times [0,N]$,
% $\cD \subset \RR^{n-1}$.
% Two disjoint horizontal dominoes $d_0, d_1 \subset \cR_N$,
% $d_i = \tilde d_i \times [k_i-1,k_i]$,
% are {\em reverse} if $k_0 \ne k_1$ and
% $\tilde d_0, \tilde d_1 \subset \cD$
% intersect in exactly one unit cube and have different directions.
% [TO BE EITHER COMPLETED OR DISCARDED]

In dimension $n = 3$, the twist $\Tw(\bt)$ is defined
to be an {\em integer}
(see \cite{segundoartigo}, \cite{FKMS}).
In order to avoid confusion, we temporarily write, for $n = 3$,
$\Tw_{\ZZ}$ for the twist as defined in the other references
and $\Tw_{\ZZ/(2)}$ for the twist as defined here.
The following lemma clarifies the relationship
between the two concepts.

\begin{lemma}
\label{lemma:ZZ2}
Let $\cD \subset \RR^2$ be a balanced quadriculated disk.
Let $\cR_N = \cD \times [0,N] \subset \RR^3$.
For any tiling $\bt$ of $\cR_N$ we have
$\Tw_{\ZZ/(2)}(\bt) = (\Tw_{\ZZ}(\bt) \bmod 2)$.
\end{lemma}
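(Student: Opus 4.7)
The plan is to prove the identity by showing that both sides are $\ZZ/(2)$-valued invariants on $\cT(\cR_N)$ with the same behavior under local moves, and then checking agreement on a single convenient tiling. Consider the auxiliary function
\[
f(\bt) \;:=\; \Tw_{\ZZ/(2)}(\bt) \;-\; (\Tw_{\ZZ}(\bt) \bmod 2) \;\in\; \ZZ/(2).
\]
Theorem \ref{lemma:fliptrit} shows that $\Tw_{\ZZ/(2)}$ is invariant under flips and changes by $1$ under trits. The analogous behavior of $\Tw_{\ZZ}$ — invariance under flips and change by $\pm 1$ under trits — is established in \cite{segundoartigo, FKMS}. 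Reducing modulo $2$ the two invariants transform identically, so $f$ is preserved by both flips and trits.

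Next I would invoke the connectivity of $\cT(\cR_N)$ under the equivalence relation generated by flips \emph{and} trits, for cylinders $\cR_N = \cD \times [0,N]$ over a balanced quadriculated disk. This is a consequence of the results of \cite{FKMS, regulardisk}, which identify $\Tw_{\ZZ}$ as the complete obstruction to flip-equivalence (after allowing extra vertical space) together with the fact that a trit changes $\Tw_{\ZZ}$ by $\pm 1$. Granted connectivity, $f$ is constant on $\cT(\cR_N)$, so the lemma reduces to verifying $f(\bt_\ast) = 0$ on a single tiling $\bt_\ast$.

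For the verification I would take $N$ even and $\bt_\ast = \bt_{\nvert,N}$ the vertical tiling. By the normalization used in \cite{segundoartigo, FKMS} one has $\Tw_{\ZZ}(\bt_{\nvert,N}) = 0$. For the other side, the permutation $\sigma_{\bt_\ast}$ is a product of disjoint transpositions, one for each vertical pair, and the product of the corresponding entries $K_{i,\sigma(i)}$ given by \eqref{equation:K} depends only on $(x_1,x_2)$ of each column of $\cD$. A direct column-by-column tally shows $\det(T_{\bt_\ast}) = +1$, hence $\Tw_{\ZZ/(2)}(\bt_\ast) = 0$. For $N$ odd, I would appeal to additivity of both flavors of twist under $\ast$-concatenation (valid because $\cD$ is a disk and the Kasteleyn matrix of the concatenation is block-triangular up to signs) to reduce to the even case, or directly verify using an analogous reference with a single horizontal slab on top of a vertical block.

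The main obstacle is the sign bookkeeping in the third step: although the combinatorics of $\bt_{\nvert,N}$ is transparent, the $\sign(\sigma_{\bt_\ast})$ contribution from all the two-cycles must be reconciled with the product of the entries of $K$, and one must check that the labeling convention chosen for $\cG_{\cR_N}$ (cf.\ Remark \ref{remark:labelsign}) is compatible with the convention implicit in the definition of $\Tw_{\ZZ}$ used in \cite{segundoartigo, FKMS}. Any mismatch of convention only affects an overall global sign, which can be absorbed by replacing the reference tiling if necessary; once this compatibility is settled, the rest of the argument is formal.
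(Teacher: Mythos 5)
Your plan is a ``transport'' argument: show that $f(\bt) = \Tw_{\ZZ/(2)}(\bt) - (\Tw_{\ZZ}(\bt)\bmod 2)$ is preserved by flips and trits, invoke connectivity under those moves, and check one base case. This is a genuinely different route from the paper, which reduces the identity directly to the Kasteleyn-system results of \cite{saldanha2002} (via the cycle $\bt_1 - \bt_0$ and Seifert-surface or refinement arguments from \cite{FKMS}) without any connectivity input. Unfortunately, your second step has a real gap. The connectivity you need --- that any two tilings of $\cR_N$ become joined by flips and trits once enough vertical floors are appended --- is precisely the assertion that $\Tw_{\ZZ}$ is a \emph{complete} invariant up to $\sim$, i.e.\ that $\cD$ is regular. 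As the very papers you cite emphasize (and as the Introduction here records), this is false for some disks: ``if $\Tw(\bt_0)=\Tw(\bt_1)$, we usually (but not always) have $\bt_0\approx\bt_1$.'' For a non-regular $\cD$, the domino group is strictly larger than $\ZZ\oplus\ZZ/(2)$, trits generate only a proper subgroup, and $f$ need not be constant by this argument. The lemma, however, makes no regularity hypothesis, so your proof does not cover the general case.

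There is also a softer but nontrivial problem with the base case. The value of $\Tw_{\ZZ/(2)}(\bt_{\nvert,N})$ genuinely depends on the vertex labeling (this is exactly the content of Remark \ref{remark:labelsign}): permuting the labels of black or of white cubes shifts $\Tw_{\ZZ/(2)}$ by a constant on \emph{all} tilings simultaneously. You acknowledge this, but the suggested fix --- ``absorb the mismatch by replacing the reference tiling'' --- would change the statement being proved, not repair it. With the paper's explicit $K$ from Equation~\eqref{equation:K}, the product of $K$-entries for the vertical tiling contributes $(-1)^{b\cdot(N/2)}$ where $b$ is the number of black squares in $\cD$, and the parity of $\sign(\sigma_{\bt_{\nvert,N}})$ also depends on how floors are interleaved in the labeling; your claim that the total is $+1$ is not a ``direct tally'' but a computation that must be pinned to the specific labeling used for $\cR_N$ (the floor-by-floor labeling of Lemma \ref{lemma:Delta}) and cross-checked against the normalization of $\Tw_{\ZZ}$ in \cite{FKMS, segundoartigo}. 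Without that, the final equality $f(\bt_\ast)=0$ is not established even in the regular case.
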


The proof below relies heavily on notation,
definitions and results from \cite{saldanha2002} and \cite{FKMS}.
We feel that providing a more self-contained exposition
would imply too much repetition.
% (but see also Remark \ref{remark:twist3}).

\begin{proof}
Here, $\Tw_{\ZZ}$ is given by Definition 7.7 from \cite{FKMS}.
Since $\cR_N$ is contractible the flux is $0$ and $m = 0$.
Given two tilings $\bt_0$ and $\bt_1$ which differ by a cycle,
Definition 7.2 gives us
$\Tw_{\ZZ}(\bt_1) - \Tw_{\ZZ}(\bt_0) = \phi(t_1;t_1-t_0)$.
It thus suffices to check that
$\Tw_{\ZZ/(2)}(\bt_1) - \Tw_{\ZZ/(2)}(\bt_0) = (\phi(t_1;t_1-t_0) \bmod 2)$.
If there exists a Seifert surface for the cycle $t_1-t_0$
then this follows from Kasteleyn systems,
as discussed in \cite{saldanha2002}.
More generally, we may take refinements, as in \cite{FKMS}.
\end{proof}

%%%%%%%%%%%%%%%%%%%%%%%%%%%%%%%%%%%%%%%%%%%%%%%%%%%%%%%%%%%%%%%%%%%%%%%

\section{Plugs and floors}
\label{section:plug}

For the remainder of the paper, all regions $\cD \subset \RR^{n-1}$
are assumed to be balanced, cubiculated and contractible.  

Consider a region $\cD \subset \RR^{n-1}$:
we are interested in tilings of $\cR_N = \cD \times [0,N]$.
We imitate some of the constructions 
from \cite{regulardisk}, where the case $n = 3$ is discussed.

A domino $d$ (of dimension $n$) contained in $\cR_N$ is {\em horizontal} 
if it is of the form $\tilde d \times [k-1,k]$
where $\tilde d \subset \cD$ is a domino (of dimension $n-1$).
A domino $d \subset \cR_N$ is {\em vertical} otherwise, i.e.,
if it is of the from $s \times [k-1,k+1]$
where $s \subset \cD$ is a unit cube.
A {\em plug} is a balanced set of unit cubes contained in $\cD$
(or balanced set of vertices in $\cG_{\cD}$).
This includes the empty plug $\emptyplug = \emptyset$ and
its complement $\fullplug = \cD$.
Let $\cP = \cP_{\cD}$ be the set of all plugs.
Two plugs $p_0, p_1 \in \cP$ are {\em disjoint}
if and only if $p_0 \cap p_1 = \emptyplug$.
A {\em floor} is a triple $(p_0,f,p_1)$ where
$p_0, p_1 \in \cP$ are disjoint plugs and $f$ is a domino tiling
of $\cD_{p_0,p_1} = \cD \smallsetminus (p_0 \cup p_1)$.
A tiling of $\cR_N$ can be identified with a alternating sequence
of plugs and floors:
\begin{equation}
\label{equation:plugsandfloors}
\bt = (p_0 = \emptyplug, \bff_1, p_1, \ldots,
p_{N-1}, \bff_N, p_N = \emptyplug).
\end{equation}
Here $p_k$ is the set of unit cubes $s \subset \cD$
such that the vertical domino $s \times [k-1,k+1]$
is contained in $\bt$.
Also, $\bff_k = (p_{k-1}, f_k, p_k)$ where $f_k$ consists
of dominoes $\tilde d \subset \cD$ such that
the horizontal domino $d \times [k-1,k]$ is contained in $\bt$.

The \emph{domino complex} $\cC_{\cD}$ is
a $2$-complex associated to $\cD$.
We first construct a graph $\cC_{1,\cD}$
which is essentially the $1$-skeleton of $\cC_{\cD}$;
the complex itself will be constructed in Section \ref{section:complex}.
The set of vertices of $\cC_{1,\cD}$ is the set of plugs $\cP$.
If $p_0, p_1 \in \cP$ are not disjoint there is no edge joining them. 
If $p_0$ and $p_1$ are disjoint there is one edge for every tiling
of $\cD_{p_0,p_1}$.
Thus, a floor $\bff_1 = (p_0,f_1,p_1)$ is identified with
an edge joining $p_0$ and $p_1$.  
Each tiling of $\cD = \cD_{\emptyplug,\emptyplug}$
yields a loop based on the vertex $\emptyplug$;
these are the only loops in $\cC_{1,\cD}$.

Each tiling $\bt$ of $\cR_N$ is identified with a closed walk
of length $N$ in $\cC_{1,\cD}$ from $\emptyplug$ to itself.
More generally, consider the \emph{cork}
\[ \cR_{0,N;p_0,p_N} = \cR_N \smallsetminus \left(
(p_0 \times [0,1]) \cup (p_N \times [N-1,N]) \right). \]
Each tiling of $\cR_{0,N;p_0,p_N}$ is identified
with a walk in $\cC_{\cD}$, of length $N$,
starting at $p_0$ and ending at $p_N$.
In order to count tilings, we   construct 
the adjacency matrix $A$ of $\cC_{1,\cD}$.

\begin{defin}
\label{defin:adjacency}
The \emph{adjacency matrix} $A
\in \ZZ^{\cP\times\cP}$ of $\cC_{1,\cD}$ is the matrix given by:
\[
A_{p_0,p_1} =
\begin{cases}
|\cT(\cD_{p_0,p_1})|, & p_0 \cap p_1 = \emptyplug, \\
0, & p_0 \cap p_1 \ne \emptyplug.
\end{cases}
% \left\{\begin{array}{lc}
% |\cT(\cD_{p_0,p_1}|, & \textrm{if } p_0 \textrm{ and } p_1 \textrm{ are disjoint } \\
% 0, & \textrm{otherwise }
% \end{array}\right.
\]
\end{defin}
%

%If $p_0$ and $p_1$ are disjoint, $A_{p_0,p_1}$ is
%the number of tilings of $\cD_{p_0,p_1}$;
%if $p_0$ and $p_1$ are not disjoint,  $A_{p_0,p_1} = 0$.
Thus, $(A^N)_{p_0,p_N}$ is the number of tilings of $\cR_{0,N;p_0,p_N}$.
In particular, $$|\cT(\cR_N)| = (A^N)_{\emptyplug,\emptyplug}.$$

In order to compute the defect $\Delta(\cR_N)$
we first define $\tw_{p_0,p_1}(\bt) \in \ZZ/(2)$
for a tiling $\bt$ of $\cD_{p_0,p_1}$.

%% The next lemma describes a similar construction
%% that computes $\Delta(\cR_N)$, as defined in Equation \ref{equation:tw}.

%% We now define $\tw_{p_0,p_1}(\bt) \in \ZZ/(2)$;
%% it has several properties of the twist.
%% We return to the proof of Lemma \ref{lemma:Delta}
%% after the definition is completed in Equation \ref{equation:xtw}
%% and a few simple properties have been checked.

Label the unit cubes of $\cD$:
the black cubes are $v_1, \ldots, v_b$;
the white cubes are $w_1, \ldots, w_b$.
Construct a Kasteleyn matrix $K$ for $\cD$ as above.
The plug $p_i \in \cP$ consists of
$b_i$ black unit cubes and $b_i$ white unit cubes,
thus defining two subsets
$P_{i,\black}, P_{i,\white} \subseteq \{1,\ldots,b\}$
with $|P_{i,\black}| = |P_{i,\white}| = b_i$:
$j \in P_{i,\black}$ if and only if $v_j$ is contained in $p_i$
(and similarly for white).
If $p_0$ and $p_1$ are disjoint
then $P_{0,\black}$ and $P_{1,\black}$ are disjoint
and so are $P_{0,\white}$ and $P_{1,\white}$.
Define subsets
$D_{p_0,p_1,\black}, D_{p_0,p_1,\white} \subseteq \{1,\ldots,b\}$
and functions $h_{\black}, h_{\white}: \{1,\ldots,b\} \to \{0,\pm 1\}$ by
\begin{gather*}
D_{p_0,p_1,\black} = 
\{1,\ldots,b\} \smallsetminus (P_{0,\black} \cup P_{1,\black}), \quad
D_{p_0,p_1,\white} = 
\{1,\ldots,b\} \smallsetminus (P_{0,\white} \cup P_{1,\white}), \\
h_{\black}(i) = [i \in P_{1,\black}] - [i \in P_{0,\black}], \quad
h_{\white}(i) = [i \in P_{1,\white}] - [i \in P_{0,\white}],
\end{gather*}
so that, for instance,
$i \in D_{p_0,p_1,\black}$ if and only if $h_{\black}(i) = 0$;
we use here Iverson's notation.
Define the subsets $\Inv_{\black}, \Inv_{\white} \subseteq  \{1,\ldots,b\}^2$
and non negative integers
$\inv_{\black,p_0,p_1}, \inv_{\white,p_0,p_1} \in \NN$ by
\[ \Inv_{\ast} = \{(i_0,i_1) \in \{1,\ldots,b\}^2 \;|\;
i_0 < i_1, h_{\ast}(i_0) > h_{\ast}(i_1)\},
\quad
\inv_{\ast,p_0,p_1} = |\Inv_{\ast}|. \]
A tiling $\bt \in \cT(\cD_{p_0,p_1})$ is defined by a bijection
$\sigma_{\bt}: D_{p_0,p_1,\black} \to D_{p_0,p_1,\white}$
such that $K_{i,\sigma_{\bt}(i)} \in \{+1,-1\}$
for all $i \in D_{p_0,p_1,\black}$.
Define the subset $\Inv(\sigma_{\bt}) \subseteq D_{p_0,p_1,\black}^2$
and the non negative integer $\inv(\sigma_{\bt}) \in \NN$ by
\[ \Inv(\sigma_{\bt}) = \{(i_0,i_1) \in D_{p_0,p_1,\black}^2 \;|\;
i_0 < i_1, \sigma_{\bt}(i_0) > \sigma_{\bt}(i_1)\}, 
\quad
\inv(\sigma_{\bt}) = |\Inv(\sigma_{\bt})|. \]
Finally, for $\bt \in \cT(\cD_{p_0,p_1})$, define
$\tw_{p_0,p_1}(\bt), \tk(\bt) \in \ZZ/(2)$ by
\begin{align}
\label{equation:xtw}
\tw_{p_0,p_1}(\bt) &=
\left( 
\tk(\bt) + \inv(\sigma_{\bt}) + \inv_{\black,p_0,p_1} + \inv_{\white,p_0,p_1}
\right) \bmod 2, \\ 
\notag
(-1)^{\tk(\bt)} &=
\prod_{i \in D_{p_0,p_1,\black} } K_{i,\sigma_{\bt}(i)}.
\end{align}
%thus completing the definition of the matrix $\tilde A$.

Let $\cD \subset \RR^{n-1}$.  
Let $\tilde A \in \ZZ^{\cP \times \cP}$ be defined by
\begin{equation}
\label{equation:Delta}
\tilde A_{p_0,p_1} =
\sum_{\bt \in \cT(\cD_{p_0,p_1})} (-1)^{\tw_{p_0,p_1}(\bt)},
\end{equation}
%where $\tw_{p_0,p_1}(\bt) \in \ZZ/(2)$ is defined
%in Equation \ref{equation:xtw} below;
if $p_0$ and $p_1$ are not disjoint then $\tilde A_{p_0,p_1} = 0$.

\begin{lemma}
\label{lemma:Delta}
For $\tilde A$ as defined in Equation \ref{equation:Delta} and $N \in \NN$,
\[ \Delta(\cR_N) = (\tilde A^N)_{\emptyplug,\emptyplug}. \]
% for all $N \in \NN$.
\end{lemma}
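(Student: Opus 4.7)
The plan is to expand both sides combinatorially and verify they agree tiling by tiling. First I would use the definition of matrix product together with \eqref{equation:Delta} to write
\[
(\tilde A^N)_{\emptyplug,\emptyplug} \;=\; \sum_{p_0=\emptyplug,\,p_1,\ldots,p_{N-1},\,p_N=\emptyplug}\;\prod_{k=1}^{N}\;\sum_{\bff_k \in \cT(\cD_{p_{k-1},p_k})} (-1)^{\tw_{p_{k-1},p_k}(\bff_k)}.
\]
Interchanging sums and products turns this into a single sum indexed by plug-floor sequences $(p_0,\bff_1,p_1,\ldots,\bff_N,p_N)$, which by the identification in \eqref{equation:plugsandfloors} are in bijection with $\cT(\cR_N)$. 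Hence the claim reduces to proving the sign identity
\[
\Tw(\bt) \;\equiv\; \sum_{k=1}^{N} \tw_{p_{k-1},p_k}(\bff_k) \pmod 2 \qquad (\star)
\]
for every tiling $\bt$ of $\cR_N$, since then the right-hand side equals $\sum_{\bt} (-1)^{\Tw(\bt)} = \Delta(\cR_N)$ by \eqref{equation:tw}.

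To prove $(\star)$, I would pick a level-by-level labeling of the unit cubes of $\cR_N$: list the black (respectively white) cubes first by their level $k \in \{0,\ldots,N-1\}$ and, within each level, in the order induced by the fixed labeling $v_1,\ldots,v_b, w_1,\ldots,w_b$ of $\cD$. With this labeling, the permutation $\sigma_\bt$ splits naturally into blocks corresponding to horizontal dominoes (which live within a single level) and vertical dominoes (which link consecutive levels).

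Next I would analyze $\det(T_\bt) = \sign(\sigma_\bt)\prod_i K_{i,\sigma_\bt(i)}$ by isolating the floor contributions. The Kasteleyn sign $K_{ij}=(-1)^{x_1+\cdots+x_{k-1}}$ for a horizontal domino in direction $k<n$ does not depend on the vertical coordinate $x_n$, so the horizontal contribution from the $k$-th floor is exactly the Kasteleyn sign $\prod_{i \in D_{p_{k-1},p_k,\black}} K_{i,\sigma_{\bff_k}(i)}$ appearing in $\tk(\bff_k)$. The vertical dominoes contribute extra signs of the form $(-1)^{x_1+\cdots+x_{n-1}}$; paired across $p_k$, these cancel globally because each plug is balanced. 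The sign of $\sigma_\bt$ likewise factors: modulo the contributions $\inv(\sigma_{\bff_k})$ from horizontal matchings inside each floor, what remains is precisely the sign of the "reindexing" permutation needed to line up the upper halves of the vertical dominoes at level $k-1$ (cubes of $p_{k-1}$) with the lower halves at level $k$ (cubes of $p_k$), when these plugs are viewed inside the totally ordered list of level-$k$ cubes inherited from $\cD$.

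The main obstacle, and most delicate bookkeeping step, is checking that this reindexing sign equals $(-1)^{\inv_{\black,p_{k-1},p_k}+\inv_{\white,p_{k-1},p_k}}$ as defined via the functions $h_\black, h_\white$. I would verify this by observing that $h_\ast$ assigns $+1$ to cubes entering the plug on the upper side, $-1$ to cubes exiting on the lower side, and $0$ to cubes tiled horizontally at that level; the inversions of $h_\ast$ then count exactly the transpositions needed to separate the vertical-domino indices from the horizontal-domino indices inside the total ordering at each level. Since the parity of the color of a $\cD$-cube flips when the level parity changes, care must also be taken so that the roles of $\inv_\black$ and $\inv_\white$ match the actual colors in $\cR_N$; the balanced-plug condition ensures the resulting global parities combine correctly. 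Assembling these per-floor contributions according to \eqref{equation:xtw} yields $(\star)$ and completes the proof.
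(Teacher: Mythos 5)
Your overall strategy is the same as the paper's: reduce the identity to the per-tiling sign formula $(\star)$, pick a level-by-level labeling, and split both the inversion count and the Kasteleyn sign product by floor. However, the two concrete claims you make about the splitting are both false, and the argument only works because the two errors cancel each other modulo $2$ --- an observation you never make.

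First, the vertical dominoes' Kasteleyn signs do \emph{not} cancel globally. For a vertical domino over $s \subset p_k$ at position $(x_1,\ldots,x_{n-1})$ in $\cD$, the sign is $(-1)^{x_1+\cdots+x_{n-1}}$, which is exactly the color of $s$ as a cube of $\cD$: $+1$ on the $b_k$ black cubes and $-1$ on the $b_k$ white cubes of $p_k$. The product is therefore $(-1)^{b_k}$ per plug, and the total vertical contribution is $(-1)^{b_1+\cdots+b_{N-1}}$, not $1$; ``balanced'' makes $p_k$ have equal numbers of black and white cubes, which is precisely why you get $(-1)^{b_k}$ rather than a cancellation. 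Second, the permutation sign does not split exactly as $\prod_k (-1)^{\inv(\sigma_{\bff_k})+\inv_{\black}+\inv_{\white}}$: inversions $(i_0,i_1)$ in which both the black index and the white index cross between consecutive floors are counted by $\inv(\sigma_\bt)$ but by none of the per-floor terms, and there are exactly $b_k^2$ of them across the gap at level $k$. So the discrepancy in the inversion bookkeeping is $(-1)^{\sum_k b_k^2}$. The proof then closes only upon noticing $b_k^2 \equiv b_k \pmod 2$, so that the sign defect from the inversions exactly annihilates the sign defect $(-1)^{\sum_k b_k}$ from the vertical Kasteleyn factors. Without identifying these two nonzero defects and their mutual cancellation, the ``most delicate bookkeeping step'' you flag does in fact fail as you have stated it.
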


\begin{proof}
%[Proof of Lemma \ref{lemma:Delta}]
Write $\bt \in \cT(\cR_N)$ as a sequence of plugs and floors,
as in Equation \ref{equation:plugsandfloors}.
We claim that
\[ \Tw(\bt) = \sum_{1 \le k \le N} \tw_{p_{k-1},p_k}(f_k), \]
which completes the proof.
For this, we go back to the definition of $\Tw(\bt)$
in Equation \ref{equation:twist} and compute
$\inv(\sigma_{\bt})$ and $\kappa = \prod_i K_{i,\sigma_{\bt}(i)}$.
Let $b_k$ be the number of black unit cubes in $p_k$:
we claim that
\begin{equation}
\label{equation:inv}
\inv(\sigma_{\bt}) - \sum_{1 \le k \le N} 
\left( 
\inv(\sigma_{f_k}) + \inv_{\black,p_{k-1},p_k} + \inv_{\white,p_{k-1},p_k}
\right) =
\sum_{1 \le k < N} b_k^2.
\end{equation}

Let $b$ be the number of black unit cubes in $\cD$.
First label black and white unit cubes in $\cD$.
Next label unit cubes in $\cR_N$,
using the previous labels in each floor
and proceeding by increasing floor.
In particular, for $i \in \{1,\ldots,Nb\}$,
both the $i$-th black and white unit cubes 
are contained in floor $\lceil i/b \rceil \in \{1,\ldots,N\}$.
We therefore have
$| \lceil i/b \rceil - \lceil \sigma_{\bt}(i)/b \rceil | \le 1$
for all $i$.

Recall that an inversion for $\sigma_{\bt}$ is a pair $(i_0,i_1)$
of indices for black unit cubes such that $i_0 < i_1$
and $j_0 = \sigma_{\bt}(i_0) > \sigma_{\bt}(i_1) = j_1$.
We thus have $\lceil i_0/b \rceil \le \lceil i_1/b \rceil$
and $\lceil j_0/b \rceil \ge \lceil j_1/b \rceil$.
We consider the possible cases.
\begin{itemize}
\item{If 
$\lceil i_0/b \rceil = \lceil i_1/b \rceil$
and $\lceil j_0/b \rceil = \lceil j_1/b \rceil$
we may write $k = \lceil i_0/b \rceil = \lceil j_0/b \rceil$.
Both dominoes are then contained in floor $k$,
and the inversion $(i_0,i_1)$ is counted once by 
$\inv(\sigma_{\bt})$ and once by $\inv(\sigma_{f_k})$.
Strictly speaking, in the second case the inversion
is now called $(i_0 - (k-1)b, i_1 - (k-1)b)$,
but we shall not follow such relabelings from now on.}
\item{If $k = \lceil i_0/b \rceil = \lceil i_1/b \rceil$
and $\lceil j_0/b \rceil > \lceil j_1/b \rceil$
then the inversion $(i_0,i_1)$ is counted once by $\inv(\sigma_{\bt})$
and once by $\inv_{\black,p_{k-1},p_k}$.}
\item{If $k = \lceil j_0/b \rceil = \lceil j_1/b \rceil$
and $\lceil i_0/b \rceil < \lceil i_1/b \rceil$
then the inversion $(i_0,i_1)$ is counted once by $\inv(\sigma_{\bt})$
and once by $\inv_{\white,p_{k-1},p_k}$
(in the second case it is called
$(j_1 - (k-1)b,j_0 - (k-1)b)$).}
\item{Finally, if
$k = \lceil i_0/b \rceil < \lceil i_1/b \rceil = k+1$
and 
$k = \lceil j_1/b \rceil < \lceil j_0/b \rceil = k+1$
then the inversion $(i_0,i_1)$ is counted once by $\inv(\sigma_{\bt})$
and not counted by the summation on the left hand side.
For each $k$, there exist $b_k^2$ such inversions,
completing the proof of Equation \ref{equation:inv}.}
\end{itemize}

Write $\kappa = \kappa_{\hz} \kappa_{\vt}$,
where
\[ \kappa_{\hz} =
\prod_{\lceil i/b \rceil = \lceil \sigma_{bt}(i)/b \rceil}
K_{i,\sigma_{\bt}(i)}, \qquad
\kappa_{\vt} =
\prod_{\lceil i/b \rceil \ne \lceil \sigma_{bt}(i)/b \rceil}
K_{i,\sigma_{\bt}(i)}. \]
For each $k$, $1 \le k < N$, we have
\[ 
\prod_{\{ \lceil i/b \rceil, \lceil \sigma_{bt}(i)/b \rceil \} = \{ k,k+1 \} }
K_{i,\sigma_{\bt}(i)} = (-1)^{b_k} \]
and therefore $\kappa_{\vt} = (-1)^{(b_1 + \cdots + b_{N-1})}$.
For $1 \le k \le N$, let
\[ \kappa_k = (-1)^{\tk(f_k)} =
\prod_{i \in D_{p_{k-1},p_k,\black}} K_{i,\sigma_{f_k}(i)} \]
so that $\kappa_{\hz} = \prod_k \kappa_k$ and therefore
$\kappa \cdot (\prod_k \kappa_k) = (-1)^{(b_1 + \cdots + b_{N-1})}$.
The desired result now follows from Equation \ref{equation:inv}
and the fact that $b_k \equiv b_k^2 \pmod 2$.
\end{proof}

%% \begin{remark}
%% \label{remark:twist3}
%% The construction above provides us with an alternative proof
%% of Lemma \ref{lemma:ZZ2}.
%% The construction above must be compared with the construction
%% of the matrices in $(\ZZ[q,q^{-1}])^{\cP\times\cP}$
%% given in \cite{regulardisk}.
%% If we want to compute the defect $\Delta(\cR_N)$,
%% the two constructions coincide.
%% We again omit details.
%% \end{remark}

\begin{lemma}
\label{lemma:tildeA}
Let $\cD \subset \RR^{n-1}$.  
Let $\cP$ be the set of plugs for $\cD$.
Let $\tilde A \in \ZZ^{\cP \times \cP}$ be the matrix defined
in Equations \ref{equation:Delta} and \ref{equation:xtw}.
Then $\tilde A$ is real symmetric.
\end{lemma}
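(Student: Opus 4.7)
The plan is to show $\tilde A_{p_0,p_1} = \tilde A_{p_1,p_0}$ by arguing that, for each tiling $\bt \in \cT(\cD_{p_0,p_1}) = \cT(\cD_{p_1,p_0})$, the twists $\tw_{p_0,p_1}(\bt)$ and $\tw_{p_1,p_0}(\bt)$ coincide in $\ZZ/(2)$. Notice immediately that the set of tilings, the bijection $\sigma_{\bt}$, the index sets $D_{p_0,p_1,\ast} = D_{p_1,p_0,\ast}$, the quantity $\tk(\bt)$, and $\inv(\sigma_{\bt})$ do not depend on the order of the pair $(p_0,p_1)$. So the entire question reduces to comparing $\inv_{\black,p_0,p_1} + \inv_{\white,p_0,p_1}$ with $\inv_{\black,p_1,p_0} + \inv_{\white,p_1,p_0}$ modulo $2$.

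Next I would observe that swapping $p_0$ and $p_1$ replaces $h_{\black}$ by $-h_{\black}$, so the new inversion set $\Inv_{\black}^{\text{new}} = \{(i_0,i_1) : i_0 < i_1,\ h_{\black}(i_0) < h_{\black}(i_1)\}$ is disjoint from the old $\Inv_{\black}$, and their union is the set of pairs $(i_0,i_1)$ with $i_0 < i_1$ and $h_{\black}(i_0) \ne h_{\black}(i_1)$. Hence $\inv_{\black,p_0,p_1} + \inv_{\black,p_1,p_0} = N_{\text{diff},\black}$, and so $\inv_{\black,p_1,p_0} \equiv N_{\text{diff},\black} + \inv_{\black,p_0,p_1} \pmod 2$; analogously for white. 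Therefore it suffices to prove that $N_{\text{diff},\black} + N_{\text{diff},\white} \equiv 0 \pmod 2$.

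The core computation is then elementary counting. Since $h_{\black}$ takes the three values $0$, $+1$, $-1$ with multiplicities $|D_{p_0,p_1,\black}|$, $|P_{1,\black}|$, $|P_{0,\black}|$ respectively, setting $c_{i,\ast} = |P_{i,\ast}|$ and $d_{\ast} = |D_{p_0,p_1,\ast}|$ gives
\begin{equation*}
N_{\text{diff},\ast} = d_{\ast}\,c_{0,\ast} + d_{\ast}\,c_{1,\ast} + c_{0,\ast}\,c_{1,\ast}.
\end{equation*}
The crucial point is that each plug $p_i$ is balanced, so $c_{i,\black} = c_{i,\white}$, and $\cD$ is balanced, so $d_{\black} = d_{\white}$. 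Consequently $N_{\text{diff},\black} = N_{\text{diff},\white}$, whence their sum is automatically even.

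Combining the three steps yields $\tw_{p_0,p_1}(\bt) = \tw_{p_1,p_0}(\bt)$ for every $\bt$, and summing over $\cT(\cD_{p_0,p_1})$ gives $\tilde A_{p_0,p_1} = \tilde A_{p_1,p_0}$; on the diagonal, or when $p_0 \cap p_1 \neq \emptyplug$, symmetry is trivial. I expect the only subtle point to be the bookkeeping in the second step — making sure that the identity $\inv + \inv^{\text{new}} = N_{\text{diff}}$ is applied correctly to both colors before invoking the balance hypothesis — but once that is laid out the argument is essentially a parity count.
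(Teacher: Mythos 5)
Your proof is correct and takes essentially the same route as the paper's: both observe that $\tk(\bt)$ and $\inv(\sigma_\bt)$ are symmetric in $(p_0,p_1)$, reduce to comparing the $\inv_{\ast}$ terms, and then show $\inv_{\black,p_0,p_1}+\inv_{\black,p_1,p_0}=\inv_{\white,p_0,p_1}+\inv_{\white,p_1,p_0}$ by recognizing each side as counting pairs with distinct $h$-values and invoking the balance of the plugs and of $\cD$. The paper phrases the count directly as $b_0b_1+(b_0+b_1)(b-b_0-b_1)$, which is exactly your $c_{0,\ast}c_{1,\ast}+d_\ast(c_{0,\ast}+c_{1,\ast})$.
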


\begin{proof}
Let $p_0, p_1 \in \cP$ be disjoint plugs;
let $\bt \in \cT(\cD_{p_0,p_1})$ be a tiling.
We prove that $\tw_{p_0,p_1}(\bt) = \tw_{p_1,p_0}(\bt)$.
Indeed, the definitions of $\tk(\bt)$ and of $\inv(\sigma_{\bt})$
are unchanged, so it suffices to prove that
$\inv_{\black,p_0,p_1} + \inv_{\black,p_1,p_0} =
\inv_{\white,p_0,p_1} + \inv_{\white,p_1,p_0}$.
Indeed, $\inv_{\black,p_0,p_1} + \inv_{\black,p_1,p_0} =
b_0 b_1 + (b_0 + b_1) (b - b_0 - b_1)$ since it counts pairs  
$\{i_0,i_1\} \subseteq \{1,\ldots,b\}$ with
$h_{\black}(i_0) \ne h_{\black}(i_1)$.
For the same reason, we also have
$\inv_{\white,p_0,p_1} + \inv_{\white,p_1,p_0} =
b_0 b_1 + (b_0 + b_1) (b - b_0 - b_1)$ and we are done.
\end{proof}

%%%%%%%%%%%%%%%%%%%%%%%%%%%%%%%%%%%%%%%%%%%%%%%%%%%%%%%%%%%%%%%%%%%%%%%

\section{Computing the defect $\Delta$}
\label{section:Delta}

In this section we first give an estimate for $\Delta(\cR_N)$
as a function of $N$.
We then give an explicit formula in special cases.
In many cases the defect $\Delta(\cR)$ is easier to compute
than the number of tilings $|\cT(\cR)|$:
this is related to determinants being easier to compute
than permanents.

\begin{lemma}
\label{lemma:fu}
Let $\cD \subset \RR^{n-1}$.  
For every $p \in \cP$, if $p$ contains exactly $N$ unit cubes
then the cork $\cR_{0,N;\emptyplug,p}$ admits a tiling.
\end{lemma}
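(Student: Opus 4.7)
The plan is to proceed by induction on $N = |p|$. The base case $N = 0$ is immediate: $p = \emptyplug$ and the cork $\cR_{0,0;\emptyplug,\emptyplug}$ is empty. For the inductive step $N \ge 2$, I would reduce to a plug of size $N-2$, apply induction, and extend by explicitly tiling the two new top floors. The main obstacle is the preliminary combinatorial step: producing cubes $s_b, s_w \in p$ of opposite colours together with a path $s_b = r_0, r_1, \ldots, r_k = s_w$ in $\cG_{\cD}$ whose interior vertices $r_1, \ldots, r_{k-1}$ all lie in $\cD \smallsetminus p$ (bipartiteness then forces $k$ to be odd).

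To produce such a path, if $p$ contains an adjacent pair of opposite colours take them with $k = 1$. Otherwise $p$ is an independent set in $\cG_{\cD}$; since $\cG_{\cD}$ is connected and has edges, this forces $p \subsetneq \cD$, so $\cG_{\cD} \smallsetminus p$ has nonempty components $C_1, \ldots, C_m$. I would form the auxiliary bipartite quotient $Q$ on the vertex set $p \sqcup \{C_1, \ldots, C_m\}$ where $v \in p$ is joined to $C_i$ exactly when $v$ is adjacent in $\cG_{\cD}$ to some cube of $C_i$. Since $p$ is independent, any path in $\cG_{\cD}$ between two vertices of $Q$ induces a walk in $Q$, so connectedness of $\cG_{\cD}$ yields connectedness of $Q$. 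If every $N(C_i)$ were monochromatic, one could colour each $C_i$ by the colour of its $p$-neighbours, making every edge of $Q$ join vertices of the same colour; this would split $Q$ into a black half and a white half, each nonempty since $p$ is balanced, contradicting connectedness. Hence some $C_i$ is adjacent to cubes of both colours in $p$, and any shortest path through $C_i$ joining such cubes has the required property.

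Given the path, set $p' = p \smallsetminus \{s_b, s_w\}$ and let $\bt'$ be a tiling of $\cR_{0,N-2;\emptyplug,p'}$ furnished by the inductive hypothesis. The region to be added is
\[
R_{\mathrm{ext}} = \bigl(p' \times [N-3, N-2]\bigr) \cup \bigl(\cD \times [N-2, N-1]\bigr) \cup \bigl((\cD \smallsetminus p) \times [N-1, N]\bigr),
\]
and I would tile it as follows: vertical dominos $s \times [N-3, N-1]$ for each $s \in p'$; vertical dominos $s \times [N-2, N]$ for each $s \in \cD \smallsetminus p$ with $s \notin \{r_1, \ldots, r_{k-1}\}$; and a staircase of horizontal dominos along the path, placing $(r_{j-1}, r_j) \times [N-2, N-1]$ when $j$ is odd and $(r_{j-1}, r_j) \times [N-1, N]$ when $j$ is even. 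A routine bookkeeping check confirms that each cube of $R_{\mathrm{ext}}$ is covered exactly once: each interior vertex $r_j$ contributes two cubes, consumed by the two incident horizontals at opposite levels, while each endpoint $s_b, s_w \in p$ contributes a single cube at level $[N-2, N-1]$ that is covered precisely because $k$ is odd. Combining this with $\bt'$ produces the desired tiling of $\cR_{0,N;\emptyplug,p}$. Once the combinatorial lemma about the interior-avoiding path is established, the extension is independent of the length $k$, so no further technical difficulty arises.
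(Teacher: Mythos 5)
Your proof is correct, and its overall skeleton (induction, peel off one black cube and one white cube of $p$, extend by two floors using vertical dominoes plus a staircase of horizontal dominoes along a black-to-white path whose interior avoids $p$) is exactly the paper's. Where you differ is in how you produce the clean path: you run a rather elaborate argument, passing to the quotient graph $Q$ on $p \sqcup \{C_1,\ldots,C_m\}$ and using balance of $p$ plus connectedness of $\cG_\cD$ to find a component $C_i$ adjacent to both colors of $p$. The paper gets the same conclusion with a one-line minimality trick: among all pairs $(v,w) \in p$ with $v$ black and $w$ white, pick one minimizing $d_{\cG_\cD}(v,w)$; then a shortest path from $v$ to $w$ cannot meet any other cube $u \in p$, since $u$ would either form with $w$ (if $u$ is black) or with $v$ (if $u$ is white) an opposite-colored pair in $p$ at strictly smaller distance. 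Your quotient-graph construction is valid but does substantially more work; the minimality argument buys the same path with essentially no machinery. On the plus side, your write-up of the last-two-floors tiling is more explicit than the paper's, and you correctly noticed (and used) that the path length $k$ must be odd by bipartiteness so that both endpoints land on the correct level.
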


%This lemma is very similar to Lemma 3.1 from \cite{regulardisk};
%in that paper we provide a more detailed explanation
%but the short proof below is complete.

\begin{proof}
The proof is by induction on $b = N/2$, the number of black squares in $p$.
The case $b = 0$ is trivial.
For $b > 0$, consider a pair of unit cubes $v,w$ in $p$,
$v$ black, $w$ white, such that the distance between $v$ and $w$
(measured in $\cG_\cD$) is minimal.
Let $\tilde p = p \smallsetminus \{v,w\} \in \cP$. 
By the induction hypothesis there exists a tiling $\tilde\bt$ of
$\cR_{0,\tilde N;\emptyplug,\tilde p}$ for $\tilde N = N - 2$.
On the first $\tilde N$ floors $\bt$ coincides with $\tilde \bt$.
In order to construct the last two floors,
consider a path of minimal length from $v$ to $w$.
By minimality, this path intersects no other unit cubes in $p$.
For unit cubes not in the path the final two floors are filled
with a vertical domino.
Along the path we use horizontal dominoes, completing the construction.
\end{proof}

% If the dimension of the ambient space is $n = 3$,
% the first and last tiling can not be joined by flips,
% not even if we are allowed to borrow some surrounding space:
% they have twists $+1$ and $-1$.
% if $n \ge 4$, however, they can.

\begin{lemma}
\label{lemma:full}
Let $\cD \subset \RR^{n-1}$,
where $n \ge 4$.
Assume that $\cD$ contains a
$3\times 3 \times 1 \times \cdots \times 1$ box.
Then there exists $N_{\min}$ such that for all $p_0,p_1 \in \cP$
we have $|(A^N)_{p_0,p_1}| > |(\tilde A^N)_{p_0,p_1}|$
for all $N \ge N_{\min}$.
\end{lemma}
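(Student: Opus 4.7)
The plan is to show that, uniformly in $p_0,p_1\in\cP$, every cork $\cR_{0,N;p_0,p_1}$ admits a tiling that contains a valid trit configuration. Since $(A^N)_{p_0,p_1}$ counts the tilings of the cork while $(\tilde A^N)_{p_0,p_1}$ is the corresponding signed count according to the parity of $\sum_k \tw_{p_{k-1},p_k}(f_k)$ (by Lemma \ref{lemma:Delta} and its obvious extension from $\cR_N$ to corks), the inequality $|(\tilde A^N)_{p_0,p_1}|\le(A^N)_{p_0,p_1}$ is automatic. Strict inequality is equivalent to the existence of two tilings of opposite twist, and by Theorem \ref{lemma:fliptrit} a single trit flips the twist. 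So producing one tiling that admits a trit is enough.

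First I would construct a \emph{trit gadget}: a tiling $\bt^\ast$ of $\cD\times[0,L]$ for a fixed even $L$ (I expect $L=4$ to suffice) such that the plugs at levels $0$ and $L$ are both empty and three of its dominoes form a trit configuration. This is where the $3\times 3\times 1\times\cdots\times 1$ box $B\subseteq\cD$ is used. Inside $B\times[1,3]$ I place a $2\times 2\times 2$ block, using two of its dimensions from the $3\times 3$ face of $B$ and the third from the vertical direction; I remove two opposite unit cubes from this block and tile the remaining six by either of the two trit patterns; I pair each removed cube with an adjacent unit cube lying outside the $2\times 2\times 2$ block but still inside $B\times[0,L]$, which is possible precisely because $B$ has an extra row and column around the block and because the floors just above and below the block are still available; I fill the remaining cubes of $B\times[0,L]$ by inspection; and finally I tile $(\cD\smallsetminus B)\times[0,L]$ using only vertical dominoes, which is possible since $L$ is even. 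The resulting $\bt^\ast$ has both boundary plugs empty and contains a trit.

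Second, using Lemma \ref{lemma:fu} and its time-reversed version, set $N_0=|\cD|$; then for every plug $p$ both $\cR_{0,N_0;p,\emptyplug}$ and $\cR_{0,N_0;\emptyplug,p}$ admit tilings. Let $N_{\min}=2N_0+L+2$, where the extra $2$ absorbs a possible parity adjustment. For $N\ge N_{\min}$ I concatenate, in order, a tiling of $\cR_{0,N_0;p_0,\emptyplug}$, a vertical slab of suitable even height (adjusting $N_0$ by $1$ on one side if needed to fix parity), the gadget $\bt^\ast$, and a tiling of $\cR_{N-N_0,N;\emptyplug,p_1}$. The result is a tiling of $\cR_{0,N;p_0,p_1}$ that contains the trit of $\bt^\ast$; executing that trit produces a second tiling of opposite twist, which gives the strict inequality.

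The main obstacle is the gadget construction of the first step. The $3\times 3$ hypothesis is exactly what supplies enough slack around a $2\times 2\times 2$ block to let the two opposite ``removed'' corners find partners outside the block without disturbing the empty plugs on the top and bottom of $\cD\times[0,L]$. Once this explicit check is carried out, the concatenation in the second step is routine, and the conclusion is immediate from Theorem \ref{lemma:fliptrit}.
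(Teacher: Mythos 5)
Your overall strategy matches the paper's: note that $(A^N)_{p_0,p_1}$ enumerates tilings of the cork while $(\tilde A^N)_{p_0,p_1}$ is the corresponding signed count, so $|(\tilde A^N)_{p_0,p_1}| \le (A^N)_{p_0,p_1}$ is automatic and strict inequality follows once the cork admits two tilings of opposite sign; then manufacture such tilings by using Lemma \ref{lemma:fu} to route $p_0$ and $p_1$ down to $\emptyplug$ and splicing in a twist-altering segment with empty plugs on both sides. Where you build a ``trit gadget'' in $\cD\times[0,4]$ and invoke Theorem \ref{lemma:fliptrit}, the paper takes a shorter route: the vertical tiling of $\cR_2=\cD\times[0,2]$ has twist $0$, and replacing the nine vertical dominoes inside the $3\times 3\times 2$ sub-box by the tiling from Figure \ref{fig:knot} produces a tiling of $\cR_2$ of twist $1$, so both signs are already available at height $2$ with no new construction. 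Your gadget is plausible, but you only assert its existence ``by inspection''; the analogous object in the paper is already exhibited in Figure \ref{fig:knot}.

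There is one genuine gap: parity. Every building block you concatenate has even height --- Lemma \ref{lemma:fu} yields a cork of height $|p|$, which is even because plugs are balanced; your gadget has $L=4$; vertical padding comes in even lengths --- so your construction reaches only even $N$. Saying the extra $2$ in $N_{\min}$ ``absorbs a parity adjustment,'' or that you will ``adjust $N_0$ by $1$ on one side,'' does not tell you how to produce an odd-height piece, and Lemma \ref{lemma:fu} does not give one; nor is $\cD$ assumed to admit a tiling of its own, so you cannot simply insert one horizontal floor. The paper fixes this by applying Lemma \ref{lemma:fu} to the full plug $\fullplug$: the cork $\cR_{0,|\cD|;\emptyplug,\fullplug}$ coincides, as a region, with $\cR_{|\cD|-1}$, an odd-height cylinder with empty plugs at both ends that can be spliced in to hit odd $N$. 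You should incorporate this (or an equivalent odd-height piece) before your $N_{\min}$ can be claimed to work for all $N\ge N_{\min}$.
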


\begin{proof}
The vertical tiling $\bt_0$ of $\cR_2 = \cD \times [0,2]$
satisfies $\Tw(\bt_0) = 0$. 
Replace the vertical dominoes in the 
$3\times 3 \times 1 \times \cdots \times 1 \times 2$ box
by the dominoes shown in Figure \ref{fig:knot} to obtain
a tiling $\bt_1$ of $\cR_2 = \cD \times [0,2]$ with $\Tw(\bt_1) = 1$.
Let $b_{\cD}$ be the number of black unit cubes in $\cD$.
Apply Lemma \ref{lemma:fu} to the full plug $\fullplug$
to obtain a tiling $\bt_\bullet$ of the cork 
$\cR_{0,2b_{\cD};\emptyplug,\fullplug}$.  The tiling
$\bt_{\bullet}$ can be considered a tiling of $\cR_{2b_{\cD} - 1}$.
There are therefore tilings of $\cR_{N_0}$ of either twist
for $N_0 \ge 2b_{\cD} + 1$.

Take $N_{\min} = 6b_{\cD} + 1$ and $N \ge N_{\min}$.
Assume that $p_i$ contains $b_i$ black unit cubes.
Apply Lemma \ref{lemma:fu} to $p_0$ to obtain a tiling
of the cork $\cR_{0,2b_{0};p_0,\emptyplug}$.
Apply Lemma \ref{lemma:fu} to $p_1$ to obtain a tiling
of the cork $\cR_{N-2b_1,N;\emptyplug,p_1}$.
Clearly, $(N-2b_1) - 2b_0 \ge N_0$.
From the previous paragraph,
there exist tilings of $\cR_{2b_0, N-2b_1; \emptyplug, \emptyplug}$
of either twist.
Juxtapose the tilings to obtain two tilings
$\tilde\bt_0, \tilde\bt_1 \in \cT(\cR_{0,N;p_0,p_1})$
with contributions of opposite signs to $(\tilde A^N)_{p_0,p_1}$,
completing the proof.
\end{proof}

\begin{lemma}
\label{lemma:lambda}
Let $\cD \subset \RR^{n-1}$,
where $n \ge 4$.
Assume that $\cD$ contains a
$3\times 3 \times 1 \times \cdots \times 1$ box.
Then there exist $\lambda > 1$ and $C > 0$ such that
\[ \lim_{N \to \infty} \frac{|\cT(\cR_N)| - C\lambda^N}{\lambda^N} = 0. \]
Furthermore, $\Delta(\cR_N)$ as a function of $N$
is either eventually constant,
eventually periodic with period $2$ or
there exist $\tilde\lambda \in (1,\lambda)$ and $\tilde C > 0$ such that
\[ \limsup_{N \to \infty} \frac{|\Delta(N)|}{\tilde\lambda^N} = \tilde C. \]
\end{lemma}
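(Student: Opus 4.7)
The plan is to express both $|\cT(\cR_N)|$ and $\Delta(\cR_N)$ as $(\emptyplug,\emptyplug)$-entries of $N$-th powers of real symmetric matrices and then invoke spectral theory. By Section~\ref{section:plug}, $|\cT(\cR_N)| = (A^N)_{\emptyplug,\emptyplug}$, and by Lemma~\ref{lemma:Delta}, $\Delta(\cR_N) = (\tilde A^N)_{\emptyplug,\emptyplug}$; here $\tilde A$ is symmetric by Lemma~\ref{lemma:tildeA}, while $A$ is symmetric because the number of tilings of $\cD_{p_0,p_1}$ does not depend on an ordering of the pair. Each matrix admits a spectral decomposition in an orthonormal basis, giving
\[
(A^N)_{\emptyplug,\emptyplug} = \sum_i \mu_i^N\, u_i(\emptyplug)^2,
\qquad
(\tilde A^N)_{\emptyplug,\emptyplug} = \sum_i \tilde\mu_i^N\, \tilde u_i(\emptyplug)^2.
\]

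First I would handle $A$. It is nonnegative, and Lemma~\ref{lemma:fu} produces a closed walk from $\emptyplug$ to every plug $p$, so $A$ is irreducible. Aperiodicity at $\emptyplug$ is obtained by combining the length-$2$ vertical cycle with a closed walk of some odd length; the hypothesis that $\cD$ contains a $3\times 3\times 1\times\cdots\times 1$ box provides the flexibility to construct such a walk (for instance, whenever $\cD$ itself admits an $(n-1)$-dimensional domino tiling, one gets a self-loop at $\emptyplug$). Under irreducibility and aperiodicity, Perron--Frobenius yields a simple largest eigenvalue $\lambda>0$ with strictly positive eigenvector $u$, and $|\mu_i| < \lambda$ for every other eigenvalue, so $(A^N)_{\emptyplug,\emptyplug} = C\lambda^N + o(\lambda^N)$ with $C := u(\emptyplug)^2 > 0$. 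The bound $\lambda > 1$ comes from $(A^2)_{\emptyplug,\emptyplug}\ge 2$, using the pair of tilings of $\cR_2$ of distinct twists constructed inside the proof of Lemma~\ref{lemma:full}.

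For $\tilde A$, the entrywise bound $|\tilde A_{p_0,p_1}| \le A_{p_0,p_1}$ gives $\tilde\lambda := \rho(\tilde A) \le \lambda$. The crucial step is to strengthen this to strict inequality $\tilde\lambda < \lambda$. By the equality case of Perron--Frobenius (Wielandt's rigidity theorem), $\tilde\lambda = \lambda$ would force $\tilde A = \eta\, D A D$ for some diagonal matrix $D$ with $\pm 1$ entries and a sign $\eta\in\{\pm 1\}$; since $D^2 = I$, this would yield $|(\tilde A^N)_{p_0,p_1}| = (A^N)_{p_0,p_1}$ for every $p_0, p_1$ and every $N$, directly contradicting Lemma~\ref{lemma:full}. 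Therefore $\tilde\lambda < \lambda$.

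To conclude, let $\tilde\lambda_\ast := \max\{|\tilde\mu_i| : \tilde u_i(\emptyplug) \ne 0\}$; by the previous step $\tilde\lambda_\ast \le \tilde\lambda < \lambda$. If $\tilde\lambda_\ast < 1$, then $(\tilde A^N)_{\emptyplug,\emptyplug}\to 0$; being integer-valued, $\Delta(\cR_N)$ is eventually constant (equal to $0$). If $\tilde\lambda_\ast = 1$, the only eigenvalues contributing to the limit are $\pm 1$, and $(\tilde A^N)_{\emptyplug,\emptyplug} = a + b(-1)^N + o(1)$ for integers $a,b$; integrality forces the sequence to be eventually equal to $a + b(-1)^N$, hence constant when $b=0$ and periodic of period $2$ otherwise. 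If $\tilde\lambda_\ast > 1$, set $\tilde\lambda := \tilde\lambda_\ast \in (1,\lambda)$ and isolate the contributions of eigenvalues of modulus $\tilde\lambda$ to obtain $\limsup_{N\to\infty} |\Delta(\cR_N)|/\tilde\lambda^N = \tilde C > 0$. The rigidity comparison of $\tilde A$ with $A$ is the main obstacle I foresee; aperiodicity of $A$ is more concrete but still needs care to be deduced cleanly from the box hypothesis alone.
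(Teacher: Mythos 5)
Your proposal is essentially the same spectral-theoretic argument as the paper's: write $|\cT(\cR_N)| = (A^N)_{\emptyplug,\emptyplug}$ and $\Delta(\cR_N) = (\tilde A^N)_{\emptyplug,\emptyplug}$, apply Perron--Frobenius to the primitive matrix $A$, and compare $\tilde A$ to $A$ via the entrywise bound $|\tilde A_{p_0,p_1}| \le A_{p_0,p_1}$. On the sub-steps you diverge in ways that are mostly improvements: your $(A^2)_{\emptyplug,\emptyplug} \ge 2$ argument for $\lambda > 1$ is more elementary than the paper's route through algebraic integers and Kronecker's theorem; your explicit invocation of Wielandt's rigidity theorem fills in what the paper compresses into ``again from Perron--Frobenius''; and your use of $\tilde\lambda_\ast$ (the largest modulus among eigenvalues whose eigenvector does not vanish at $\emptyplug$) rather than the raw spectral radius $\rho(\tilde A)$ correctly handles the corner case where the top eigenvectors of $\tilde A$ might vanish at $\emptyplug$, which the paper's sketch glosses over.

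The one genuine soft spot, which you yourself flag, is the aperiodicity of $A$. Your attempted odd closed walk via a self-loop at $\emptyplug$ requires $\cD$ itself to admit a tiling, which is not guaranteed by the standing hypotheses (a balanced contractible $\cD$ need not be tileable). There is no need for an ad hoc construction here: Lemma~\ref{lemma:full} already shows that $(A^N)_{p_0,p_1} > 0$ for \emph{all} $p_0, p_1 \in \cP$ once $N \ge N_{\min}$, i.e.\ $A^{N_{\min}}$ is an entrywise positive matrix, which is precisely the statement that $A$ is primitive. Citing that lemma replaces both your irreducibility and aperiodicity discussions in one stroke and is in fact what the paper does. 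With that substitution, your proof is complete and correct.
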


\begin{proof}
We apply the Perron-Frobenius Theorem to the matrix $A$.
It follows from Lemma \ref{lemma:full} that there exists
a positive eigenvalue $\lambda$ such that all other eigenvalues
have strictly smaller absolute value.
The associated eigenvector has positive entries and therefore
$|\cT(\cR_N)| = (A^N)_{\emptyplug,\emptyplug}$ has a leading term
$C \lambda^N$:
any other term is exponentially smaller.
Since $A$ is symmetric and all entries of $A$ are integers,
all eigenvalues are real algebraic integers.
If an eigenvalue belongs to $\RR \smallsetminus \{-1,0,1\}$,
one of its conjugates must have absolute value larger than $1$
and therefore $\lambda > 1$.
If all eigenvalues belong to $\{-1,0,1\}$,
$|\cT(\cR_N)|$ is eventually periodic with period $1$ or $2$
and therefore bounded,
contradicting the construction in the proof of Lemma \ref{lemma:full}.

Again from Perron-Frobenius, all eigenvalues of $\tilde A$
have absolute value smaller than $\lambda$.
Let $\tilde\lambda$ be the maximum absolute value of eigenvalues of $\tilde A$.
If $\tilde\lambda \le 1$ then all eigenvalues belong to $\{-1,0,1\}$
and therefore 
$\Delta(\cR_N)$ is eventually periodic with period $1$ or $2$.
If $\tilde\lambda > 1$ then at least one of $\pm\tilde\lambda$
is an eigenvalue, implying the last estimate in the statement.
\end{proof}

\begin{coro}
\label{coro:quasibalance}
Let $\cD \subset \RR^{n-1}$,
where $n \ge 4$.
Assume that $\cD$ contains a
$3\times 3 \times 1 \times \cdots \times 1$ box.
Then
\[ \lim_{N \to \infty}
\frac{|\{\bt \in \cT(\cR_N) \;|\; \Tw(\bt) = 0 \}|}{|\cT(\cR_N)|} = \frac12,
\quad
\lim_{N \to \infty}
\frac{|\{\bt \in \cT(\cR_N) \;|\; \Tw(\bt) = 1 \}|}{|\cT(\cR_N)|} = \frac12.
\]
\end{coro}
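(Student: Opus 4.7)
The plan is to combine the two asymptotic estimates in Lemma \ref{lemma:lambda} with the elementary identities relating $|\cT(\cR_N)|$ and $\Delta(\cR_N)$ to the counts $N_i(N) = |\{\bt \in \cT(\cR_N) \;|\; \Tw(\bt) = i\}|$.

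First I would observe the algebraic identity: since $\Tw(\bt) \in \ZZ/(2)$, we have
\[
|\cT(\cR_N)| = N_0(N) + N_1(N), \qquad \Delta(\cR_N) = N_0(N) - N_1(N),
\]
and therefore
\[
\frac{N_i(N)}{|\cT(\cR_N)|} = \frac{1}{2} + \frac{(-1)^i}{2} \cdot \frac{\Delta(\cR_N)}{|\cT(\cR_N)|},
\qquad i \in \{0,1\}.
\]
Thus the corollary reduces to showing that $\Delta(\cR_N)/|\cT(\cR_N)| \to 0$ as $N \to \infty$.

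Next, I would invoke Lemma \ref{lemma:lambda} directly. The hypothesis on $\cD$ (containing a $3\times 3 \times 1 \times \cdots \times 1$ box) is exactly what is required there. From that lemma, there exist $\lambda > 1$ and $C > 0$ with $|\cT(\cR_N)| = C\lambda^N + o(\lambda^N)$. The same lemma provides a trichotomy for $\Delta(\cR_N)$: either it is eventually constant, eventually $2$-periodic, or there exists $\tilde\lambda \in (1, \lambda)$ with $|\Delta(\cR_N)| = O(\tilde\lambda^N)$. In each of the three cases, $|\Delta(\cR_N)|$ is bounded above by a quantity of the form $C'\tilde\lambda^N$ with $\tilde\lambda < \lambda$ (in the first two cases take any $\tilde\lambda \in (1,\lambda)$).

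Finally, dividing the two estimates gives
\[
\left| \frac{\Delta(\cR_N)}{|\cT(\cR_N)|} \right| \le \frac{C'\tilde\lambda^N}{C\lambda^N (1 + o(1))} = O\bigl((\tilde\lambda/\lambda)^N\bigr) \to 0,
\]
which via the identity above yields both limits in the statement. There is really no main obstacle here: all the combinatorial and spectral work has been done in Lemmas \ref{lemma:full} and \ref{lemma:lambda}, and the strict separation $\tilde\lambda < \lambda$ — which is the key quantitative input — was already established using the Perron--Frobenius argument applied to the symmetric matrix $\tilde A$ (whose symmetry is Lemma \ref{lemma:tildeA}) together with the comparison $|(A^N)_{p_0,p_1}| > |(\tilde A^N)_{p_0,p_1}|$ from Lemma \ref{lemma:full}.
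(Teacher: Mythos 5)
Your proof is correct and follows essentially the same route as the paper's: reduce to $\Delta(\cR_N)/|\cT(\cR_N)| \to 0$ via the identity from Equation~\ref{equation:tw}, then invoke Lemma~\ref{lemma:lambda} (whose hypothesis is exactly the one assumed here) to compare the spectral radii of $A$ and $\tilde A$. The paper states this in two sentences; you have merely unpacked the same argument.
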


\begin{proof}
From Lemma \ref{lemma:lambda} we have
\[ \lim_{N \to \infty} \frac{\Delta(\cR_N)}{|\cT(\cR_N)|} = 0. \]
The result follows from Equation \ref{equation:tw}.
\end{proof}

The following result, in a similar spirit,
will be needed to prove Corollary \ref{coro:twin}.

\begin{lemma}
\label{lemma:verts}
Let $\cD \subset \RR^{n-1}$,
where $n \ge 4$.
Assume that $\cD$ contains a
$3\times 3 \times 1 \times \cdots \times 1$ box.
There exists $c < 1$ with the following properties.
Let $M$ be a fixed positive integer.
Let $C_N$ be the number of tilings $\bt$ of $\cR_N$
with fewer than $M$ vertical floors.
Then
\[ \lim_{N \to \infty} \frac{C_N}{c^N |\cT(\cR_N)|} = 0. \]
\end{lemma}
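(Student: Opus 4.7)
The plan is to encode tilings of $\cR_N$ as walks in the domino complex, following the plug/floor decomposition of Section~\ref{section:plug}, and then to apply a strict Perron--Frobenius estimate. By Definition~\ref{defin:adjacency} and \eqref{equation:plugsandfloors}, a tiling $\bt$ of $\cR_N$ is a walk of length $N$ from $\emptyplug$ to $\emptyplug$ in $\cC_{1,\cD}$, and such walks are enumerated by powers of the adjacency matrix $A$. I interpret a \emph{vertical floor} of $\bt$ as an index $k$ for which $f_k = \emptyset$, equivalently $p_{k-1} \sqcup p_k = \cD$; this is the floor type that occurs everywhere in the vertical tiling $\bt_{\nvert,M}$. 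Accordingly, I split
\[
A = A_h + A_v, \qquad (A_v)_{p,q} = \begin{cases} 1, & p \sqcup q = \cD,\\ 0, & \text{otherwise}, \end{cases}
\]
so that $A_v$ is the symmetric permutation matrix corresponding to the involution $p \leftrightarrow \cD \setminus p$, and $A_h \ge 0$ records every non-vertical floor.

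Next, for $\epsilon \in [0,1]$, set $A(\epsilon) = A_h + \epsilon A_v$ and let $\lambda(\epsilon)$ denote its top (Perron) eigenvalue, so $\lambda(1) = \lambda$ from Lemma~\ref{lemma:lambda}. Expanding $(A(\epsilon)^N)_{\emptyplug, \emptyplug}$ as a weighted sum over walks, a walk using $k$ vertical-floor edges is counted with weight $\epsilon^k$; therefore
\[
(A(\epsilon)^N)_{\emptyplug, \emptyplug} = \sum_{k \ge 0} \epsilon^k\, C_{N,k},
\]
where $C_{N,k}$ denotes the number of tilings of $\cR_N$ with exactly $k$ vertical floors. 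All terms are non-negative and $\epsilon^k \ge \epsilon^{M-1}$ for $k < M$ when $\epsilon \in (0,1)$, whence
\[
C_N = \sum_{k < M} C_{N,k} \;\le\; \epsilon^{-(M-1)}\, (A(\epsilon)^N)_{\emptyplug, \emptyplug}.
\]

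The crux is the strict inequality $\lambda(\epsilon) < \lambda$ for every $\epsilon \in [0,1)$. I would restrict to the connected component $\cC \subseteq \cP$ of $\emptyplug$ in the support graph of $A$; all walks contributing to either matrix power above stay inside $\cC$, and the involution $p \mapsto \cD \setminus p$ fixes $\cC$ (since $\emptyplug \in \cC$ forces $\fullplug \in \cC$ through the vertical edge, and similarly for any other $p \in \cC$). Therefore $A_v\big|_{\cC}$ is a nonzero symmetric permutation matrix and $A\big|_{\cC}$ is an irreducible symmetric non-negative matrix, and the standard strict monotonicity of the Perron eigenvalue under an entrywise decrease of an irreducible non-negative matrix gives $\lambda(\epsilon) < \lambda$.

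Finally, for any $\epsilon \in (0,1)$, one has $(A(\epsilon)^N)_{\emptyplug, \emptyplug} \le K(\epsilon)\, \lambda(\epsilon)^N$ (the symmetric non-negative spectral bound) while $|\cT(\cR_N)| \ge \tfrac{C}{2}\, \lambda^N$ for all sufficiently large $N$ by Lemma~\ref{lemma:lambda}; choosing $c$ with $\lambda(\epsilon)/\lambda < c < 1$, we conclude
\[
\frac{C_N}{c^N\,|\cT(\cR_N)|} \;\le\; \mathrm{const}(\epsilon, M) \cdot \Big(\frac{\lambda(\epsilon)}{c\,\lambda}\Big)^{N} \longrightarrow 0.
\]
The main obstacle is establishing the strict Perron--Frobenius inequality $\lambda(\epsilon) < \lambda$; once the irreducibility of $A\big|_{\cC}$ and the non-vanishing of $A_v\big|_{\cC}$ are verified, the inequality is a standard spectral consequence, and the rest of the argument is bookkeeping with non-negative matrices.
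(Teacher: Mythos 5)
Your proof is correct, and it reaches the same conclusion by a genuinely different bookkeeping device from the paper. The paper also splits off the vertical-floor contribution and relies on the same Perron--Frobenius strictness phenomenon: it sets $A_{\sharp}=A(0)$ (your $A_h$), observes $0\le A_{\sharp}\le A$ with strict inequality at the vertical-edge entries, deduces $\rho(A_{\sharp})<c\lambda$ for some $c<1$, and then counts tilings with fewer than $M$ vertical floors by fixing an $M$-tuple of candidate floor positions and bounding each intermediate block by a power of $A_{\sharp}$; this yields $C_N < N^M C_{\sharp}\,c_{-}^N\lambda^N$, where the polynomial factor $N^M$ is dominated by $(c/c_{-})^N$. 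Your version avoids the polynomial factor entirely by \emph{tilting} the transfer matrix: $A(\epsilon)=A_h+\epsilon A_v$ is a generating function in $\epsilon$ for the number of vertical floors, and $C_N\le\epsilon^{-(M-1)}(A(\epsilon)^N)_{\emptyplug,\emptyplug}$ follows from a one-line inequality, reducing everything to $\lambda(\epsilon)<\lambda$. This is a cleaner, more modular argument (a standard Markov-chain tilting idea), at the cost of having to justify the strict eigenvalue drop for \emph{every} $\epsilon\in[0,1)$; but that is the same Perron--Frobenius monotonicity the paper needs at $\epsilon=0$. One small simplification you could make: Lemma~\ref{lemma:full} already shows $(A^N)_{p_0,p_1}>0$ for all pairs and all large $N$, so $A$ is in fact primitive on all of $\cP$; you do not need to restrict to a connected component $\cC$, and the irreducibility hypothesis for strict Perron monotonicity comes for free, together with $\rho(A\vert_{\cC})=\lambda$, which you would otherwise need to verify separately since a priori the Perron eigenvalue of a reducible matrix could live on a different block.
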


\begin{proof}
Let $A$ be the adjacency matrix, as above.
Let $A_{\sharp}$ be the corresponding matrix,
but not counting vertical floors.
We have $|(A_{\sharp})_{i,j}| \le A_{i,j}$ for all $i,j \in \cP$,
with strict inequality for some entries.

Let $\lambda > 0$ be the eigenvalue of $A$ of largest absolute value,
as above. There exists $c < 1$ such that all eigenvalues of $A_{\sharp}$
have absolute value strictly smaller than $c\lambda$.
This is our desired $c$.

Consider an auxiliary $c_{-}$, $c_{-} < c$,
such that all eigenvalues of $A_{\sharp}$
also have absolute value smaller that $c_{-}\lambda$.
Thus, for any $i,j \in \cP$, 
\[ \lim_{N\to\infty}
\frac{(A_{\sharp}^N)_{i,j}}{c_{-}^N\lambda^N} = 0. \]
Thus, there exists a constant $C_{\sharp}$ such that
$|(A_{\sharp}^N)_{i,j}| < C_{\sharp} c_{-}^N\lambda^N$
for all $i,j \in \cP$ and all $N$.

We need an estimate for $C_N$.
The number of $M$-tuples $0 < k_1 < \cdots < k_M < N$
is bounded by $N^M$.
For each such $M$-tuple $(k_1, \ldots, k_M)$,
we count the number of tilings of $\cR_N$
where vertical floors are allowed only in the positions $k_i$.
We first choose floors and plugs in the positions $k_i$ and $k_{i+1}$:
there are a fixed number $K$ of such choices.
We then choose the tiling in each interval:
the initial and final plugs $p_i$ and $p_{i+1}$ are now fixed.
There are
$(A_{\sharp}^{k_{i+1}-k_i})_{p_i,p_{i+1}} <
C_{\sharp} c_{-}^{(k_{i+1}-k_i)}\lambda^{(k_{i+1}-k_i)}$
such tilings.
Thus, $C_N < N^M C_{\sharp} c_{-}^{N}\lambda^{N}$;
for large $N$, $C_N \ll c^{N}\lambda^{N}$, as desired.
\end{proof}

%%%%%%%%%%%%%%%%%%%%%%%%%%%%%%%%%%%%%%%%%%%%%%%%%%%%%%%%%%%%%%%%%%%%%%%

\section{The domino complex $\cC_\cD$}
\label{section:complex}

In this section we complete the construction of
the $2$-complex $\cC_\cD$.
The construction is very similar to the one performed in \cite{regulardisk},
thus we skip some details.
Recall from Section~\ref{section:plug} that tilings of $\cR_N = \cD \times [0,N]$
correspond to walks of length $N$ in $\cC_\cD$
from $\emptyplug \in \cP$ to $\emptyplug$.
We shall now see that $\bt_0 \sim \bt_1$
if and only if the corresponding continuous paths
are homotopic with fixed endpoints.

Consider the graph $\cC_{1,\cD}$ as a $1$-complex.
To  each self loop (always from $\emptyplug$ to itself)
attach the boundary of a M\"obius band.
%to each such $1$-cell.
Otherwise, we attach boundaries of $2$-cells (disks).
The $2$ cells correspond to flips 
as described below:

\begin{figure}[ht]
\centering
\def\svgwidth{120mm}
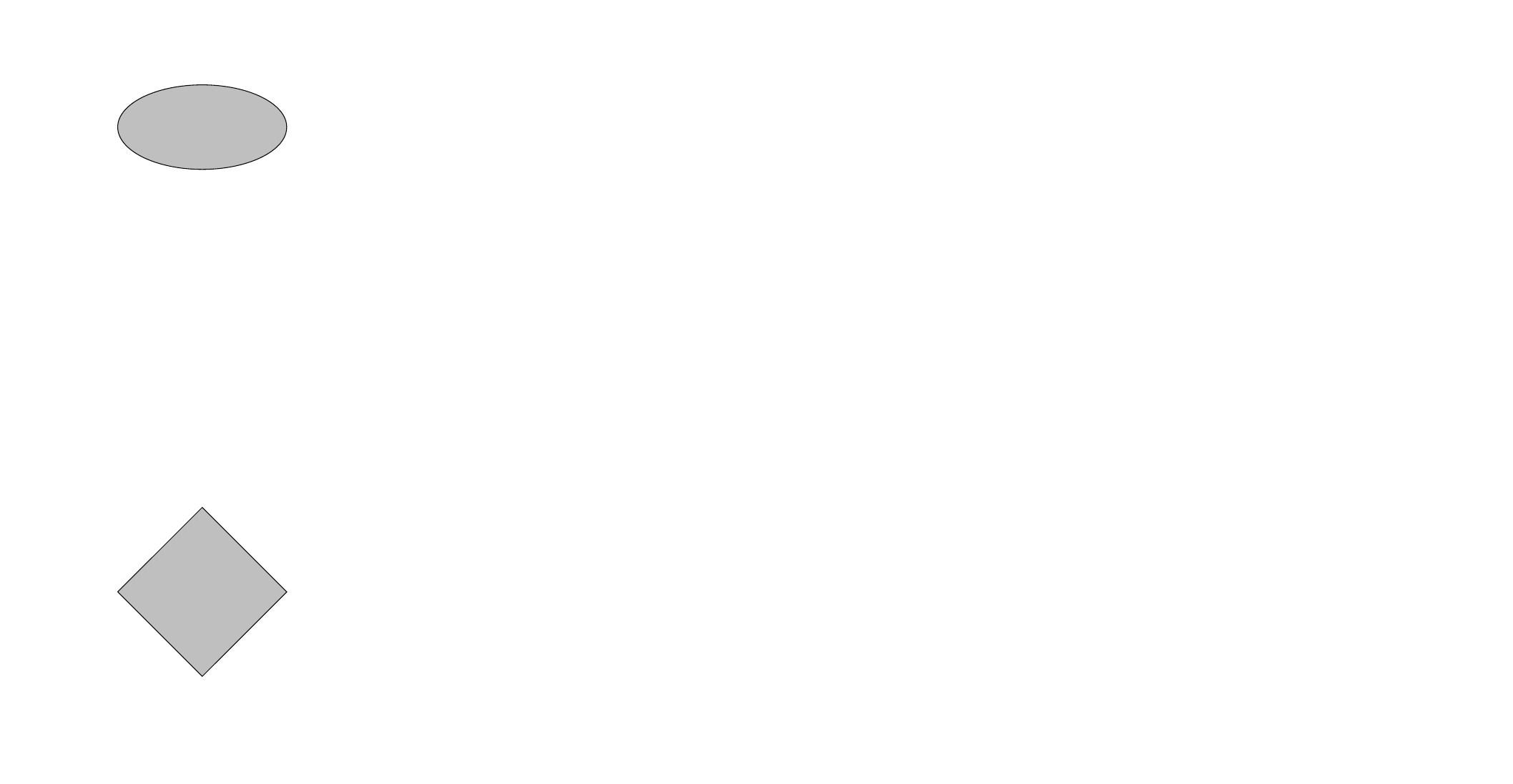
\caption{A flip manifests itself in the complex $\cD$ as a $2$-cell.
The figure shows a horizontal and a vertical flip.}
\label{fig:hvflip}
\end{figure}

First consider horizontal flips.
These join two tilings $\bt_0, \bt_1$ of $\cD_{p_0,p_1}$
(where $p_0, p_1 \in \cP$ are disjoint plugs).
In the complex, $p_0$ and $p_1$ are vertices
and $\bt_0$ and $\bt_1$ are $1$-cells joining them
(in other words, $\bt_0$ and $\bt_1$ are floors).
Attach to the complex a $2$-cell whose oriented boundary
is $\bt_0$ (from $p_0$ to $p_1$) 
followed by $\bt_1$ (from $p_1$ to $p_0$).
Combinatorially, the $2$-cell is a bigon.
Figure \ref{fig:hvflip} shows an example of such a $2$-cell.

Next consider vertical flips.
There are now two floors in play.
In one tiling, we have
$p_0, \bt_0, p_1, \bt_1, p_2$,
where $\bt_0 \in \cT(\cD_{p_0,p_1})$
and $\bt_1 \in \cT(\cD_{p_1,p_2})$.
In the  other we have
$p_0, \tilde\bt_0, \tilde p_1, \tilde\bt_1, p_2$,
where $\tilde\bt_0 \in \cT(\cD_{p_0,\tilde p_1})$
and $\tilde\bt_1 \in \cT(\cD_{\tilde p_1,p_2})$.
The plug $\tilde p_1$ is obtained from $p_1$
by removing two adjacent unit cubes.
These two unit cubes form dominoes
in both $\tilde\bt_0$ and $\tilde\bt_1$.
Again, attach to the complex a $2$-cell whose oriented boundary
is $\bt_0$ (from $p_0$ to $p_1$),
$\bt_1$ (from $p_1$ to $p_2$),
$\tilde\bt_1$ (from $p_2$ to $\tilde p_1$) and
$\tilde\bt_0$ (from $\tilde p_1$ to $p_0$).
Combinatorially, the $2$-cell is a square.
Figure \ref{fig:hvflip} also shows an example of
this other kind of $2$-cell.

By construction, if two tilings $\bt_0, \bt_1$ of $\cR_{0,N;p_0,p_N}$
differ by a flip, the two corresponding (continuous) paths are homotopic.
Indeed, we added a $2$-cell which guarantees just that.
Also, if a tiling $\bt_1$ of $\cR_{0,N+2;p_0,p_N}$
is obtained from a tiling $\bt_0$ of $\cR_{0,N;p_0,p_N}$
by inserting two vertical floors (at any position),
the two paths are trivially homotopic.
The converse statement is similar.
Thus, $G_{\cD}$ is naturally identified with
the fundamental group $\pi_1(\cC_{\cD},\emptyplug)$.

There is a natural surjective map $G_{\cD} \to \ZZ/(2)$
taking a tiling of $\cR_{0,N;p_0,p_N}$ to $N \bmod 2$.
The kernel of this map is $G_{\cD}^{+} < G_{\cD}$,
a normal subgroup of index $2$.

Since the complex is finite,
the group $G_{\cD}$ is finitely presented.
The immediate construction is far too complicated, however.
Later we shall significantly improve this situation.

%%%%%%%%%%%%%%%%%%%%%%%%%%%%%%%%%%%%%%%%%%%%%%%%%%%%%%%%%%%%%%%%%%%%%%%
%%%%%%%%%%%%%%%%%%%%%%%%%%%%%%%%%%%%%%%%%%%%%%%%%%%%%%%%%%%%%%%%%%%%%%%

\section{Hamiltonian regions and generators of $G_{\cD}$}
\label{section:hamiltonian}

The results from this section will be used repeatedly
to prove regularity of regions, or, more generally, to compute the domino group.
We recall the following fact.

\begin{fact}
\label{fact:planar}
Let $\cR \subset \RR^2$ be a planar balanced quadriculated region.
Let $\bt_0, \bt_1$ be tilings of $\cR$.
Then $\bt_0 \approx \bt_1$ if and only if $\Flux(\bt_0) = \Flux(\bt_1)$.
\end{fact}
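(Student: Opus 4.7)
The plan is to adapt Thurston's height function machinery \cite{thurston1990} from the simply connected case to the multiply connected planar setting, as carried out in \cite{saldanhatomei1995}.

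The easy direction is that flips preserve $\Flux$. A flip is supported in a $2\times 2$ sub-square $S \subset \cR$, and the flux around a given hole of $\cR$ is computed by counting, with signs, the dominoes crossing a dual cycle encircling that hole. Such a cycle may always be chosen to avoid $S$, so the count is unaffected by any flip.

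For the converse, I would assign to each tiling $\bt_i$ a Thurston height function $h_i$ on the vertices of $\cR$: along any grid edge the value of $h_i$ changes by $+1$ or $-1$ across a domino edge and by $\mp 3$ across the long axis of a domino, according to the usual black/white rule. In the multiply connected case $h_i$ is a priori only well-defined on the universal cover; its monodromy around each hole is precisely the component of $\Flux(\bt_i)$ associated to that hole. The hypothesis $\Flux(\bt_0) = \Flux(\bt_1)$ therefore implies that the difference $\Delta := h_1 - h_0$ descends to a single-valued $\ZZ$-valued function on $\cR$, constant on each component of $\partial\cR$. Now apply the standard descent: if $\Delta$ is not identically constant, pick an interior vertex $v$ maximizing $\Delta$; Thurston's local lemma then forces $v$ to sit at the center of a $2\times 2$ block in which $\bt_0$ contains two parallel dominoes that admit a flip, and the flip strictly decreases $\Delta(v)$ while leaving $\Delta$ unchanged elsewhere. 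Since $\sum_v \Delta(v) \in \NN$ strictly decreases at each step, the process terminates after finitely many flips, at which point $\bt_0$ has been transformed into $\bt_1$.

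The main obstacle is the topological bookkeeping in the middle step: one has to verify, under a fixed normalization, that the monodromy of $h_i$ around each hole matches the flux through that hole, and that the ``flippable local maximum'' argument still applies near holes of $\cR$ and not just in the simply connected interior. Both points are standard but delicate; since the statement is only used here as a quoted \emph{fact} about planar tilings, I would rely on the careful treatment in \cite{saldanhatomei1995} rather than reproduce the detailed combinatorial verification.
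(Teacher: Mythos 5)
Your sketch is correct and is essentially the argument of the references the paper itself cites for this fact (the paper offers no proof of its own, only ``see \cite{thurston1990, saldanhatomei1995}''): height functions, agreement on $\partial\cR$ once the fluxes match, then Thurston's descent by flipping at a local maximum of the difference. One small imprecision: the monotone quantity should be $\sum_v \max(\Delta(v),0)$ (or $\sum_v|\Delta(v)|$) rather than $\sum_v\Delta(v)$ --- it is non\-negative, and since $\Delta$ is pointwise a multiple of $4$, a flip at a vertex where $\Delta \ge 4$ drops it by exactly $4$ without overshooting --- but this is a trivial fix and does not affect the argument.
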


For the proof of Fact \ref{fact:planar},
see \cite{thurston1990, saldanhatomei1995}.
The general concept of flux will not be required;
we will clarify the meaning in special cases when it comes up.
% the relevant examples.

A cubiculated region $\cD \subset \RR^{n-1}$
is {\em Hamiltonian} if the graph $\cG_{\cD}$ admits a Hamiltonian path.
A fixed Hamiltonian path $\gamma_0 = (s_1, \ldots, s_M)$
is usually assumed; here $M = |\cD|$ and the $s_i$ are unit cubes.

\begin{example}
\label{example:hamiltonianbox}
Any box $\cD = [0,L_1] \times \cdots \times [0,L_{n-1}]$
is Hamiltonian.
We construct an explicit path recursively on $n$.
For $n = 2$, the path in $[0,L_1]$ is given by
$s_i = [i-1,i]$.
Assume a path $\gamma_0 = (s_1, \ldots, s_M)$ given in
$[0,L_1] \times \cdots \times [0,L_{n-1}]$,
where $M = L_1\cdots L_{n-1}$.
We construct a path $\tilde\gamma_0$ in
$[0,L_1] \times \cdots \times [0,L_{n}]$.
The number of unit cubes in the new box is $\tilde M = ML_n$.
For $\tilde k \in \ZZ$, $1 \le \tilde k \le \tilde M$,
let $x_n = \lceil\tilde k/M \rceil$
and $k = \tilde k - (x_n - 1)M$ if $x_n$ is odd
and $k = 1 + x_n M - \tilde k$ if $x_n$ is even.
Define $\tilde s_{\tilde k} = s_k \times [x_n - 1, x_n]$.
% For an appropriate order of the coordinates,
% this construction is consistent with Example \ref{example:22Lt}.
The next example is a special case.
\end{example}

\begin{example}
\label{example:22Lt}
Some small examples deserve special attention,
particularly $\cD = [0,2]^2 \times [0,L]$, $L \ge 2$.
The construction from Example \ref{example:hamiltonianbox} applies,
but a variation is easier to draw.

Consider the quadriculated cylinder
$\hat\cD = (\RR/(4\ZZ)) \times [0,L]$:
the bipartite graphs $\cG_{\cD}$ and $\cG_{\hat\cD}$ are isomorphic.
% consider also the 3D cubiculated manifold
% $\hat\cR_N = \hat\cD \times [0,N]$.
It follows that
the bipartite graphs $\cG_{\cR_N}$ and
$\cG_{\hat\cR_N}$
are also isomorphic (for any $N \in \NN^\ast$),
where $\hat\cR_N = \hat\cD \times [0,N]$
is a 3D cubiculated manifold.

Tilings of $\hat\cR_N$ can be represented as in Figure \ref{fig:hatRN}.
Here, floors are shown sequentially.
The quadriculated cylinder $\hat\cD$ is represented
by a rectangle where the right and left sides are identified
(as in a Mercator map).
Each floor $f_i$ is of the form $f_i = (p_{i-1},f_i^{\ast},p_i)$.
Here, as in Equation \ref{equation:plugsandfloors},
$p_{i-1}, p_i \in \cP_{\hat\cD}$ are disjoint plugs
and $f_i^{\ast}$ is a tiling of $\hat\cD_{p_{i-1},p_1}$.

\begin{figure}[ht]
\begin{center}
\includegraphics[scale=0.275]{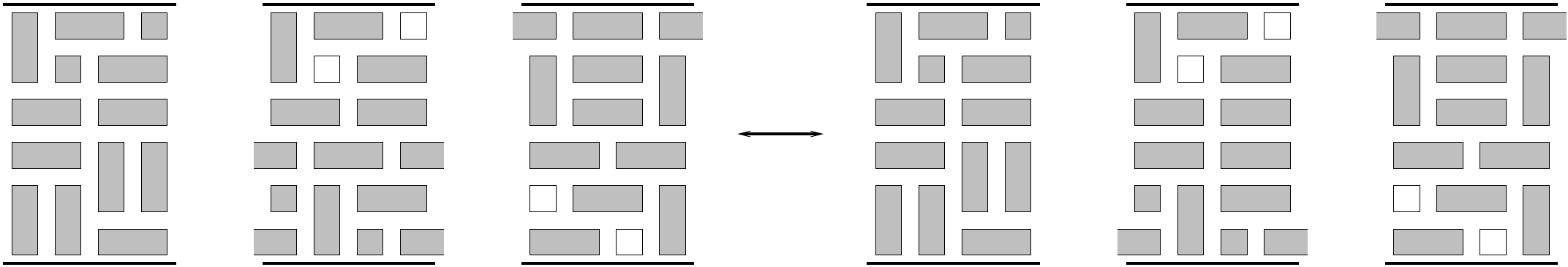}
\end{center}
\caption{Two tilings of $\hat\cR_N$ for $N = 3$ and $L = 6$.
The two tilings differ by a pseudoflip in the fourth row
of the second floor.}
\label{fig:hatRN}
\end{figure}

There exists an important difference between $\cR_N$ and $\hat\cR_N$,
however.  Some flips in $\cR_N$ are represented in $\hat\cR_N$ not in
the usual way, but as {\em pseudoflips}.  In a pseudoflip, a row (of
length $4$) of $\hat\cD$ is rotated by one unit, as shown in Figure
\ref{fig:hatRN}. 
In $\cR_N$, which has higher dimension,
a pseudoflip is an honest flip.

The regions $\cD = [0,2]^2 \times [0,L]$
and $\hat\cD = C_4 \times [0,L]$ are Hamiltonian.
Figure \ref{fig:22Lt} shows Hamiltonian paths in $\tilde\cD$
for $L = 3, 4, 5$.
\end{example}
% this path will be kept fixed during the argument.

Recall that a domino is {\em horizontal} if it is contained
in a single floor and is {\em vertical} otherwise.
We say that a horizontal domino
{\em respects the path} if and only if
it corresponds to an edge along the path;
vertical dominoes always respect the path.
A tiling {\em respects the path} if and only if it consists only
of dominoes which respect the path.

\begin{figure}[ht]
\begin{center}
\includegraphics[scale=0.275]{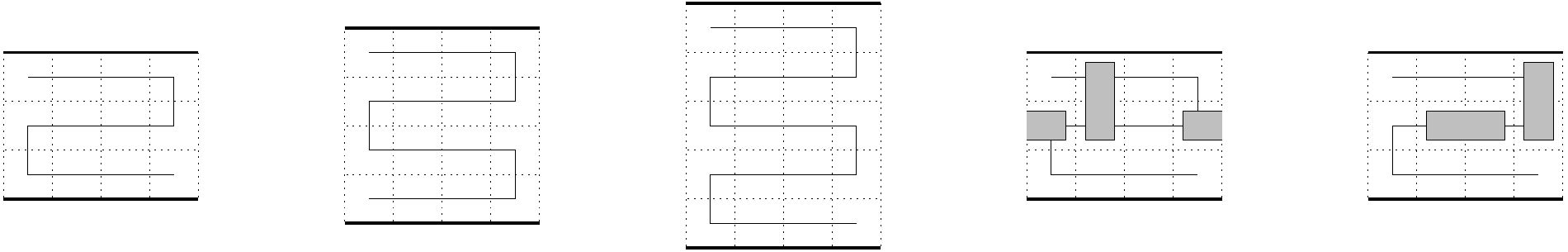}
\end{center}
\caption{The first three diagrams show Hamiltonian paths $\gamma_0$
in the quadriculated surfaces $\hat\cD = C_4 \times [0,L]$
for $L = 3, 4, 5$.
The fourth diagram shows two horizontal dominoes
which do not respect the path.
The fifth diagram shows two horizontal dominoes
which respect the path.}
\label{fig:22Lt}
\end{figure}

Consider a domino $d \subset \cD$
which does not respect the path.
We have $d = s_{i_{d,-}} \cup s_{i_{d,+}} \subset \cD$
with $i_{d,-} + 1 < i_{d,+}$.
The domino $d$ decomposes the path $\gamma_0$ into intervals
(finite sets of integers):
\begin{gather*}
I_{d;-} = \ZZ \cap [1,i_{d,-}-1], \quad 
I_{d;+} = \ZZ \cap [i_{d,+}+1,M], \\
I_{d;0} = \ZZ \cap [i_{d,-}+1,i_{d,+}-1].
\end{gather*}
The set $I_{d;0}$ always has even and positive cardinality.
A plug $p \in \cP$ is {\em compatible} with $d$
if and only if it does not include a square of $d$.
If $p$ is a plug compatible with $d$ define:
\[ \flux(d;p) = (\flux_{-}(d;p),\flux_{0}(d;p),\flux_{+}(d;p)), \quad
\flux_j(d;p) = \sum_{i \in I_{d;j}, s_i \subset p} (-1)^i. \]
Let $H \subset \ZZ^3$ be the perpendicular lattice to $(1,1,1)$;
let $\Phi_d \subset H$ be the finite set of values of $\flux(d;p)$
for $p \in \cP$ compatible with $d$.

\begin{lemma}
\label{lemma:tdp}
Consider a Hamiltonian region $\cD \subset \RR^{n-1}$
with a fixed path $\gamma_0$.
\begin{enumerate}
\item{Let $d \subset \cD$ be a domino which does not respect the path.
Let $p \in \cP$ be a plug compatible with $d$.
Then, for sufficiently large even $N$ there exists
a tiling $\bt_{d;p}$ of $\cR_{2N}$ with the following properties.
There exists a unique domino in $\bt_{d;p}$ which does not respect the path:
$d \times [N-1,N]$.
The plug of $\bt_{d;p}$ at height $N-1$ is $p$.}
\item{Let $d \subset \cD$ be a domino which does not respect the path.
Let $p_0, p_1 \in \cP$ be plugs compatible with $d$.
Let $\bt_{d;p_0}, \bt_{d;p_1}$ be tilings of $\cR_{2N}$
satisfying the conditions of the first item.
If $\flux(d;p_0) = \flux(d;p_1)$ then $\bt_{d;p_0} \approx \bt_{d;p_1}$.
Also, the sequence of flips from $\bt_{d;p_0}$ to $\bt_{d;p_1}$
can be chosen so as to keep the domino $d \times [N-1,N]$ fixed
and all other dominoes respect the path.}
\end{enumerate}
\end{lemma}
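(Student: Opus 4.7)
The plan is to reduce both claims to classical two-dimensional domino tiling theory using the Hamiltonian path structure. Tilings of $\cR_{2N}$ that respect $\gamma_0$ apart from a single domino $d \times [N-1,N]$ correspond bijectively to domino tilings of the two-dimensional region $R^{\ast} = ([1,M] \times [1,2N]) \smallsetminus \{(i_{d,-},N),(i_{d,+},N)\}$: the $i$-th column of the rectangle encodes the stack of cubes $s_i \times [0,2N]$, horizontal 2D dominoes encode path-respecting horizontal 3D dominoes, vertical 2D dominoes encode vertical 3D dominoes, and the two removed cells are taken by the special domino. The region $R^{\ast}$ is planar, balanced and quadriculated, but no longer simply connected.

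For the existence claim in item (1), I would fix the complementary plug $p' = \cD \setminus (p \cup d)$. Since $\cD$, $p$ and $d$ are each balanced, so is $p'$. In the 2D picture, $p$ accounts for vertical dominoes spanning rows $N-1,N$ and $p'$ for vertical dominoes spanning rows $N,N+1$; together with the special domino, these cover row $N$ completely. It remains to tile two balanced planar rectangular strips, $[1,M]\times[1,N-1]$ and $[1,M]\times[N+1,2N]$, in each of which a prescribed set of boundary cells is already occupied by the protruding halves of these vertical dominoes. For any fixed $\cD$ and any compatible $p$, a column-by-column construction with a bounded correction band near the occupied boundary row produces such a tiling once $N$ is large enough; this is a routine two-dimensional tiling exercise.

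For the connectivity claim in item (2), both $\bt_{d;p_0}$ and $\bt_{d;p_1}$ correspond to 2D tilings $\tau_0, \tau_1$ of the common planar region $R^{\ast}$. I claim that the three-component flux $\flux(d;p)$ matches, under a canonical linear identification, the classical 2D flux of the associated tiling of $R^{\ast}$: the three entries $\flux_{-},\flux_{0},\flux_{+}$ correspond to the three natural one-cycles in $R^{\ast}$ arising from the intervals $I_{d;-}, I_{d;0}, I_{d;+}$, and the constraint $\flux_- + \flux_0 + \flux_+ = 0$ (membership in $H$) records the unique linear relation among these cycles in $H_1(R^{\ast})$. Thus $\flux(d;p_0) = \flux(d;p_1)$ implies $\Flux(\tau_0) = \Flux(\tau_1)$, and Fact \ref{fact:planar} provides a sequence of 2D flips joining $\tau_0$ to $\tau_1$. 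Each 2D flip lifts through the correspondence to a 3D flip involving two adjacent cubes along $\gamma_0$ and two consecutive floors; such a flip never touches the special domino (which is not contained in any such $2 \times 2$ block) and preserves path-respect of every other domino throughout the sequence, as required.

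The step I expect to be hardest is the precise verification that the three-component $\flux(d;p)$ defined in the lemma agrees with the two-dimensional flux of the corresponding 2D tiling on $R^{\ast}$ with the correct signs. The formula $\flux_j(d;p) = \sum_{i \in I_{d;j},\,s_i \subset p} (-1)^i$ is manifestly a linear invariant of $p$, but identifying it with the signed count of dominoes crossing canonical cuts that represent generators of $H_1(R^{\ast})$ requires careful local sign bookkeeping along the Hamiltonian path, using that the sign $(-1)^i$ coincides with the bipartite coloring of $\cG_{\cD}$ along $\gamma_0$. Once this identification is set up, the rest of the argument follows cleanly from Fact \ref{fact:planar} and the bijective correspondence between 2D flips and path-preserving 3D flips.
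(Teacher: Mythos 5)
Your proposal is correct and follows essentially the same route as the paper: unfold along the Hamiltonian path to a planar rectangle with two cells removed at height $N$, build the tiling for item (1) by fixing plugs near height $N$ and tiling the two remaining contractible planar pieces, and for item (2) identify $\flux(d;p)$ with the two-dimensional $\Flux$ on the non-simply-connected unfolded region and invoke Fact~\ref{fact:planar}, folding the resulting flip sequence back. The minor differences (your choice of the complementary plug $p'=\cD\setminus(p\cup d)$ at height $N$, and the extra discussion of $H_1$) are presentational and do not change the argument.
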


\begin{proof}
A tiling of $\cR_{2N}$ which respects the path
can be {\em unfolded} to obtain a tiling of
$\tilde\cR_{2N} = [0,M] \times [0,2N]$.
A horizontal domino in $\cR_{2N}$ of the form
$\tilde d \times [j-1,j]$, $\tilde d = s_i \cup s_{i+1} \subset \cD$,
is taken to $[i-1,i+1] \times [j-1,j] \subset \tilde\cR_{2N}$.
A vertical domino in $\cR_{2N}$ of the form
$s_i \times [j-1,j+1]$ 
is taken to $[i-1,i] \times [j-1,j+1] \subset \tilde\cR_N$.

Similarly, consider a tiling $\bt$ of $\cR_{2N}$ such that
there exists a unique domino in $\bt_{d;p}$ which does not respect the path:
$d \times [N-1,N]$.
The tiling $\bt$ can be unfolded to obtain a tiling $\tilde\bt$
of the planar region $\tilde\cR_{2N,d}$:
\begin{gather*}
\tilde\cR_{2N,d} = ([0,M] \times [0,2N]) 
\smallsetminus (s_{-} \cup s_{+}) \subset \RR^2, \\
s_{-} = [i_{d,-}-1,i_{d,-}] \times [N-1,N], \qquad
s_{+} = [i_{d,+}-1,i_{d,+}] \times [N-1,N]. 
\end{gather*}
Conversely, a tiling $\tilde\bt$ of $\tilde\cR_{2N,d}$
can be folded to obtain a tiling $\bt$ of $\cR_{2N}$
with the properties above.

For the first item, the information about plugs
reduces the problem to tiling two similar contractible planar regions.
The first region is obtained from the rectangle $[0,M] \times [0,N-1]$
by removing from row $[0,M] \times [N-2,N-1]$
the unit squares contained in $p$.
The second region is obtained from the rectangle $[0,M] \times [N-1,2N]$
by removing from row $[0,M] \times [N-1,N]$
both the unit squares contained in $p$ and 
the domino $d$.
This is discussed in \cite{regulardisk};
see also \cite{thurston1990}.

For the second item, unfold the tilings $\bt_0$ and $\bt_1$
to obtain tilings $\tilde\bt_0$ and $\tilde\bt_1$ of
the planar region $\tilde\cR_{2N,d}$.
The condition $\flux(d;p_0) = \flux(d;p_1)$
is translated to $\Flux(\tilde\bt_0) = \Flux(\tilde\bt_1)$.
From Fact \ref{fact:planar}, $\tilde\bt_0 \approx \tilde\bt_1$.
Take the sequence of flips for the planar problem
and fold back to obtain the desired sequence of flips
in $\cR_{2N}$.
\end{proof}

Consider a domino $d \subset \cD$ not respecting the path
and $\phi \in \Phi_d \subset H \subset \ZZ^3$.
Choose $p \in \cP$, $p$ compatible with $d$, $\flux(d;p) = \phi$.
Apply the first item of Lemma \ref{lemma:tdp}
to obtain a tiling $\bt_{d;\phi} = \bt_{d;p}$
with the properties listed in that item.
Notice that the second item implies that,
for fixed $N$ (but independently of $p$),
all such tilings are mutually connected by sequences of flips.

\begin{lemma}
\label{lemma:generators}
The family of tilings
$(\bt_{d;\phi})$, $d \subset \cD$ not respecting the path $\gamma_0$,
$\phi \in \Phi_d$,
generates the subgroup $G^{+}_{\cD} < G_{\cD}$.
\end{lemma}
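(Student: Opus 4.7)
The plan is induction on $k(\bt)$, the number of dominoes of a tiling $\bt$ of $\cR_{2N}$ that fail to respect the Hamiltonian path $\gamma_0$. Write $H\subseteq G^+_\cD$ for the subgroup generated by the proposed family; we show $[\bt]\in H$ for every $\bt$. In the base case $k(\bt)=0$, every domino of $\bt$ is vertical or coincides with an edge of $\gamma_0$. Unfolding $\bt$ along $\gamma_0$, exactly as in the proof of Lemma \ref{lemma:tdp}, yields a tiling $\tilde\bt$ of the simply connected planar rectangle $[0,M]\times[0,2N]$. By Thurston's theorem for simply connected planar regions (first bullet of the introduction), $\tilde\bt$ is flip-equivalent to the all-vertical planar tiling; folding back gives $\bt\approx\bt_{\nvert,2N}$, so $[\bt]$ is the identity in $G^+_\cD$.

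For the inductive step, select a non-respecting domino $d_\ast$ of $\bt$ lying in the highest floor $h_\ast$ that contains such a domino, so that every domino of $\bt$ above $h_\ast$ is vertical or respects $\gamma_0$. Let $p_\ast$ be the plug of $\bt$ at height $h_\ast-1$ and put $\phi_\ast=\flux(d_\ast;p_\ast)$. After padding $\bt$ with sufficiently many vertical floors on top (which leaves $[\bt]$ unchanged), the portion $\bt^{\mathrm{top}}$ of $\bt$ lying above height $h_\ast-1$ is a tiling of a cork whose unique non-respecting domino is $d_\ast$ and whose bottom plug is $p_\ast$. This is precisely the setup of the first part of Lemma \ref{lemma:tdp}, and the second part of the same lemma provides a flip sequence, supported above height $h_\ast-1$, converting $\bt^{\mathrm{top}}$ into the top portion of any chosen model $\bt_{d_\ast;p_\ast}$. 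Choosing this model so that it finishes with an all-vertical tail, one obtains
\[
\bt\;\sim\;\bt'\ast\bt_{d_\ast;\phi_\ast},
\]
where $\bt'$ is a tiling of a shorter cylinder inheriting all non-respecting dominoes of $\bt$ except $d_\ast$, so $k(\bt')=k(\bt)-1$. The inductive hypothesis places $[\bt']\in H$, and therefore $[\bt]=[\bt']\cdot[\bt_{d_\ast;\phi_\ast}]\in H$.

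The main technical obstacle is to realize the splitting $\bt\sim\bt'\ast\bt_{d_\ast;\phi_\ast}$ as an honest concatenation within $G^+_\cD$: the plug at the natural splitting height $h_\ast-1$ is $p_\ast\ne\emptyplug$, but concatenation requires plug $\emptyplug$ at the seam. This is handled by further vertical padding together with an auxiliary filling of the cork $\cR_{\cdot,\cdot;p_\ast,\emptyplug}$ produced by Lemma \ref{lemma:fu}, inserted on both sides of the seam so that its two copies cancel; the matching flux $\phi_\ast$ is exactly the invariant that guarantees these auxiliary fillings can be chosen compatibly, so that $\bt'\ast\bt_{d_\ast;\phi_\ast}$ genuinely reconstructs $\bt$ up to $\sim$-equivalence.
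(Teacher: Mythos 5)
Your overall strategy---induction on the number of path-violating dominoes, with the base case handled by unfolding to a rectangle and Thurston's theorem, and the inductive step peeling off one generator $\bt_{d_\ast;\phi_\ast}$ at the top---is the right shape of argument and matches in spirit what the paper invokes (Corollary~8.6 of \cite{regulardisk}). However, there is a genuine gap at the crucial splitting step, and it is precisely in the place you yourself flag as ``the main technical obstacle.'' You propose closing off $\bt^{\mathrm{bot}}$ with an auxiliary filling of the cork from plug $p_\ast$ to $\emptyplug$ ``produced by Lemma~\ref{lemma:fu},'' and you assert that the resulting $\bt'$ inherits exactly the non-respecting dominoes of $\bt$ other than $d_\ast$, so that $k(\bt')=k(\bt)-1$. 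But the construction in Lemma~\ref{lemma:fu} pairs cubes of $p_\ast$ along \emph{shortest paths in $\cG_\cD$}, which are in general not subpaths of the fixed Hamiltonian path $\gamma_0$; the horizontal dominoes it produces typically do \emph{not} respect $\gamma_0$. Consequently the auxiliary filling can raise the count of non-respecting dominoes, the mirrored copy spliced into $\bt^{\mathrm{top}}$ can prevent you from applying Lemma~\ref{lemma:tdp} (which requires a \emph{unique} non-respecting domino), and the induction stalls. What you actually need is a path-respecting (i.e., unfoldable) tiling of the cork $\cR_{\cdot,\cdot;p_\ast,\emptyplug}$; its existence is the content of the planar tileability argument used in the proof of Lemma~\ref{lemma:tdp}(1) (unfold and cite \cite{thurston1990}/\cite{regulardisk}), not of Lemma~\ref{lemma:fu}. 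Once the auxiliary filling is chosen path-respecting, the cancellation $\bt_\ast\ast\bt_\ast^{-1}\sim\bt_{\nvert}$ is just the reflected-rectangle argument after unfolding; this has nothing to do with the value of $\phi_\ast$, so the sentence claiming that ``the matching flux $\phi_\ast$ is exactly the invariant that guarantees these auxiliary fillings can be chosen compatibly'' should be dropped: the role of $\phi_\ast$ is only, via Lemma~\ref{lemma:tdp}(2), to identify the peeled-off piece with the standard generator $\bt_{d_\ast;\phi_\ast}$.

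Two smaller points. First, your choice of $d_\ast$ as ``a'' non-respecting domino in the highest floor containing one does not guarantee that $\bt^{\mathrm{top}}$ has a \emph{unique} non-respecting domino---several such dominoes may coexist in floor $h_\ast$. You either need to argue that you can isolate one (e.g.\ by an intermediate sequence of flips after unfolding the part of the floor away from the chosen $d_\ast$, using Fact~\ref{fact:planar} with matching fluxes), or reorganize the induction so that it allows peeling off all non-respecting dominoes of a single floor in one step. Second, $\phi_\ast$ in Lemma~\ref{lemma:tdp} is computed from the plug \emph{immediately below} the non-respecting domino in $\bt_{d;p}$, whereas after inserting the auxiliary filling and its mirror the relevant plug in $\bt^{\mathrm{top}}$ sits several floors away from $d_\ast$; you should note that Lemma~\ref{lemma:tdp}(2) is applied after sliding that data up to the floor adjacent to $d_\ast$ by flips respecting the path, which again uses Fact~\ref{fact:planar} on the unfolded picture. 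None of this is fatal, but as written these steps are asserted rather than proved, and the invocation of Lemma~\ref{lemma:fu} is simply the wrong tool.
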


\begin{proof}
Recall that $G^{+}_{\cD} < G_{\cD}$ is a normal subgroup of index $2$,
the kernel of the natural surjective map $G_{\cD} \to \ZZ/(2)$
(parity of length of walk).
The proof follows with very slight adaptations the proof of
Corollary 8.6 in \cite{regulardisk}.
\end{proof}

\section{Irregularity of $\cD = [0,2]^3$}
\label{section:small}

We now discuss the smallest non trivial example:
% the case $L = 2$,
see Example \ref{example:222}.

\begin{lemma}
\label{lemma:222}
Let $\cD = [0,2]^3$.
There exists a surjective map $\Tw_{\ZZ}: G_{\cD} \to \ZZ$
such that $\Tw(\bt) = \Tw_{\ZZ}(\bt) \bmod 2$ 
for any tiling $\bt$ of $\cD \times [0,N]$, $N \in \NN^\ast$.
In particular, $\cD$ is not regular.
\end{lemma}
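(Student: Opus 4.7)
The plan is to construct an integer-valued refinement $\Tw_\ZZ$ of $\Tw$ on tilings of $\cR_N$, show it descends to a surjective homomorphism $G_\cD \to \ZZ$, and conclude. The key observation is the factorisation $\cD = [0,2]^3 = [0,2]^2 \times [0,2]$. Writing the coordinates of $\RR^4$ as $(x_1, x_2, x_3, x_4)$, the slab decomposition along $x_3$ gives two regions $\cS_j = \{x_3 \in [j,j+1]\} \cap \cR_N$, $j \in \{0,1\}$, each canonically isomorphic, as a cubiculated region, to the $3$-dimensional cylinder $\cD' \times [0,N]$ with $\cD' = [0,2]^2 \subset \RR^2$. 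The $3$-dimensional theory of \cite{FKMS, segundoartigo} attaches to each $3$D tiling of such a cylinder an integer twist, which I denote $\Tw_\ZZ^{(3)} \in \ZZ$; moreover, $\cD'$ is regular in the $3$D sense so that $G_{\cD'} \cong \ZZ \oplus \ZZ/(2)$ with the $\ZZ$-factor detected by $\Tw_\ZZ^{(3)}$.

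For a tiling $\bt$ of $\cR_N$ with no domino in direction $e_3$, $\bt$ restricts to a pair of $3$D tilings $(\bt_0, \bt_1)$ of the two slabs, and I set
\[
\Tw_\ZZ(\bt) = \Tw_\ZZ^{(3)}(\bt_0) + \Tw_\ZZ^{(3)}(\bt_1).
\]
For a general $\bt$, each $e_3$-domino is interpreted as a matched pair of cubes removed from the two slabs, so each slab becomes a $3$D tiling of a region with external plugs, to which $\Tw_\ZZ^{(3)}$ extends. The easy checks follow: additivity under concatenation; $\Tw_\ZZ(\bt_{\nvert,M}) = 0$ because each slab receives the $3$D vertical tiling, of twist $0$; and compatibility modulo $2$ with $\Tw$, obtained by applying Lemma \ref{lemma:ZZ2} to each slab plus a short check that $e_3$-dominos contribute trivially modulo $2$.

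The principal obstacle is verifying flip invariance. Flips whose $2$D plane avoids $e_3$ take place within a single slab as ordinary $3$D flips, so the $3$D twist of that slab (and hence $\Tw_\ZZ$) is preserved by \cite{FKMS}. Flips in a plane spanned by $e_3$ and one of $e_1, e_2, e_4$ simultaneously modify both slabs, exchanging a pair of $e_i$-dominos at mirror-matched slab positions with a pair of $e_3$-dominos bridging the slabs; if the resulting changes of the $3$D twist in the two slabs are $\delta_0$ and $\delta_1$, I need $\delta_0 + \delta_1 = 0$. This should follow from the reflection symmetry $x_3 \mapsto 2 - x_3$ that exchanges the two slabs, combined with the fact that $\Tw_\ZZ^{(3)}$ reverses sign under mirror reflection of a $3$D tiling; executing this argument requires a careful analysis of the explicit $3$D twist formula from \cite{FKMS} and is the main technical step.

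Once the homomorphism is established, surjectivity is easy. Fix $N$ large enough that there exists a $3$D tiling $\tau$ of $\cD' \times [0,N]$ with $\Tw_\ZZ^{(3)}(\tau) = 1$; such $\tau$ exists because $G_{\cD'}$ contains $\ZZ$. Let $\bt^\ast$ be the tiling of $\cR_N$ whose restriction to $\cS_0$ is $\tau$ and whose restriction to $\cS_1$ is the $3$D vertical tiling; then $\Tw_\ZZ(\bt^\ast) = 1$. Concatenating copies of $\bt^\ast$ realises every integer value in the image, so $\Tw_\ZZ: G_\cD \to \ZZ$ is surjective. Since $\ZZ$ is infinite, $G_\cD$ cannot be $\ZZ/(2) \oplus \ZZ/(2)$, and so $\cD$ is not regular.
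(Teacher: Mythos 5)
Your plan has a fatal factual error at its surjectivity step, and before that a genuine gap at the flip-invariance step, so I do not think it can be completed as sketched.

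The surjectivity argument relies on the claim that $\cD' = [0,2]^2 \subset \RR^2$ is regular in the $3$D sense, with $G_{\cD'}$ containing a $\ZZ$ factor detected by $\Tw_{\ZZ}^{(3)}$, so that some tiling $\tau$ of $\cD' \times [0,N]$ has $\Tw_{\ZZ}^{(3)}(\tau) = 1$. This is false. The $3$D cylinder $\cD' \times [0,N]$ is a $2\times 2\times N$ box, which may equally be regarded as the two-floor region $([0,2]\times[0,N])\times[0,2]$; these are flip-connected (this is the setting of \cite{primeiroartigo}), so every tiling has the same twist as the vertical tiling, namely $0$. As a concrete check, the Kasteleyn matrix of the $3$D box $[0,2]^3$ is $4\times 4$ with $\det K = 9$, which equals the total number of tilings, so all nine tilings already have twist $0$. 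Consequently $\Tw_{\ZZ}^{(3)}$ vanishes identically on closed tilings of $\cD'\times[0,N]$, your $\Tw_{\ZZ}(\bt^\ast) = \Tw_{\ZZ}^{(3)}(\tau) + 0 = 0$, and the surjectivity argument collapses: the proposed slab-sum invariant is $0$ on every tiling whose slabs are plug-free, so it cannot be the nontrivial integer invariant the lemma requires. (Fact~\ref{fact:4L} only gives $3$D-regularity for $[0,4]\times[0,L]$, $L\ge 3$, and provides no foothold for $[0,2]^2$.)

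Independently, the step you yourself flag as the ``main technical step'' — that the two slab-twist increments $\delta_0,\delta_1$ under an $e_3$-bridging flip satisfy $\delta_0 + \delta_1 = 0$ — is not actually proved. The mirror $x_3 \mapsto 2 - x_3$ is a symmetry of the pair of slabs, not a relation asserting that slab $1$ is the reflection of slab $0$, so ``$\Tw_{\ZZ}^{(3)}$ reverses under mirror reflection'' gives nothing about the increments of a single flip. You would also need to define $\Tw_{\ZZ}^{(3)}$ for slab tilings carrying external plugs and show the extension is additive and well-behaved; none of that is addressed. The paper's approach avoids slabs altogether: it replaces $[0,2]^3$ by the graph-isomorphic cylinder $\hat\cD = C_4 \times [0,2]$ and defines a local quantity $\tau(d,s) \in \tfrac14\ZZ$ pairing $C_4$-direction dominoes against disjoint squares, whose sum over the floor/plug decomposition is the desired integer invariant. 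The $C_4$ wrap-around is precisely what gives a nondegenerate $\ZZ$-valued rotation number; no slab decomposition of $[0,2]^3$ enjoys this property.
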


\begin{proof}
%% If we follow the point of view of \cite{FKMS},
%% we get to define $\Tw(\hat\bt) \in \ZZ$ for tilings
%% $\hat\bt \in \cT(\hat \cR_N)$:
%% all we then have to do is verify that a pseudoflip,
%% while is changes the flux,
%% does not alter the twist of a tiling.
%% For the sake of making the argument more self-contained
%% we present a direct construction of $\Tw_{\ZZ}$.
%% The construction of $\Tw_{\ZZ}$ is very similar 
%% to the construction of $\Tw$ in \cite{regulardisk}.

Consider a domino $d$ and a square $s$ contained in
$\hat\cD =  C_4 \times [0,2]$:
we define $\tau(d,s) \in \{ - \frac14, 0, \frac14 \}$
as in Figure \ref{fig:tauhat}.
For other configurations, $\tau(d,s) = 0$.
Thus, $\tau(d,s) \ne 0$ if and only if
$d$ and $s$ are disjoint,
$d \subset \hat\cD$ is in the $C_4$ direction
and a projection onto $C_4$
takes $s$ to a subset of $d$.

\begin{figure}[ht]
\centering
\def\svgwidth{70mm}
%% Creator: Inkscape inkscape 0.92.3, www.inkscape.org
%% PDF/EPS/PS + LaTeX output extension by Johan Engelen, 2010
%% Accompanies image file '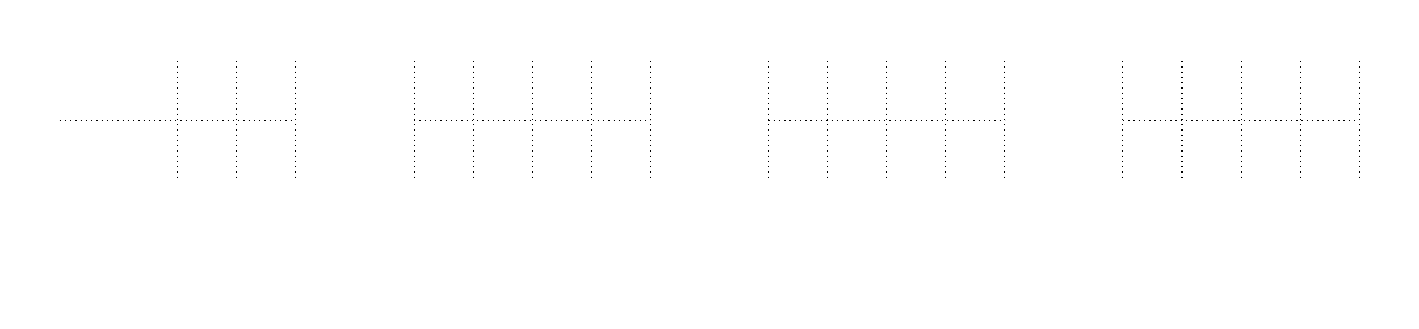' (pdf, eps, ps)
%%
%% To include the image in your LaTeX document, write
%%   \input{<filename>.pdf_tex}
%%  instead of
%%   \includegraphics{<filename>.pdf}
%% To scale the image, write
%%   \def\svgwidth{<desired width>}
%%   \input{<filename>.pdf_tex}
%%  instead of
%%   \includegraphics[width=<desired width>]{<filename>.pdf}
%%
%% Images with a different path to the parent latex file can
%% be accessed with the `import' package (which may need to be
%% installed) using
%%   \usepackage{import}
%% in the preamble, and then including the image with
%%   \import{<path to file>}{<filename>.pdf_tex}
%% Alternatively, one can specify
%%   \graphicspath{{<path to file>/}}
%% 
%% For more information, please see info/svg-inkscape on CTAN:
%%   http://tug.ctan.org/tex-archive/info/svg-inkscape
%%
\begingroup%
  \makeatletter%
  \providecommand\color[2][]{%
    \errmessage{(Inkscape) Color is used for the text in Inkscape, but the package 'color.sty' is not loaded}%
    \renewcommand\color[2][]{}%
  }%
  \providecommand\transparent[1]{%
    \errmessage{(Inkscape) Transparency is used (non-zero) for the text in Inkscape, but the package 'transparent.sty' is not loaded}%
    \renewcommand\transparent[1]{}%
  }%
  \providecommand\rotatebox[2]{#2}%
  \newcommand*\fsize{\dimexpr\f@size pt\relax}%
  \newcommand*\lineheight[1]{\fontsize{\fsize}{#1\fsize}\selectfont}%
  \ifx\svgwidth\undefined%
    \setlength{\unitlength}{681.03807338bp}%
    \ifx\svgscale\undefined%
      \relax%
    \else%
      \setlength{\unitlength}{\unitlength * \real{\svgscale}}%
    \fi%
  \else%
    \setlength{\unitlength}{\svgwidth}%
  \fi%
  \global\let\svgwidth\undefined%
  \global\let\svgscale\undefined%
  \makeatother%
  \begin{picture}(1,0.23152158)%
    \lineheight{1}%
    \setlength\tabcolsep{0pt}%
    \put(0,0){\includegraphics[width=\unitlength,page=1]{tauhat.pdf}}%
    \put(0.1044481,0.0427816){\color[rgb]{0,0,0}\makebox(0,0)[lt]{\lineheight{1.25}\smash{\begin{tabular}[t]{l}$+1$\end{tabular}}}}%
    \put(0.60409261,0.0427816){\color[rgb]{0,0,0}\makebox(0,0)[lt]{\lineheight{1.25}\smash{\begin{tabular}[t]{l}$-1$\end{tabular}}}}%
    \put(0.35427035,0.0427816){\color[rgb]{0,0,0}\makebox(0,0)[lt]{\lineheight{1.25}\smash{\begin{tabular}[t]{l}$-1$\end{tabular}}}}%
    \put(0.85391486,0.0427816){\color[rgb]{0,0,0}\makebox(0,0)[lt]{\lineheight{1.25}\smash{\begin{tabular}[t]{l}$+1$\end{tabular}}}}%
    \put(0,0){\includegraphics[width=\unitlength,page=2]{tauhat.pdf}}%
  \end{picture}%
\endgroup%

\caption{The value of $4\tau(d,s)$ in four examples.
The sign depends on two bits:
the horizontal position of the square and the relative position of the square and domino.  We can give signs in a consistent way only in this small case.}
\label{fig:tauhat}
\end{figure}

Recall that a plug $p \in \cP_{\hat\cD}$
is a balanced subset of $\hat\cD$.
If $p, \tilde p \in \cP_{\hat\cD}$ are disjoint plugs
then $\hat\cD_{p, \tilde p} =
\hat\cD \smallsetminus (p \cup \tilde p)$.
For disjoint plugs $p, \tilde p \in \cP_{\hat\cD}$
and $f \in \cT(\hat\cD_{p, \tilde p})$ define
\begin{equation}
\label{equation:cocycle}
\tau(f,p) = \sum_{d \in f, s \in p} \tau(d,s)
\in \frac14\ZZ; \qquad
\tau(f; p, \tilde p) = \tau(f,\tilde p) - \tau(f,p) \in \frac14 \ZZ.
\end{equation}
Draw a tiling $\bt \in \cT(\cR_N)$ as a sequence of floors,
as in Figure \ref{fig:hatRN}.
A tiling is therefore an alternating sequence of plugs and floors,
\[ \bt = (p_0, \ldots,f_i,p_i,f_{i+1},p_{i+1},\ldots, p_N), \]
with $p_i \in \cP_{\hat\cD}$ (for all $i$),
$p_0 = p_N = \emptyplug$ and
$f_i \in \cT(\hat\cD_{p_{i-1},p_i})$.
Define
\[ \Tw_{\ZZ}(\bt) = \sum_{0 < j \le N}
\tau^u(\floorop_j(\bt); \plug_{j-1}(\bt), \plug_j(\bt)). \] 
It is now not hard to verify that
$\Tw_{\ZZ}(\bt) \in \ZZ$ for any tiling $\bt$
and that $\Tw_{\ZZ}(\bt)$ is invariant under flips and pseudoflips.
\end{proof}

% Each floor is covered by dominoes and squares
% (a square is half a vertical domino).
% In each floor, we consider all pairs $(d,s)$
% where $d$ is a domino contained in the floor
% and $s$ is a square (half a vertical domino),
% also contained in the square.

% For a floor $f$ with lower and upper plugs $p$ and $\tilde p$,
% define

\begin{remark}
\label{remark:2222isolated}
Recall from Example \ref{example:222} that the box $[0,2]^4$
admits $272$ tilings, among them $8$ which are isolated, i.e.,
admit no flip. Figure \ref{fig:2222isolated} shows an example;
the others are obtained by rotation and reflection.
See also the second tiling in Figure \ref{fig:2222}.
With the concept of twist as $\Tw_{\ZZ}$, defined in Lemma \ref{lemma:222},
four of the $8$ isolated tilings have twist $+1$ and four have twist $-1$.

\begin{figure}[ht]
\begin{center}
\includegraphics[scale=0.275]{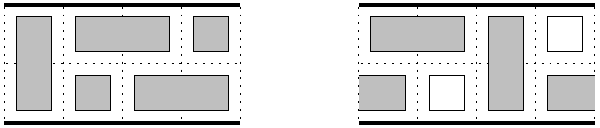}
\end{center}
\caption{An isolated tiling of the box $[0,2]^4$,
represented here as $\hat\cR_2$.}
\label{fig:2222isolated}
\end{figure}

Let $\bt_i$, $i \in \{1,2,3,4\}$, be the four isolated tilings with twist $+1$.
The tilings with twist $-1$ are the reflections $\bt_i^{-1}$.
Let $\bt_{\thin}$ be a tiling of $\cR_1$
(they are all $\approx$-equivalent).
Let $\bt_{\nvert}$ be the vertical tiling of $\cR_6$.
The brute force study of tilings of $\cR_6$ shows that,
for all $i \in \{1,2,3,4\}$,
\[ \bt_1 \ast \bt_{\thin} \ast \bt_{\thin} \ast \bt_i^{-1} \approx
\bt_1 \ast \bt_{\thin} \ast \bt_i^{-1} \ast \bt_{\thin} \approx \bt_{\nvert}.
\]
This implies that
the four tilings $\bt_i$ represent the same element
of the domino group $G_{\cD}$.
Also, $\bt_1 \ast \bt_{\thin} \ast \bt_{\thin} \ast \bt_i$ 
has twist $2$ and belongs to a component of size $98144$.
\end{remark}

\begin{lemma}
\label{lemma:222group}
Let $\cD = [0,2]^3$.
The domino group $G_{\cD}$ is isomorphic to $\ZZ \oplus \ZZ/(2)$.
Any tiling of $[0,2]^4$ which admits no flips
(such as the one in Figure \ref{fig:2222isolated})
is a generator of the $\ZZ$ component.
Any tiling of $[0,2]^3 \times [0,1]$ is a generator
of the $\ZZ/(2)$ component.
\end{lemma}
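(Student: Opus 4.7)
The plan is to construct a homomorphism $\phi: G_{\cD} \to \ZZ \oplus \ZZ/(2)$ by $\phi([\bt]) = (\Tw_{\ZZ}(\bt),\, N \bmod 2)$ for $\bt \in \cT(\cR_N)$, and to prove it is an isomorphism that sends the two prescribed tilings to the standard generators $(1,0)$ and $(0,1)$. Lemma \ref{lemma:222} already furnishes the first coordinate as a well-defined flip invariant, additive under concatenation. The parity map is well defined because $\sim$ only relates tilings of regions of equal height and concatenation adds heights; hence $\phi$ is a group homomorphism.

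Surjectivity and identification of the generators are immediate. By Remark \ref{remark:2222isolated}, any isolated tiling $\bt_\ast$ of $\cR_2 = [0,2]^4$ satisfies $\Tw_{\ZZ}(\bt_\ast) = \pm 1$, so $\phi([\bt_\ast]) = (\pm 1, 0)$. For any $\bt_1 \in \cT(\cR_1)$ the defining sum for $\Tw_{\ZZ}$ reduces to the single term $\tau(f_1;\emptyplug,\emptyplug) = 0$, and $N = 1$, so $\phi([\bt_1]) = (0,1)$. In particular $\phi$ is onto and the two prescribed classes project to the generators of the two summands.

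For injectivity, I would apply Lemma \ref{lemma:generators} using a Hamiltonian path $\gamma_0$ in $\cD = [0,2]^3$ taken from the $L = 2$ instance of Example \ref{example:22Lt}, producing a finite explicit family of generators $\bt_{d;\phi}$ of $G_{\cD}$, indexed by the dominoes $d \subset \cD$ not respecting $\gamma_0$ together with their admissible flux values. Since $\cD$ has only eight unit cubes, this list is short enough to enumerate by hand. For each generator I would invoke Lemma \ref{lemma:tdp} to unfold the relevant cork into a planar tiling problem solvable by Fact \ref{fact:planar}, and thereby exhibit a concrete flip sequence showing $\bt_{d;\phi} \sim [\bt_\ast]^a \ast [\bt_1]^b$ with $(a,b) = \phi([\bt_{d;\phi}])$. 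I would then check the two structural relations: $[\bt_1]^2 = 0$, which is the assertion that $\bt_1 \ast \bt_1$ sits in the large flip-component of $\cR_2$ recorded in Example \ref{example:222}, and commutativity $[\bt_\ast][\bt_1] = [\bt_1][\bt_\ast]$. These relations force $G_{\cD}$ to be abelian, after which $\phi$ becomes an isomorphism of abelian groups.

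The main obstacle lies in this injectivity step: $G_{\cD}$ is \emph{a priori} only the fundamental group of the fairly intricate $2$-complex $\cC_{\cD}$, so a direct analysis of the raw presentation is not feasible. I expect the argument to proceed by combining the planar reduction of Lemma \ref{lemma:tdp} for the bulk of the generators with a small amount of finite case-checking of flip components for the minimal configurations, in the spirit of the equalities asserted in Remark \ref{remark:2222isolated}, in particular to pin down the relation $[\bt_1]^2 = 0$ and the commutativity $[\bt_\ast][\bt_1] = [\bt_1][\bt_\ast]$.
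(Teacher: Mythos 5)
Your proposal follows the same overall skeleton as the paper's proof: build the homomorphism $\phi([\bt]) = (\Tw_{\ZZ}(\bt), N \bmod 2)$, check surjectivity immediately, and reduce injectivity to the finite list of generators $\bt_{d;\phi}$ from Lemma~\ref{lemma:generators}, using the relations $\bt_{\thin}^2 = e$ and $[\bt_\ast, \bt_{\thin}] = e$ supplied by Remark~\ref{remark:2222isolated}. That framework is correct, and the observation that $\phi \circ \psi = \operatorname{id}$ (where $\psi(u,v) = \bt_\ast^u \ast \bt_{\thin}^v$) forces both maps to be isomorphisms once $\psi$ is shown surjective is exactly the paper's logic.

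The genuine gap is in the claim that Lemma~\ref{lemma:tdp} together with Fact~\ref{fact:planar} will ``exhibit a concrete flip sequence showing $\bt_{d;\phi} \sim [\bt_\ast]^a \ast [\bt_1]^b$.'' Those tools do not reach that far. Item 2 of Lemma~\ref{lemma:tdp} (via unfolding to the cut planar region $\tilde\cR_{2N,d}$ and Fact~\ref{fact:planar}) identifies two tilings $\bt_{d;p_0}$ and $\bt_{d;p_1}$ \emph{with the same domino $d$ and the same flux value $\phi$}; it says nothing about relating different pairs $(d,\phi)$ to one another, nor about expressing any $\bt_{d;\phi}$ as a power of $\bt_\ast$ (which has several dominoes off the path and therefore does not unfold at all). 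The unfold-to-a-regular-region strategy that handles the larger boxes in Theorem~\ref{lemma:box4} is unavailable here precisely because the only candidate unfoldings of $[0,2]^3$ are $[0,4]\times[0,2]$, which fails the $L_a, L_b \ge 3$ hypothesis of Fact~\ref{fact:regularrectangle} and so is not regular. This is exactly why $[0,2]^3$ is the exceptional case. The paper closes the gap not by any further planar reduction but by an outright assertion, ``Computations show that each $\bt_{d;\phi}$ is homotopic to a power of $\bt_1$,'' i.e.\ a brute-force finite check over the short list of generators. Your plan should be revised to make that the entire content of the injectivity step, rather than a residual ``small amount of finite case-checking'' for a few degenerate configurations; there is no conceptual short-cut here from the lemmas you cite. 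A small secondary imprecision: $[\bt_{\thin}]^2 = e$ in $G_{\cD}$ is the statement $\bt_{\thin}\ast\bt_{\thin} \sim \bt_{\nvert,2}$, which allows padding by vertical floors and is therefore a priori weaker than $\bt_{\thin}^2$ lying in the big $\approx$-component of $\cR_2$; it is established in the paper from the explicit $\cR_6$ equalities in Remark~\ref{remark:2222isolated}, not from Example~\ref{example:222} alone.
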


\begin{proof}
Consider the homomorphism from $G_{\cD}$ to $\ZZ \oplus \ZZ/(2)$
taking $\bt \in \cT(\cR_N)$ to $(\Tw_{\ZZ}(\bt), N \bmod 2)$:
this homomorphism is clearly surjective.

Let $\bt_1$ and $\bt_{\thin}$ be as in Remark \ref{remark:2222isolated}.
Consider the homomorphism from $\ZZ \oplus \ZZ/(2)$ to $G_{\cD}$ 
taking $(u,v)$ to $\bt_1^u \ast \bt_{\thin}^v$.
This homomorphism is clearly injective,
all we have to do is show that it is surjective.

Consider the generators $\bt_{d;\phi}$ of $G^{+}_{\cD}$
constructed in Lemma \ref{lemma:generators}.
Computations show that each $\bt_{d;\phi}$
is homotopic to a power of $\bt_1$; i.e. $\bt_1 \ast \bt_1 \ast \cdots \ast \bt_1$.  
% thus completing the proof.
\end{proof}

%%%%%%%%%%%%%%%%%%%%%%%%%%%%%%%%%%%%%%%%%%%%%%%%%%%%%%%%%%%%%%%%%%%%%%%

\section{Regularity of $\cD = [0,2]^2 \times [0,L]$, $L \ge 3$}
\label{section:22L}

% The main result of this section is the following:
We now discuss other small examples,
the boxes $\cD = [0,2]^2 \times [0,L]$ for $L \ge 3$, 
as in Example \ref{example:223}.

\begin{lemma}
\label{lemma:22L}
The box $\cD = [0,2]^2 \times [0,L]$ is regular for $L \ge 3$.
\end{lemma}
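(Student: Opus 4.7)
The plan is to identify $G_\cD$ explicitly with $\ZZ/(2) \oplus \ZZ/(2)$, one summand generated by a ``thin'' horizontal tiling of $\cR_1$ and the other by an explicit twist-$1$ tiling. To do this efficiently I would replace $\cD$ with the cubiculated manifold $\hat\cD = C_4 \times [0,L]$ from Example \ref{example:22Lt}, whose bipartite graph is isomorphic to $\cG_\cD$ and which admits the explicit Hamiltonian path $\gamma_0$ shown in Figure \ref{fig:22Lt}. The sets of tilings, plugs, flips/pseudoflips, and hence the domino group $G_\cD$, are unchanged under this identification.

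Next I would invoke Lemma \ref{lemma:generators}: every element of the even-length subgroup $G^+_\cD \subset G_\cD$ is a product of generators $\bt_{d;\phi}$, one for each domino $d \subset \hat\cD$ not respecting $\gamma_0$ and each flux class $\phi \in \Phi_d$ (with the $\phi$-dependence controlled by Lemma \ref{lemma:tdp}). Together with a fixed tiling $\bt_{\thin} \in \cT(\cR_1)$, these generate the full group $G_\cD$. I would also fix a single twist-$1$ representative $\bt_\star \in \cT(\cR_2)$, obtained by embedding the twist-$1$ configuration of Figure \ref{fig:knot} into a $3 \times 3 \times 2$ sub-box of $\cD \times [0,2]$; such a sub-box exists precisely because $L \ge 3$.

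The heart of the proof would be showing that every generator $\bt_{d;\phi}$ is $\sim$-equivalent to $\bt_\star$ when $\Tw(\bt_{d;\phi}) = 1$ and to the trivial element when $\Tw(\bt_{d;\phi}) = 0$. This reduces, via the symmetries of $\hat\cD$ (the $\ZZ/(4)$ rotation of $C_4$, the reflection exchanging the two $C_4$-directions, and the translations along $[0,L]$), to a finite list of representative pairs $(d,\phi)$. Each case would be handled by an explicit flip/pseudoflip sequence, supplemented if necessary by some copies of $\bt_{\nvert,M}$, as is feasible for the small $L$ and small $N$ tabulated in Example \ref{example:223}. Once this is done, the routine identities $\bt_\star \ast \bt_\star \sim \bt_{\nvert,4}$, $\bt_{\thin} \ast \bt_{\thin} \sim \bt_{\nvert,2}$, and commutativity of the two generators follow from the same local flip calculations, giving $G_\cD \simeq \ZZ/(2) \oplus \ZZ/(2)$.

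The main obstacle will be executing the reduction in the previous paragraph uniformly. The case $L = 2$ fails because the $\ZZ$-valued invariant $\Tw_\ZZ$ constructed in the proof of Lemma \ref{lemma:222} survives and blocks the collapse of $G_\cD$ to a finite group. For $L \ge 3$, the point is that the additional room in the $[0,L]$-direction allows a trit performed inside a $3 \times 3 \times 2$ sub-box to change $\Tw_\ZZ$ by $\pm 2$, that is, to preserve the $\ZZ/(2)$-twist of Definition \ref{defin:twist} while killing the $\ZZ$-obstruction. The technical core of the argument is therefore to exhibit such a ``$\Tw_\ZZ$-reducing'' sequence explicitly and to verify that every twist-$0$ generator $\bt_{d;\phi}$ can be routed through it, bringing all twist-$0$ generators to the identity and all twist-$1$ generators to $\bt_\star$.
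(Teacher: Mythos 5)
Your high-level plan is the right one and matches the paper's skeleton: identify $\cG_{\cD}$ with $\cG_{\hat\cD}$ for $\hat\cD = C_4 \times [0,L]$, take the Hamiltonian path of Example~\ref{example:22Lt}, invoke Lemma~\ref{lemma:generators} to reduce to the generators $\bt_{d;\phi}$, and show each is $\sim$-equivalent to the identity or to a fixed twist-$1$ element. However, there is a genuine gap at exactly the place you flag as ``the main obstacle'': you never actually produce the flip/pseudoflip sequences, and the symmetry reduction you propose does not by itself give a finite check that works uniformly in $L$, since the number of path-violating dominoes (and of flux classes $\phi$) grows with $L$. The paper's resolution of this is a different and essential idea that you are missing: \emph{unfold} any $\bt_{d;\phi}$ whose bad domino does not cross the $C_4$ seam to a tiling of $[0,4]\times[0,L]\times[0,N]$ and invoke the known regularity of the 3D rectangle $[0,4]\times[0,L]$ (Fact~\ref{fact:4L}); this disposes of all but the $L$ seam-crossing dominoes, and for those the central flux $\phi_0$ is constrained to $|\phi_0|\le 1$, so only two local cases remain, handled by pseudoflips. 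Without something playing the role of Fact~\ref{fact:4L}, your argument reduces to ``verify a growing list of identities by hand,'' which is not a proof for general $L$.

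There is also a concrete factual error in your construction of the twist-$1$ generator $\bt_\star$. You claim to embed the twist-$1$ configuration of Figure~\ref{fig:knot} into a $3\times3\times2$ sub-box of $\cD \times [0,2]$, ``which exists precisely because $L \ge 3$.'' But $\cD\times[0,2] = [0,2]\times[0,2]\times[0,L]\times[0,2]$ has only one side of length $\ge 3$, so it contains \emph{no} $3\times3\times2$ sub-box for any $L$. The paper instead works in $\cR_4 = \cD\times[0,4]$, where two sides ($L$ and the cylinder height $4$) are $\ge 3$, and uses the explicit tilings $\bt_{\pm1}$ of Figure~\ref{fig:223j} together with the brute-force Fact~\ref{fact:2233} that all twist-$1$ tilings of $\cR_3$ (for $L=3$) are $\approx$-equivalent. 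Your heuristic explanation for the role of $L\ge3$ --- that a trit in a $3\times3\times2$ sub-box kills a residual $\ZZ$-valued obstruction analogous to $\Tw_\ZZ$ of Lemma~\ref{lemma:222} --- is also not how the paper's proof proceeds and would itself require a construction of such an invariant and a proof of its exactness, none of which you supply.
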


We shall need the following facts.  Fact \ref{fact:4L} is a special case of
the first main theorem in \cite{regulardisk}.

\begin{fact}
\label{fact:4L}
The rectangle $\cD_0 = [0,4] \times [0,L]$ is regular for $L \ge 3$.
\end{fact}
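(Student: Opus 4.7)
The plan is to establish the isomorphism $G_{\cD_0} \simeq \ZZ \oplus \ZZ/(2)$ by constructing a homomorphism in one direction and an inverse section, then verifying that the section hits every element of a generating set supplied by Lemma \ref{lemma:generators}.

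First I would define a homomorphism $\Phi \colon G_{\cD_0} \to \ZZ \oplus \ZZ/(2)$ by $\Phi([\bt]) = (\Tw_{\ZZ}(\bt),\ N \bmod 2)$ for $\bt \in \cT(\cR_N)$, where $\Tw_{\ZZ}$ is the three-dimensional integer twist of \cite{FKMS, segundoartigo}. The first coordinate is well defined under $\sim$ because $\Tw_{\ZZ}$ is invariant under flips and vanishes on every vertical tiling $\bt_{\nvert,M}$; the second coordinate is obvious. Surjectivity of $\Phi$ follows from two explicit classes: any thin tiling $\bt_{\thin}$ of $\cR_1$ satisfies $\Phi([\bt_{\thin}]) = (0,1)$, and since $\cD_0$ contains a $3 \times 3$ sub-rectangle (using $L \ge 3$), one may embed the twist-one tiling of Figure \ref{fig:knot} inside $\cR_2$ to produce a tiling $\bt_{+}$ with $\Phi([\bt_{+}]) = (1,0)$.

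The substantive step is injectivity of $\Phi$, equivalently surjectivity of the group-theoretic section $(a,b) \mapsto [\bt_{+}]^{a}\,[\bt_{\thin}]^{b}$. Fix a boustrophedon Hamiltonian path $\gamma_0$ on $\cD_0$ (easily built for a $4 \times L$ rectangle) and apply Lemma \ref{lemma:generators}: it suffices to show that every generator $\bt_{d;\phi}$, with $d$ a shortcut domino across $\gamma_0$ and $\phi \in \Phi_d$, lies in the image of the section. For each such pair $(d,\phi)$ I would exhibit an explicit flip sequence, after concatenating enough vertical padding, connecting $\bt_{d;\phi}$ to $[\bt_{+}]^{k}\,[\bt_{\thin}]^{\varepsilon}$ with $k = \Tw_{\ZZ}(\bt_{d;\phi})$ and $\varepsilon$ the parity of the length of $\bt_{d;\phi}$. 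Since $\Phi$ already detects $(k,\varepsilon)$ and the section realizes every value of $(k,\varepsilon)$, such a reduction immediately implies $\Phi$ is an isomorphism.

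The main obstacle is the case analysis of the generators, which must be carried out uniformly in $L$. The natural approach is induction on $L$: the shortcut dominoes $d$ and their flux values $\phi \in \Phi_d$ form a finite list for each fixed $L$, and the new configurations appearing as $L$ grows can be handled locally, because the width $4$ of $\cD_0$ provides enough room to reroute the flux across $d$ by a bounded number of flips performed in a neighborhood of $d$ (with an extra vertical floor or two of padding as needed). This is precisely the strategy employed for the main theorem of \cite{regulardisk}, of which Fact \ref{fact:4L} is the stated specialization; the role of the width hypothesis there becomes the condition $L \ge 3$ combined with the fixed width $4$.
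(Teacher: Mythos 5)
The paper does not prove Fact~\ref{fact:4L} at all: it is imported as a special case of the first main theorem of \cite{regulardisk}, and the later arguments (Lemma~\ref{lemma:22L}, Theorem~\ref{lemma:box4}) use it as a black box. Your set-up is sound and matches the skeleton of the argument in \cite{regulardisk}: the homomorphism $\Phi([\bt])=(\Tw_{\ZZ}(\bt),\,N\bmod 2)$ is well defined on $G_{\cD_0}$, it is surjective via $\bt_{\thin}$ and an embedded copy of the twist-one tiling of Figure~\ref{fig:knot} (this is the same trick as in Lemma~\ref{lemma:full}), and injectivity correctly reduces, via Lemma~\ref{lemma:generators}, to showing that every generator $\bt_{d;\phi}$ lies in the image of your section. (A small slip: the $\bt_{d;\phi}$ are tilings of $\cR_{2N}$, so your $\varepsilon$ is always $0$; the odd part of $G_{\cD_0}$ has to be handled separately by multiplying by $\bt_{\thin}$, which is routine.)

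The genuine gap is exactly where the content of the Fact lies: the claim that each $\bt_{d;\phi}$ is $\sim$-equivalent to $[\bt_{+}]^{k}$ with $k=\Tw_{\ZZ}(\bt_{d;\phi})$ is only asserted (``I would exhibit an explicit flip sequence'', ``the width $4$ provides enough room to reroute the flux \dots by a bounded number of flips''). No argument is given for why such a rerouting is local, why it is uniform in $L$, or why the proposed induction on $L$ closes: the set $\Phi_d$ of admissible flux values grows with $L$, the shortcut dominoes interact with the boundary of the rectangle, and identifying the class of $\bt_{d;\phi}$ with the correct power of $\bt_{+}$ is precisely the hard part of \cite{regulardisk} --- it is not a bounded local computation, and the present paper's own proofs (e.g.\ of Lemma~\ref{lemma:22L}) lean on Fact~\ref{fact:4L} precisely to avoid redoing it. As written, your argument either silently invokes the main theorem of \cite{regulardisk}, in which case it collapses to the paper's one-line citation, or it leaves the decisive step unproven; to make it self-contained you would need to carry out the generator-by-generator reduction explicitly, e.g.\ via the unfolding and flux analysis of Lemma~\ref{lemma:tdp} together with a concrete treatment of the boundary-crossing dominoes, in the style of the proof of Lemma~\ref{lemma:22L}.
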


\begin{fact}
\label{fact:2233}
Let $\cD = [0,2]^2 \times [0,3]$ and
$\cR_3 = \cD \times [0,3]$.
If $\bt_0$ and $\bt_1$ are tilings of $\cR_3$ with
$\Tw(\bt_0) = \Tw(\bt_1) = 1$ then $\bt_0 \approx \bt_1$.
\end{fact}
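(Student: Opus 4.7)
The plan is to reduce the claim to a finite, explicit verification built on the plug-and-floor formalism of Section~\ref{section:plug}. Example~\ref{example:223} already records that $\cR_3$ has $862156$ tilings, of which $99280$ have twist $1$, so the task is to certify that all $99280$ of them lie in a single flip-class.

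First I would enumerate $\cT(\cR_3)$ by generating every length-$3$ closed path $(p_0 = \emptyplug, \bff_1, p_1, \bff_2, p_2, \bff_3, p_3 = \emptyplug)$ in $\cC_{1,\cD}$, in the spirit of Equation~\ref{equation:plugsandfloors}; this is a straightforward back-tracking whose total count must agree with $(A^3)_{\emptyplug,\emptyplug}$ from Definition~\ref{defin:adjacency}. For each generated tiling I would evaluate $\Tw(\bt) \in \ZZ/(2)$ via Definition~\ref{defin:twist}, thereby isolating the twist-$1$ subset $T_1$. On $T_1$ I would build the flip graph, with two tilings adjacent if and only if they differ by a single flip (twist is preserved by Theorem~\ref{lemma:fliptrit}, so the edges stay inside $T_1$), and run a breadth-first search from an arbitrary $\bt_\ast \in T_1$. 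The Fact is established precisely when the BFS visits every element of $T_1$.

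The main obstacle is logistical rather than conceptual: with roughly $10^6$ tilings one needs a canonical hashable encoding of each bijection $\sigma_\bt$ and efficient local enumeration of flip partners, but neither step requires new theory. I do not see an obvious structural short-cut around the computation: the existence of small twist-$0$ components of sizes $2$ and $16$ reported in Example~\ref{example:223} shows that the flip graph on $\cR_3$ is not uniformly connected within a twist class, so any hands-on reduction of an arbitrary twist-$1$ tiling to a canonical one would require a delicate case analysis which seems no shorter than direct enumeration. In practice, then, Fact~\ref{fact:2233} is most naturally a certified computer verification that matches the component counts already recorded in Example~\ref{example:223}.
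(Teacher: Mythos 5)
Your proposal matches the paper's own treatment: the paper simply states that Fact~\ref{fact:2233} "can be verified by brute force," citing the component count of $99280$ from Example~\ref{example:223}, which is exactly the enumerate-and-BFS certification you describe. One small slip: from Example~\ref{example:223}, the total count for $N=3$ is $762572 + 99280 + 16\cdot 16 + 2\cdot 2 = 862112$, not $862156$, though this does not affect the argument.
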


Fact \ref{fact:2233} can be verified by brute force.
As mentioned in Example \ref{example:223},
all tilings of $\cR_3$ of twist $1$ form a connected component of size $99280$.

\begin{proof}[Proof of Lemma \ref{lemma:22L}]
We follow the construction in the previous section,
particularly Lemma \ref{lemma:generators}.
We use the Hamiltonian paths in Example \ref{example:22Lt}.
Let $\bt_0 = \bt_{\nvert}$ be the vertical tiling of $\cR_2$.
Let $\bt_{\pm 1}$ be the tilings of $\cR_4$ shown in Figure \ref{fig:223j}
(for $L > 3$, the other rows are similar
to the third row in the figure).
These two tilings are of the form 
$\bt_{+1} = \bt_{d;\phi}$ and $\bt_{-1} = \bt_{d;\phi'}$ where
the domino $d$ is the only one which does not respect the path.
We have $\tw(\bt_{+1}) = \tw(\bt_{-1}) = 1 \in \ZZ/(2)$.
It follows from Fact \ref{fact:2233}
(indeed, from Remark \ref{remark:2233})
that $\bt_{+1} \approx \bt_{-1}$.
We prove that $\bt_{+1}$ generates $G^{+}_{\cD} \approx \ZZ/(2)$,
which implies regularity.

\begin{figure}[ht]
\begin{center}
\includegraphics[scale=0.275]{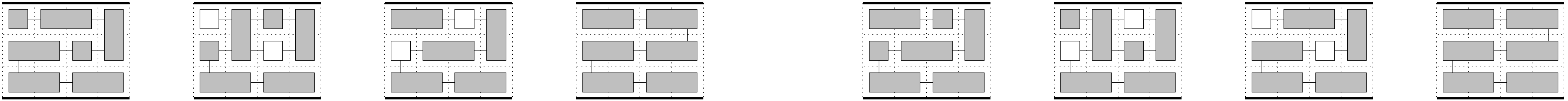}
\end{center}
\caption{Two tilings $\bt_{+1}$ and $\bt_{-1}$
of $\cD \times [0,4]$ for $\cD = [0,2]^2 \times [0,3]$.}
\label{fig:223j}
\end{figure}

Notice that, for given $L$,
this already reduces the proof to a finite
and reasonably small computation.
Indeed, for each domino $d$ not respecting the path
compute $\Phi_d \subset H$.
For each pair $(d,\phi)$, $\phi \in \Phi_d$,
construct a tiling  $\bt_{d;\phi}$.
Compute the twist of these tilings.
For each pair $(d,\phi)$, we must verify that
if $\tw(\bt_{d,\phi}) = s$ then $\bt_{d,\phi} \sim \bt_s$.

We now address the general case $L \ge 3$.
If the domino $d$ does not cross the sides of the rectangle,
the other dominoes will also not cross (they respect the path).
We may therefore consider $\bt_{d;\phi}$ to be a tiling of
$[0,4]\times[0,L]\times[0,N]$.
With this interpretation, we are fully in the scenario
of \cite{regulardisk}, and we know that $[0,4]\times[0,L]$ is regular:
this is Fact \ref{fact:4L}.
We stress that \textit{regularity} in the previous sentence
means \textit{regularity in the 3d sense}.
In other words, let $\cD_0$ be the quadriculated disk
$\cD_0 = [0,4] \times [0,L]$;
let $G_{\cD_0}$ be the domino group of $\cD_0$
(as defined in \cite{regulardisk}):
we have $G_{\cD_0} \approx \ZZ \oplus \ZZ/(2)$.
Thus, regularity of $\cD_0$ implies that $\bt_{d;\phi}$ is homotopic
(still with basis $\cD_0 = [0,4]\times[0,L]$)
to a product of copies of $\bt_{+1}$ and $\bt_{-1}$.
This in turn implies (now with basis $[0,2]^2 \times [0,L]$)
that $\bt_{d,\phi}$ is homotopic to a product of copies
of $\bt_{+1}$ and $\bt_{-1}$.
With basis $[0,2]^2 \times [0,L]$,
$\bt_{+1}$ and $\bt_{-1}$ are homotopic to each other
and both have degree $2$ (from Fact \ref{fact:2233}).
Thus, $\bt_{d,\phi}$ is homotopic to either $\bt_1$ or $\bt_0$.
In other words, $\bt_{d,\phi} \sim \bt_s$ where $s = \tw(\bt_{d,\phi})$.
We are then left with checking the $L$ horizontal dominoes
which cross the side of the rectangle.
Figure \ref{fig:223x} shows these dominoes for $L = 3$.

\begin{figure}[ht]
\begin{center}
\includegraphics[scale=0.275]{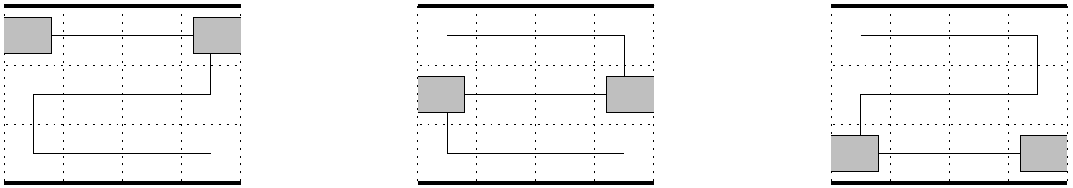}
\end{center}
\caption{Three dominoes which cross the side of the rectangle.}
\label{fig:223x}
\end{figure}

Notice that for these dominoes,
the central interval $I_{d;0}$ has exactly two elements
and therefore $|\phi_0| \le 1$, $\phi_0 = \flux_0(d;p)$.
We first address the case $\phi_0 = 0$.
In this case we may assume that the two elements of $I_{d;0}$ 
are covered by a domino in $\bt_{d;\phi}$
(we are using Fact \ref{fact:planar} here),
as in the first tiling of Figure \ref{fig:223y}.
A pseudoflip then takes $\bt_{d;\phi}$ to 
a tiling $\bt$ which everywhere respects the path.
We thus have $\bt_{d;\phi} \approx \bt \sim \bt_0$,
taking care of this case.

\begin{figure}[ht]
\begin{center}
\includegraphics[scale=0.275]{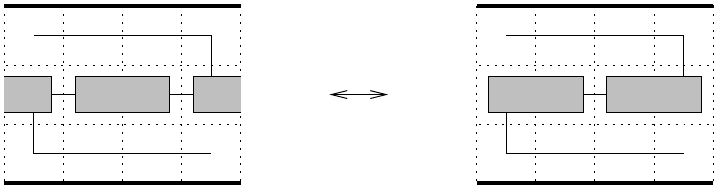}
\end{center}
\caption{The case $\phi_0 = 0$.}
\label{fig:223y}
\end{figure}

\begin{figure}[ht]
\begin{center}
\includegraphics[scale=0.275]{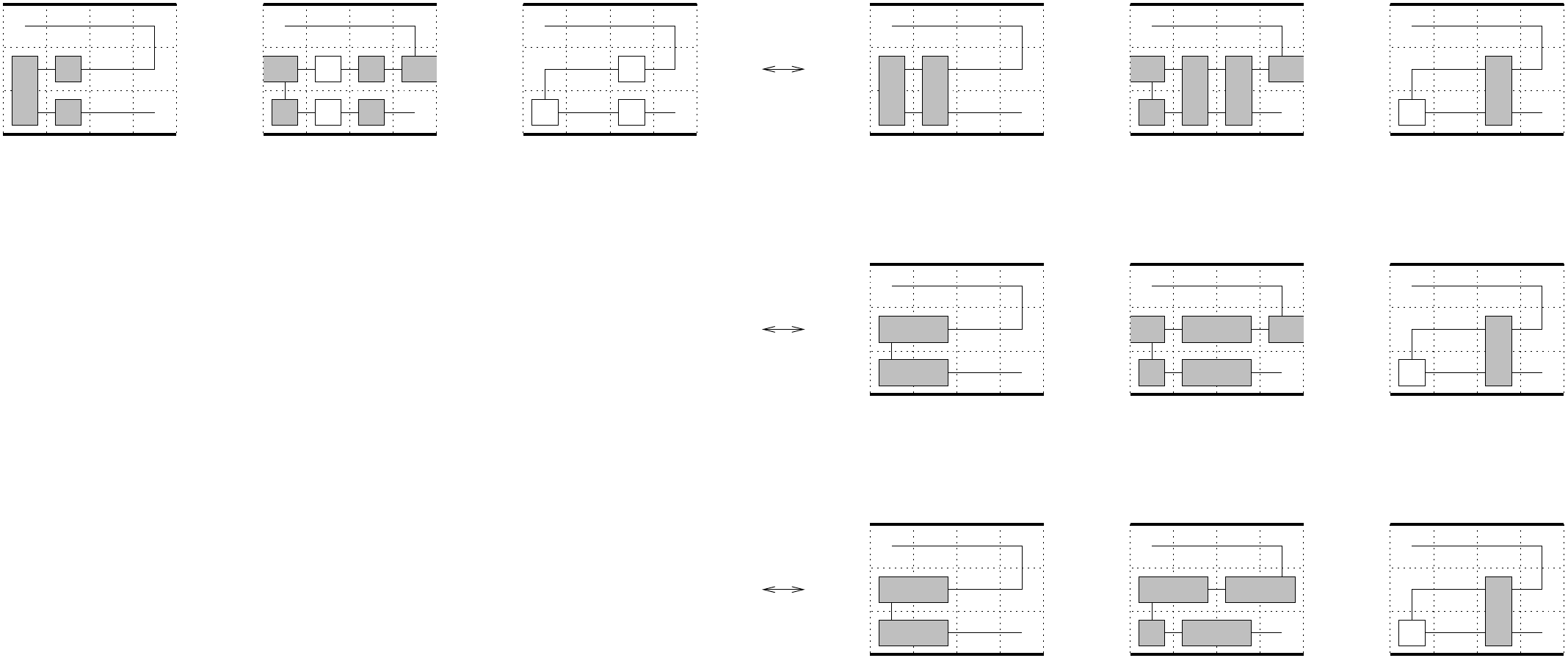}
\end{center}
\caption{The case $|\phi_0| = 1$.}
\label{fig:223z}
\end{figure}

\begin{figure}[ht]
\begin{center}
\includegraphics[scale=0.275]{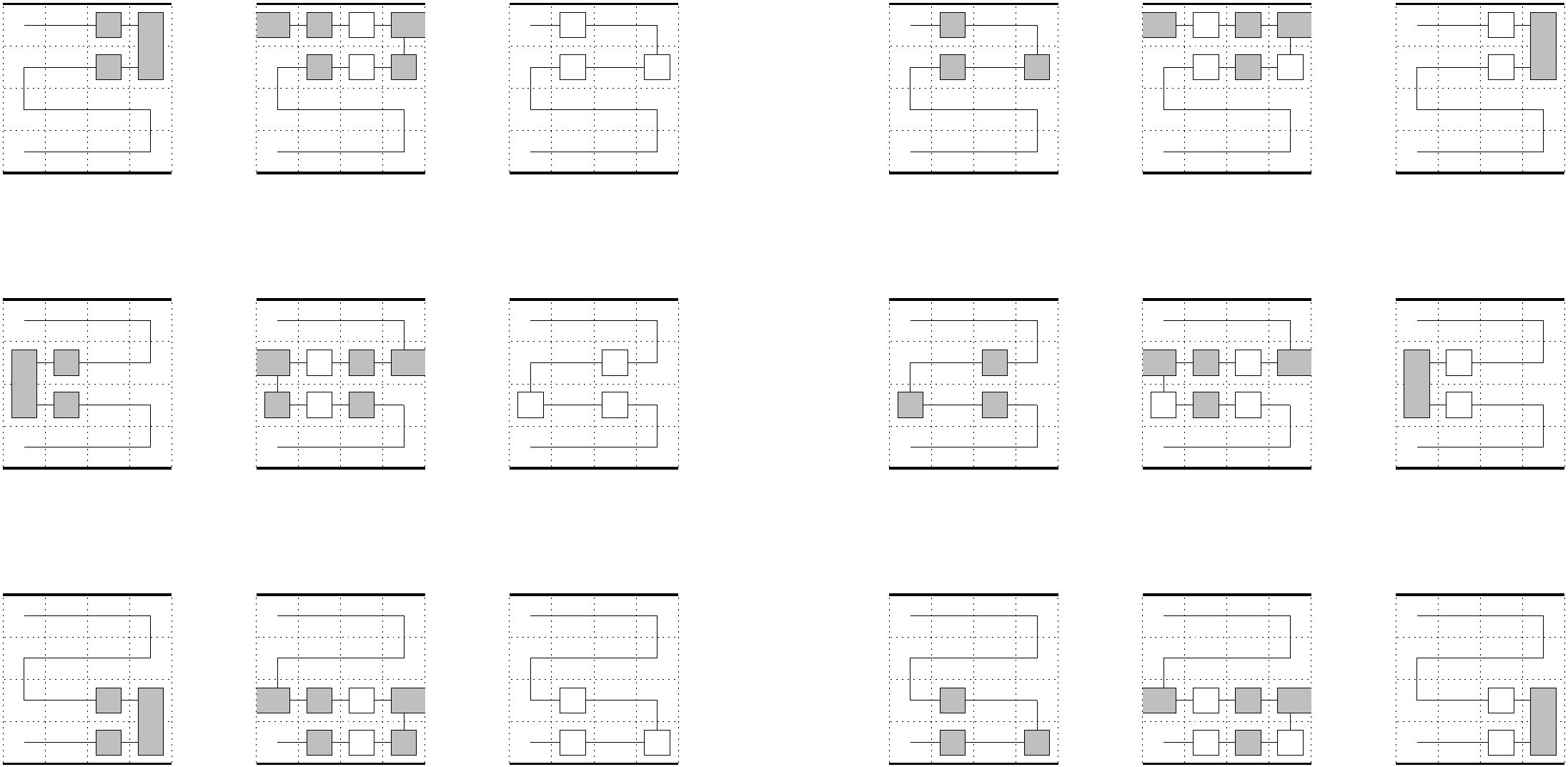}
\end{center}
\caption{The case $|\phi_0| = 1$, other configurations.}
\label{fig:224z}
\end{figure}

For the case $|\phi_0| = 1$, we may assume without loss of generality
that $\phi_0 \phi_{+} < 0$ (and $\gamma_0$ moves from top to bottom).
We may therefore assume that we have a configuration
similar to the one in Figure \ref{fig:223z} 
(other dominoes respecting the path are not shown);
different configurations are minor variations:
we show them for $L = 4$ in Figure \ref{fig:224z}.
A sequence of flips (and pseudoflips)
takes us to a tiling $\bt$ of $[0,4]\times[0,L]\times[0,N]$.
As in the previous case, $\bt$ is equivalent
to a product of copies of $\bt_{+1}$ and $\bt_{-1}$,
completing this last case and the proof of the lemma.
\end{proof}

%%%%%%%%%%%%%%%%%%%%%%%%%%%%%%%%%%%%%%%%%%%%%%%%%%%%%%%%%%%%%%%%%%%%%%%

\section{Regularity of boxes for $n = 4$}
\label{section:box4}

In this section we prove regularity for almost all boxes in $n = 4$.  
We shall need the following result which appears in \cite{regulardisk}.

\begin{fact}
\label{fact:regularrectangle}
Let $\tilde\cD = [0,L_a]\times[0,L_b] \subset \RR^2$ be a rectangle
with $L_a, L_b \in \NN^\ast$, $L_a, L_b \ge 3$, $L_aL_b$ even.
Then the rectangle $\tilde\cD$ is regular.
\end{fact}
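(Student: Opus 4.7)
The plan is to mimic the strategy of Lemma~\ref{lemma:22L}, adapted to a two-dimensional base. First I fix the boustrophedon Hamiltonian path $\gamma_0$ on $\tilde\cD = [0,L_a]\times[0,L_b]$ from Example~\ref{example:hamiltonianbox}, traversing the rectangle row by row. Because $L_a, L_b \ge 3$, the rectangle contains a $3\times 3$ sub-block, so the twist-knot construction of Figure~\ref{fig:knot} (placed inside a $3\times 3\times 2$ sub-box of $\tilde\cD \times [0,2]$, with vertical dominoes elsewhere) yields a tiling $\bt_{\star} \in \cT(\tilde\cD \times [0,2])$ with $\Tw_{\ZZ}(\bt_{\star}) = 1$. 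The goal is to show $G_{\tilde\cD} \cong \ZZ \oplus \ZZ/(2)$, with the $\ZZ$-factor generated by $\bt_{\star}$ and the $\ZZ/(2)$-factor by the thin vertical tiling $\bt_{\nvert,1}$ of $\tilde\cD\times[0,1]$.

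The homomorphism $\varphi \colon \ZZ \oplus \ZZ/(2) \to G_{\tilde\cD}$, $(u,v) \mapsto \bt_{\star}^{u} \ast \bt_{\nvert,1}^{v}$, is injective because the pair $(\Tw_{\ZZ}(\bt),\, N \bmod 2)$ is a flip-invariant that separates the factors. Surjectivity is the substantive claim: by Lemma~\ref{lemma:generators} it suffices to prove that every generator $\bt_{d;\phi}$, with $d \subset \tilde\cD$ a domino not respecting $\gamma_0$ and $\phi \in \Phi_d$, satisfies $\bt_{d;\phi} \sim \bt_{\star}^{\Tw_{\ZZ}(\bt_{d;\phi})}$.

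I would split the offending dominoes $d$ into two types. Type I: $d$ is contained in a sub-rectangle of the form $[0,4]\times[0,L_b]$ or $[0,L_a]\times[0,4]$ together with the segment of $\gamma_0$ crossed by $d$. For these I arrange $\bt_{d;\phi}$ to agree with the vertical tiling outside the sub-rectangle and invoke Fact~\ref{fact:4L}: the homotopy inside the sub-rectangle expresses $\bt_{d;\phi}$ as a product of copies of the $4\times L_b$ twist generator, which equals $\bt_{\star}$ up to a homotopy in $\cC_{\tilde\cD}$. Type II: $d$ lies near the short edges of $\tilde\cD$ or jumps across a $\gamma_0$-turn near a corner, so no sub-rectangle argument applies directly. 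Here I imitate the end of Lemma~\ref{lemma:22L}: split on the central flux coordinate $\phi_0 \in \{-1,0,+1\}$; when $\phi_0 = 0$, a single flip (as in Figure~\ref{fig:223y}) produces a fully path-respecting tiling, homotopic to the vertical tiling and thus to the identity in $G_{\tilde\cD}$; when $|\phi_0| = 1$, unfold via Fact~\ref{fact:planar} to a planar region and deform $\bt_{d;\phi}$ into a canonical normal form whose twist coincides with $\Tw_{\ZZ}(\bt_{\star}^{k})$ for the appropriate $k$.

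The main obstacle is the Type~II boundary case-work. Although the number of shapes of $d$ near the boundary is $O(L_a + L_b)$ up to translations along $\gamma_0$, one needs a uniform recipe valid for all $L_a, L_b \ge 3$ with $L_aL_b$ even simultaneously, and handling each compatible $\phi$ with the correct bookkeeping of the flux coordinates and of the parity of the $\gamma_0$-turns. The planar flux-equivalence of Fact~\ref{fact:planar} is the key tool making the case-work uniform: once a configuration is unfolded, every choice becomes a planar flip problem whose equivalence class depends only on flux, so consistency across boundary cases reduces to verifying that the twist invariant $\Tw_{\ZZ}$ responds correctly to the folding. I would expect the bulk of a careful write-up to consist of this bookkeeping rather than of any deep structural argument.
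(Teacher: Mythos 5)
This statement is not proved in the paper at all: the authors explicitly introduce it with ``We shall need the following result which appears in \cite{regulardisk}'' and then use it as a black box. There is therefore no in-paper proof to compare against; the actual proof is the (substantial) first main theorem of \cite{regulardisk}, which classifies which balanced simply-connected quadriculated disks are regular.

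As a free-standing argument, your sketch has a genuine gap in the ``Type I'' step. You want to arrange $\bt_{d;\phi}$ to agree with the vertical tiling outside a sub-rectangle $[0,4]\times[0,L_b]$ and then invoke Fact~\ref{fact:4L} inside. But the Hamiltonian path $\gamma_0$ of $\tilde\cD$ from Example~\ref{example:hamiltonianbox} snakes through full rows of length $L_a$; its restriction to the first four columns is not a Hamiltonian path of the sub-rectangle, it enters and leaves through the wall $x_1=4$. Consequently a generator $\bt_{d;\phi}$ (all of whose dominoes except $d\times[N-1,N]$ respect $\gamma_0$) typically \emph{does} contain horizontal $e_1$-dominoes crossing that wall and is not flip-equivalent, by any argument you give, to a tiling that is vertical outside the sub-rectangle. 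You are implicitly assuming the localization you need. Compare this with how Lemma~\ref{lemma:22L} and Theorem~\ref{lemma:box4} get away with an analogous move: there the base $\cD$ has an extra dimension that can be \emph{unfolded} (Example~\ref{example:foldbox}), and the property ``respects the path $\gamma_0$'' is precisely what guarantees the other dominoes lift along the unfolding. For a two-dimensional base the only unfolding target is the degenerate box $[0,L_aL_b]$, to which the non-respecting dominoes do not lift, so that machinery is unavailable and the reduction to Fact~\ref{fact:4L} fails. The Type~II bookkeeping is also left entirely open, but the Type~I step is the more serious problem: it is not a write-up issue, the reduction itself does not go through.
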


Thus, if $N$ is even,
a tiling $\bt$ of $\tilde\cD \times [0,N]$ 
is homotopic (i.e., $\sim$-equivalent)
to a product of finitely many copies of 
the tilings $\bt_{+1}$ and $\bt_{-1}$ of $\tilde\cD \times [0,4]$,
shown in Figure \ref{fig:tpm} for $L_a = L_b = 4$.
In general, the upper $2 \times 3$ rectangle is as shown
and the rest is filled with dominoes respecting the path.

\begin{figure}[ht]
\begin{center}
\includegraphics[scale=0.275]{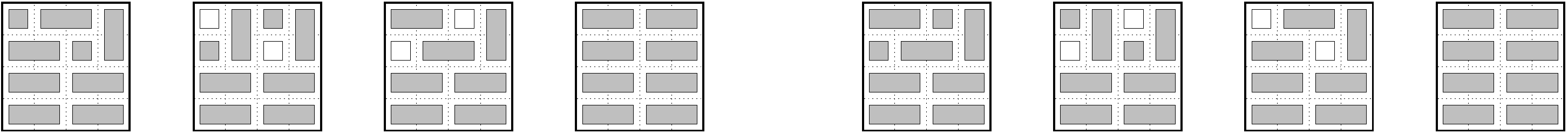}
\end{center}
\caption{The tilings $\bt_{\pm 1}$ for $L_a = L_b = 4$.}
\label{fig:tpm}
\end{figure}

Consider two quadriculated or cubiculated regions $\cR_1$, $\cR_2$
with Hamiltonian paths $\gamma_1$ and $\gamma_2$.
We say that $\cR_2$ is obtained by {\em folding} $\cR_1$
if and only if
$|\cR_1| = |\cR_2|$ and, for all $k_0, k_1$,
if $\gamma_1(k_0)$ and $\gamma_1(k_1)$ are adjacent then
$\gamma_2(k_0)$ and $\gamma_2(k_1)$ are also adjacent.
We also say that $\cR_1$ is obtained from $\cR_2$ by {\em unfolding}.
A trivial example is that a Hamiltonian region is obtained
by folding the path itself.
The following example will be used more than once.

\begin{example}
\label{example:foldbox}
The box $\cR_2$ below (of dimension $n$)
is obtained by folding $\cR_1$ (of dimension $n-1$):
\begin{align*}
\cR_1 &= [0,L_1] \times \cdots
\times [0,L_{k}L_{k+1}] \times \cdots \times [0,L_n],\\
\cR_2 &= [0,L_1] \times \cdots
\times [0,L_{k}] \times [0,L_{k+1}] \times \cdots \times [0,L_n],
\end{align*}
The Hamiltonian paths $\gamma_i$ are defined
as in Example \ref{example:hamiltonianbox}.
\end{example}

If $\cR_2$ is obtained from $\cR_1$ by folding,
any tiling of $\cR_1$ can be folded to define a tiling of $\cR_2$.
More precisely, if $\gamma_1(k_0)$ and $\gamma_1(k_1)$ form a domino
(contained in $\cR_1$) then
$\gamma_2(k_0)$ and $\gamma_2(k_1)$ also form a domino 
(contained in $\cR_2$).
The converse may be true or not:
a tiling of $\cR_2$ may or may not admit unfolding to $\cR_1$.

\begin{theo}
\label{lemma:box4}
Consider a cubiculated box $\cD = [0,L_1] \times [0,L_2] \times [0,L_3]$,
$L_i \ge 2$, $L_1L_2L_3$ even.
The box $\cD$ is regular {\em except} if $L_1 = L_2 = L_3 = 2$.
\end{theo}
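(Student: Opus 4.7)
The plan is to reduce regularity to Lemma \ref{lemma:22L} and Fact \ref{fact:regularrectangle} via the folding construction of Example \ref{example:foldbox}, combined with the enumeration of generators of $G^+_\cD$ provided by Lemma \ref{lemma:generators}. Since regularity is invariant under permutation of coordinates, assume $L_1 \ge L_2 \ge L_3 \ge 2$. The case $L_2 = L_3 = 2$ forces $L_1 \ge 3$ by the exclusion of $(2,2,2)$, and is exactly Lemma \ref{lemma:22L}; so it remains to treat $L_1 \ge L_2 \ge 3$, $L_3 \ge 2$, which I handle below.

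In this regime, view $\cD = [0, L_1] \times [0, L_2] \times [0, L_3]$ as the folding, as in Example \ref{example:foldbox} applied with $k = 2$, of the planar rectangle $\tilde\cD = [0, L_1] \times [0, L_2 L_3]$. Equip both regions with the Hamiltonian path $\gamma_0$ of Example \ref{example:hamiltonianbox}: in $\tilde\cD$ this is the standard zigzag snake and in $\cD$ it is the three-fold recursive path, with cells identified by their common path-index. The rectangle $\tilde\cD$ has both side lengths at least $3$ and area $L_1 L_2 L_3$ even, so Fact \ref{fact:regularrectangle} gives $G_{\tilde\cD} \simeq \ZZ/(2) \oplus \ZZ/(2)$. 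Set $\bt_0 = \bt_{\nvert}$, the vertical tiling of $\cR_2$, and fix reference tilings $\bt_{\pm 1}$ of $\cR_4$ of the form $\bt_{d_\ast;\phi}$, $\bt_{d_\ast;\phi'}$ where $d_\ast$ is an $x_1$-direction domino (hence also a domino of $\tilde\cD$) with $\tw(\bt_{\pm 1}) = 1 \in \ZZ/(2)$; regularity of $\tilde\cD$ applied inside $\tilde\cD \times [0, 4]$ and folded back yields $\bt_{+1} \sim \bt_{-1}$.

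By Lemma \ref{lemma:generators} it suffices to show that every generator $\bt_{d;\phi}$ of $G^+_\cD$ satisfies $\bt_{d;\phi} \sim \bt_{+1}^{\tw(\bt_{d;\phi})}$. Call the offending domino $d$ \emph{interior} if its two unit cubes are already adjacent in $\tilde\cD$ and a \emph{fold domino} otherwise; the fold dominoes are precisely the $x_3$-direction dominoes whose two cubes lie in non-adjacent rows of $\tilde\cD$. If $d$ is interior, then every domino of $\bt_{d;\phi}$ (namely $d$ and the dominoes respecting $\gamma_0$, which are common to $\cD$ and $\tilde\cD$) is also a domino of $\tilde\cD$, so $\bt_{d;\phi}$ reads as a tiling of $\tilde\cD \times [0, 2N]$. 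Regularity of $\tilde\cD$ then produces a sequence of flips inside $\tilde\cD \times [0, 2N]$ identifying $\bt_{d;\phi}$ with $\bt_{+1}^{\tw(\bt_{d;\phi})}$, and folding the sequence back into $\cR_{2N}$ (every flip of $\tilde\cD \times [0, 2N]$ is still a flip of $\cR_{2N}$) yields the desired equivalence.

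The main obstacle, as in the proof of Lemma \ref{lemma:22L}, is the fold dominoes. The plan is to mimic that endgame: for each fold-domino $d$, exploit the extra adjacencies of $\cD$ over $\tilde\cD$ and the ambient freedom in the $[0, 2N]$ direction (which, just as in Example \ref{example:22Lt}, turns would-be pseudoflips across the unfolding seam into honest flips in ambient dimension $n \ge 4$) to produce a bounded sequence of flips converting $\bt_{d;\phi}$ into a tiling whose offending domino is either absent (a tiling everywhere respecting $\gamma_0$, hence $\sim$-equivalent to $\bt_0$) or interior (and handled by the previous paragraph). Twist is preserved at every step, so the endpoint is $\sim$-equivalent to $\bt_{+1}^{\tw(\bt_{d;\phi})}$. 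The hard part is exhibiting a uniform family of such reduction sequences covering every fold-domino type for every admissible $(L_1, L_2, L_3)$ with $L_2 \ge 3$; this reduces the theorem to a finite, in principle computer-verifiable, case analysis per fold-domino orbit, and is where the bulk of the work will lie.
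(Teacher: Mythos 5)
The proposal correctly sets up the framework (folding, Hamiltonian paths, the generators $\bt_{d;\phi}$ of Lemma~\ref{lemma:generators}, Fact~\ref{fact:regularrectangle}), and correctly handles interior dominoes by unfolding to $\tilde\cD = [0,L_1]\times[0,L_2L_3]$. But the argument has a genuine gap: you never actually dispose of the ``fold dominoes.'' You explicitly defer them to ``a finite, in principle computer-verifiable, case analysis per fold-domino orbit,'' but no such reduction sequences are exhibited and no argument is given that they exist. That is precisely the hard content of the theorem, not a routine endgame as in Lemma~\ref{lemma:22L}.

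The paper closes this gap with a clean trick you are missing: it uses \emph{two} unfoldings simultaneously, $\cD_1 = [0,L_1L_2]\times[0,L_3]$ and $\cD_2 = [0,L_1]\times[0,L_2L_3]$, and observes that a domino of $\cD$ in the $e_1$ direction respects the path, one in the $e_2$ direction can always be unfolded to $\cD_2$, and one in the $e_3$ direction can always be unfolded to $\cD_1$. Hence \emph{every} generator $\bt_{d;\phi}$ (whose unique off-path domino is some $d$) can be unfolded to at least one regular rectangle, and there are no leftover fold dominoes at all. This also explains why the paper normalizes so that $L_1, L_3 \ge 3$ (the ``middle'' index can be $2$), rather than your $L_1 \ge L_2 \ge L_3$: with your ordering, if $L_3 = 2$ then the only alternative folding $[0,L_1L_2]\times[0,L_3]$ has a side of length $2$ and Fact~\ref{fact:regularrectangle} does not apply, so a single-unfolding strategy is stuck. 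Adopting the two-unfolding argument (with the paper's normalization) eliminates your deferred case analysis entirely.

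Two smaller remarks: you should make $\bt_{+1} \sim \bt_{-1}$ precise; the paper does this via Fact~\ref{fact:2233} by arranging the folded $\bt_{\pm1;i}$ to live in a common $[0,3]\times[0,2]^2\times[0,3]$ corner box, rather than by appealing directly to regularity of $\tilde\cD$. And your classification of which dominoes are ``fold dominoes'' depends on the fast/slow order of the snake inside $[0,L_2L_3]$; as stated it is not quite consistent with Example~\ref{example:hamiltonianbox}, though this is a bookkeeping issue, not a mathematical one.
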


\begin{proof}
The cases where at least two of the $L_i$ equal $2$
have already been discussed.
We may therefore assume $L_1, L_3 \ge 3$.

Consider the rectangles $\cD_1 = [0,L_1L_2] \times [0,L_3]$,
with Hamiltonian path $\gamma_1$,
and $\cD_2 = [0,L_1] \times [0,L_2L_3]$,
with Hamiltonian path $\gamma_2$.
As we saw in Example \ref{example:foldbox},
there is a {\em folding} procedure
from $\cD_i$ to $\cD$
and an {\em unfolding} procedure from $\cD$ to $\cD_i$
(for $i \in \{1,2\}$).

A tiling of $\cD_i \times [0,N]$ (for $i \in \{1,2\}$)
can always be folded to obtain a tiling of $\cD \times [0,N]$.
Both $\cD_1$ and $\cD_2$ are rectangles
satisfying the conditions of Fact \ref{fact:regularrectangle}.
We therefore have tilings $\bt_{\pm 1}$
of $\cD_i \times [0,4]$,
contructed as in Figure \ref{fig:tpm}
and satisfying $\Tw_{\ZZ}(\bt_{\pm 1}) = \pm 1$
(notice that $\cD_i \times [0,4]$ has dimension $3$,
so that we are in the situation where twist assumes values in $\ZZ$).
Fold them to obtain tilings $\bt_{\pm 1;i}$ of $\cD \times [0,4]$.
If the tilings $\bt_{\pm 1}$ of $\cD_i \times [0,4]$
are constructed as in Figure \ref{fig:tpm} then
$\bt_{\pm 1;i}$ are essentially
% It is not hard to verify that they can be taken to be
tilings of the corner
$[0,3] \times [0,2]^2 \times [0,3]$ box.
More precisely:
every domino in $\bt_{\pm 1;i}$ is either contained in the box above
or disjoint from it;
the dominoes outside the box
are the same in all four tilings and respect the path.
The tilings $\bt_{\pm 1;i}$ have twist $1 \in \ZZ/(2)$.
Thus, from Fact \ref{fact:2233}, they are all equivalent
(i.e., $\bt_{+1;1} \approx \cdots \approx \bt_{-1;2}$).
Let $\bt_1 = \bt_{+1;1}$, a tiling of $\cD \times [0,4]$:
we have $\bt_1 \ast \bt_1 = e$ (in the domino group).
We claim that $\bt_1$ generates $G^{+}_{\cD}$
(the proof of the claim will complete the proof the theorem).

A domino $d \subset \cD$,
formed by unit cubes $\gamma_0(k_0)$ and $\gamma_0(k_1)$,
{\em can be unfolded} to $\cD_i$ ($i \in \{1,2\}$)
if and only if $\gamma_i(k_0)$ and $\gamma_i(k_1)$ are adjacent.
Notice that a domino in the direction $e_1$ respects the path
and can therefore be unfolded to both $\cD_1$ and $\cD_2$.
A domino in the direction $e_2$ can always be unfolded to $\cD_2$
but usually not to $\cD_1$.
A domino in the direction $e_3$ can always be unfolded to $\cD_1$
but usually not to $\cD_2$.
In particular, every domino that respects the path can be unfolded
to either one among $\cD_1$ and $\cD_2$
and every domino can be unfolded 
to at least one among $\cD_1$ and $\cD_2$

As in the construction detailed in Section \ref{section:hamiltonian},
let $d \subset \cD$ be a domino that does not respect the path $\gamma_0$,
let $\phi$ be a possible value of the flux and
let $\bt_{d;\phi}$ be the corresponding tiling of $\cD\times [0,N]$.
The tiling $\bt_{d;\phi}$ has a unique domino which does not 
respect the path and therefore can be unfolded to obtain
a tiling $\tilde\bt$ of $\cD_i \times [0,N]$
for some choice of $i \in \{1,2\}$.
From Fact \ref{fact:regularrectangle},
there exists a finite sequence of flips taking
$\tilde\bt \ast \bt_{\nvert}$ to a product of finitely many copies
of $\bt_{\pm 1}$ (tilings of $\cD_i \times [0,\tilde N]$ 
for some even $\tilde N \ge N$).
Fold this sequence of flips to obtain a similar sequence
from $\bt_{d;\phi} \ast \bt_{\nvert}$ to a product of finitely many copies
of $\bt_{\pm 1;i}$ (tilings of $\cD \times [0,\tilde N]$).
From what we saw above, 
$\bt_{d;\phi}$ is then equivalent to some power of $\bt_1$.
This completes the proof of the claim and of the theorem.
\end{proof}

%%%%%%%%%%%%%%%%%%%%%%%%%%%%%%%%%%%%%%%%%%%%%%%%%%%%%%%%%%%%%%%%%%%%%%%

\section{Regularity of boxes for $n > 4$}
\label{section:box5}

The following lemma completes the proof of Theorem \ref{theo:regularbox}.

\begin{lemma}
\label{lemma:box5}
Consider $n > 4$.
Consider a cubiculated box $\cD = [0,L_1] \times \cdots \times [0,L_{n-1}]$,
all $L_i \ge 2$, at least one of the $L_i$ even.
The box $\cD$ is regular.
\end{lemma}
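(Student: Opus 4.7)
The plan is to proceed by induction on $n$, with base case $n = 4$ provided by Theorem \ref{lemma:box4}. So assume the lemma holds in ambient dimension $n-1 \ge 4$, and let $\cD = [0,L_1] \times \cdots \times [0,L_{n-1}]$ satisfy the hypotheses, with $n \ge 5$. The strategy mimics the proof of Theorem \ref{lemma:box4}: use folding/unfolding as in Example \ref{example:foldbox} to reduce to a lower-dimensional box $\cD' \subset \RR^{n-2}$ obtained by combining two adjacent factors $[0,L_k] \times [0,L_{k+1}]$ into a single factor $[0,L_k L_{k+1}]$. The box $\cD'$ has all side lengths $\ge 2$ (since $L_k L_{k+1} \ge 4$) and retains an even side length, either inherited from $\cD$ or coming from $L_k L_{k+1}$ itself, so the inductive hypothesis applies and $G_{\cD'} \simeq \ZZ/(2) \oplus \ZZ/(2)$.

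First I would construct a candidate generator $\bt_1$ of $G^{+}_{\cD} \simeq \ZZ/(2)$ by folding a twist-$1$ generator $\bt_1'$ of $G^{+}_{\cD'}$, guaranteed by regularity of $\cD'$, to a tiling of $\cD \times [0,N_1]$. Since folding takes flips in $\cD'$ to flips in $\cD$, just as in the proof of Theorem \ref{lemma:box4}, and the construction is compatible with the Kasteleyn signs, one verifies that $\Tw(\bt_1) = 1$ and $\bt_1 \ast \bt_1 \sim e$. Next, Lemma \ref{lemma:generators} reduces the problem to showing that every generator $\bt_{d;\phi}$, with $d \subset \cD$ a domino not respecting a fixed Hamiltonian path $\gamma_0$ from Example \ref{example:hamiltonianbox} and $\phi \in \Phi_d$, satisfies $\bt_{d;\phi} \sim \bt_1^{\Tw(\bt_{d;\phi})}$.

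For each such $d$ I would choose a folding pair $(k,k+1)$ whose indices avoid the coordinate direction of $d$. Because $n - 1 \ge 4$ there are at least three consecutive index pairs and each coordinate lies in at most two of them, so such a pair always exists. With that choice every domino of $\bt_{d;\phi}$ lifts to the unfolded box $\cD'$: path-respecting dominoes lift by the compatibility of the Hamiltonian paths, and $d$ itself lifts because its direction is disjoint from the folded pair. Unfolding yields a tiling $\tilde\bt$ of $\cD' \times [0,N]$; regularity of $\cD'$ expresses $\tilde\bt$, after concatenation with some $\bt_{\nvert,M}$, as a power of $\bt_1'$, and refolding the corresponding flip sequence gives $\bt_{d;\phi} \sim \bt_1^s$, with $s = \Tw(\bt_{d;\phi})$ forced by Theorem \ref{lemma:fliptrit}.

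The main obstacle will be showing that the definition of $\bt_1$ is intrinsic, i.e., that candidate generators built from different folding pairs all represent the same nontrivial element of $G^{+}_{\cD}$. This can be handled by exhibiting all such candidates inside a common small sub-box of shape $[0,3] \times [0,2]^{n-4} \times [0,3]$, where the analysis reduces to a finite computation in the spirit of Fact \ref{fact:2233} and Lemma \ref{lemma:22L}; alternatively, one can simply invoke the surjective twist homomorphism $G^{+}_{\cD} \to \ZZ/(2)$ together with the existence of a twist-$1$ tiling to force $G^{+}_{\cD}$ to be a two-element group. A secondary technical point is checking that the folding procedure sends twist-$1$ tilings to twist-$1$ tilings, which follows by tracking the Kasteleyn sign computation of Definition \ref{defin:twist} through the combinatorial folding of Example \ref{example:foldbox}.
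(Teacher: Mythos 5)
Your overall strategy matches the paper's: induct on $n$, fold to lower-dimensional boxes $\cD_k$ (your $\cD'$) as in Example \ref{example:foldbox}, and use Lemma \ref{lemma:generators} to reduce to the path-violating generators $\bt_{d;\phi}$, unfolding each to a $\cD_k$ compatible with $d$. Up to this point you are on track; the bound ``at most two pairs are ruled out'' is a slight overcount (the paper observes that a single domino rules out at most one $\cD_k$), but this is harmless.

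The genuine gap is exactly the issue you flag as the ``main obstacle,'' and neither of your two proposed resolutions closes it. When $\Tw(\bt_{d;\phi}) = 1$, unfolding to a $\cD_k$ chosen to suit $d$ writes $\bt_{d;\phi}$ as (the fold of) a nontrivial power of the generator $\bt_1'$ \emph{of that particular $\cD_k$}, and you must then identify the folded image of $\bt_1'$ with a single fixed $\bt_1$, uniformly over $k$. Your first resolution, a ``finite computation'' inside a box $[0,3]\times[0,2]^{n-4}\times[0,3]$, is an $(n-2)$-dimensional problem that grows with $n$, so it is not a single check that can settle the induction step for all $n > 4$, and you do not carry it out. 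Your second resolution is simply invalid: a surjection $G^{+}_{\cD} \to \ZZ/(2)$ bounds $G^{+}_{\cD}$ from \emph{below}, not above, so it cannot force the group to have only two elements (compare $[0,2]^3$ where $G^{+}$ maps onto $\ZZ/(2)$ yet is infinite). The paper sidesteps the whole identification problem: it fixes once and for all a single twist-$1$ generator $\bt_1 = \bt_{d_1;\phi_1}$ and, in the twist-$1$ case, works with the \emph{product} $\bt_{d;\phi}\ast\bt_1^{-1}$. This product has only two path-violating dominoes, $d$ and $d_1$, which rule out at most two values of $k$; since $n-2 \ge 3$ there remains a $\cD_k$ to which the entire product unfolds, and because the product has twist $0$ the already-established twist-$0$ argument applies directly. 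That step --- comparing against a fixed $\bt_1$ by passing to the product before unfolding --- is the key idea your proposal is missing.
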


\begin{proof}
The proof is by induction on $n$;
Theorem \ref{lemma:box4} serves as the basis of the induction.

Consider $n > 4$ and a box $\cD$ as in the statement.
For $k \le n-2$, let
$\cD_k = [0,L_1] \times \cdots \times
[0,L_kL_{k+1}] \times \cdots \times [0,L_{n-1}]$.
From Example \ref{example:foldbox} we know that each $\cD_k$
can be folded to obtain $\cD$.
By induction, we know that each $\cD_k$ is regular;
notice that if $n = 5$ we still have $L_kL_{k+1} > 2$.
As in the proof of Theorem \ref{lemma:box4},
any domino is compatible with unfolding
to all but possibly one $\cD_k$.
Of course, dominoes which respect the path
can be unfolded to any $\cD_k$.

We first notice that there exist tilings of twist $1 \in \ZZ/(2)$
of $\cD \times [0,N]$ for some even $N$.
As discussed in Section \ref{section:hamiltonian},
any tiling $\bt$ of $\cD \times [0,N]$, $N$ even,
is homotopic to a product of tilings $\bt_{d;\phi}$
containing a single domino which does not respect the path.
Thus, at least one of them has twist $1$:
call it $\bt_1 = \bt_{d_1;\phi_1}$.
Let $\bt_0$ be the vertical tiling of $\cD \times [0,2]$.

We prove that if $\Tw(\bt_{d;\phi}) = 0$ then $\bt_{d;\phi} \sim \bt_0$.
Indeed, $\bt_{d;\phi}$ can be unfolded to some $\cD_k$
to obtain a tiling $\bt_2$ of $\cD_k \times [0,N]$.
By the definition of twist, $\Tw(\bt_2) = 0$.
Since $\cD_k$ is regular, there exists $N_2$ even 
and a sequence of flips in $\cD_k \times [0,N+N_2]$
taking $\bt_2 \ast \bt_{\nvert,N_2}$ to $\bt_{\nvert,N+N_2}$.
Fold this sequence of flips to obtain the desired homotopy in $\cD$.

We prove that if $\Tw(\bt_{d;\phi}) = 1$ then $\bt_{d;\phi} \sim \bt_1$.
Indeed, $d$ rules out at most one value of $k$ (for unfolding)
and $d_1$ rules out at most another value.
There is still at least one value of $k$ such that
$\bt_{d;\phi} \ast \bt_1^{-1}$ can be unfolded to $\cD_k \times [0,N]$.
We have $\Tw(\bt_{d;\phi} \ast \bt_1^{-1}) = 0$ and therefore,
as in the previous paragraph, a sequence of flips in $\cD_k$.
Fold the sequence as above and we are done.
\end{proof}

%%%%%%%%%%%%%%%%%%%%%%%%%%%%%%%%%%%%%%%%%%%%%%%%%%%%%%%%%%%%%%%%%%%%%%%

\section{Proof of Theorem \ref{theo:smallM}}
\label{section:smallM}

We are ready to proceed to the proof of Theorem \ref{theo:smallM}.
The proof is similar to that of Theorem 2 from \cite{regulardisk},
but, due to the finiteness of $G_{\cD}$, significantly simpler.

\begin{proof}[Proof of Theorem \ref{theo:smallM}]
Let $\cD$ be a regular region and $\cC_{\cD}$ be its complex.
Let $\Pi: \tilde\cC_{\cD} \to \cC_{\cD}$ be its universal cover
so that $\tilde\cC_{\cD}$ is a simply connected finite complex.
Let $\tilde\cP$ be the finite set of vertices of  $\tilde\cC_{\cD}$.
Let $\emptyplug \in \tilde\cP$ be a base point,
fixed from now on, satisfying $\Pi(\emptyplug) = \emptyplug\in\cP$.
A tiling of $\cR_{0,N;\emptyplug,p}$ is a walk in $\cC_{\cD}$
and can therefore be lifted to a continuous path in $\tilde\cC_{\cD}$,
starting at $\emptyplug \in\tilde\cP$ and ending in
an element of $\Pi^{-1}[\{p\}] \subset \tilde\cP$.
For each $p \in \tilde\cP$,
let $\bt_p$ be a path in $\tilde\cC_{\cD}$
from $\emptyplug$ to $p$, of length $N_p$.
Assume $\bt_{\emptyplug}$ to be the path of length $0$
so that $N_{\emptyplug} = 0$.

Let $p_0, p_1 \in \tilde\cP$.
For every floor $\bff$ from $p_0$ to $p_1$,
there exists a homotopy fixing endpoints between
$\bt_{p_0} \ast \bff$ and $\bt_{p_1}$.
By construction, there exists an even integer 
$M_{p_0,p_1,\bff} \ge \max\{N_{p_0},N_{p_1}\}$ such that
\[ \bt_{\nvert,M_{p_0,p_1,\bff} - N_{p_0}}
\ast \bt_{p_0} \ast \bff \approx
\bt_{\nvert,M_{p_0,p_1,\bff} - N_{p_1}}
\ast \bt_{p_1}. \]
Let $M$ be even and equal to or larger than
the maximum among all $M_{p_0,p_1,\bff}$.

Consider a tiling $\bt_{\dagger}$ as a (continuous) path of length $N_{\dagger}$
in $\tilde\cC_{\cD}$ from $\emptyplug$ to $p_{\dagger} \in \tilde\cP$.
For $k \in \ZZ$, $0 \le k \le N_{\dagger}$,
let $p_k$ be the $k$-th vertex of the path $\bt_\dagger$
so that $p_0 = \emptyplug$ and $p_{N_{\dagger}} = p_{\dagger}$.
Let $\bt_{\dagger,k}$ be the restriction of the original path $\bt_{\dagger}$
to $[k,N]$ so that
$\bt_{\dagger,k} \in \cT(\cR_{k,N_{\dagger};p_k,p_{\dagger}})$.
We construct a homotopy $H$ from $\bt_{\dagger}$ to $\bt_{p_{\dagger}}$.
For $k \in \ZZ$, $0 \le k \le N_{\dagger}$,
set $H(k) = \bt_{p_k} \ast \bt_{\dagger,k}$.
Notice that $H(0) = \bt_{\dagger}$ and $H(N_{\dagger}) = \bt_{p_{\dagger}}$.
In order to move from $H(k)$ to $H(k+1)$
we proceed as in the previous paragraph,
rewriting $H(k) = \bt_{p_k} \ast \bff \ast \bt_{\dagger,k+1}$.
This step can be accomplished in
$\cR_{M_{p_k,p_{k+1},\bff}+(N_{\dagger}-k-1)}$.
Thus, the entire homotopy can be constructed 
as a sequence of flips in $\cR_{N_{\dagger}+M}$,
completing the proof.
\end{proof}

\begin{coro}
\label{coro:smallMcoro}
Let $\cD \subset \RR^{n-1}$ be a regular region;
let $M$ be as in Theorem \ref{theo:smallM}.
Let $\bt_0, \bt_1$ be tilings of $\cR_N$.
If both $\bt_0$ and $\bt_1$ have at least $M$ vertical floors
and $\Tw(\bt_0) = \Tw(\bt_1)$ then $\bt_0 \approx \bt_1$.
\end{coro}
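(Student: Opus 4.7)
The plan is to reduce Corollary \ref{coro:smallMcoro} to Theorem \ref{theo:smallM} by using the $M$ internal vertical floors in each of $\bt_0$ and $\bt_1$ as the ``workspace'' that Theorem \ref{theo:smallM} otherwise obtains by appending a fresh $\bt_{\nvert,M}$. The strategy has three steps: first, slide all $M$ vertical floors of each tiling to the top via internal flips; second, apply Theorem \ref{theo:smallM} to the shortened tilings; third, chain the flip sequences together.

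More precisely, I would establish a \emph{sliding lemma}: if a tiling $\bt$ of $\cR_N$ has a vertical floor at some internal pair of levels $[k-1,k+1]$, then a finite sequence of flips inside $\cR_N$, localized in a fixed-size window around that slab, moves this vertical floor upward by two levels while keeping the rest of the tiling fixed. The mechanism is that a vertical floor has plug $\emptyplug$ on both of its outer boundaries, so it is compatible with any horizontal floor adjacent to it, and the swap amounts to flipping between two specific tilings of a three-level block $\cD \times [k-1,k+2]$. Iterating the sliding lemma $O(N)$ times lets me write $\bt_0 \approx \bt_0^\sharp \ast \bt_{\nvert,M}$ for some tiling $\bt_0^\sharp$ of $\cR_{N-M}$, and likewise $\bt_1 \approx \bt_1^\sharp \ast \bt_{\nvert,M}$. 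Since twist is flip-invariant and additive under concatenation, and $\Tw(\bt_{\nvert,M}) = 0$, the hypothesis $\Tw(\bt_0) = \Tw(\bt_1)$ yields $\Tw(\bt_0^\sharp) = \Tw(\bt_1^\sharp)$. Theorem \ref{theo:smallM} applied to $\bt_0^\sharp$ and $\bt_1^\sharp$ gives a flip sequence inside $\cR_N = \cR_{(N-M)+M}$ connecting $\bt_0^\sharp \ast \bt_{\nvert,M}$ to $\bt_1^\sharp \ast \bt_{\nvert,M}$; chaining, $\bt_0 \approx \bt_1$.

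The main obstacle is the sliding lemma. The subtle point is that we must reorganize a vertical slab together with its neighboring horizontal floor inside a fixed three-level block, with no extra height available for scratch work. Equivalently, one must show that any tiling of $\cD \times [0,3]$ of the form ``vertical slab followed by horizontal floor'' is flip-equivalent to any other tiling of the same twist and block structure. This is a finite combinatorial statement for each regular $\cD$, but proving it in full generality requires either a careful case analysis or a local version of the regularity arguments from Sections~\ref{section:22L}--\ref{section:box5}, now applied to establish the sliding move rather than to enumerate generators of $G_{\cD}$. Once the sliding lemma is in place, the remainder of the argument is an essentially mechanical chaining of Theorem \ref{theo:smallM} with the decomposition above.
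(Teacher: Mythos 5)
Your high-level plan coincides with the paper's: show $\bt_i \approx \bt_i^{\sharp} \ast \bt_{\nvert,M}$ by moving vertical floors, invoke flip-invariance of $\Tw$ to get $\Tw(\bt_0^{\sharp})=\Tw(\bt_1^{\sharp})$, apply Theorem~\ref{theo:smallM}, and chain. The second and third steps are fine. The gap is in the sliding step, and it is a real one, not just a matter of detail. First, the hypothesis (as produced by Lemma~\ref{lemma:verts}) counts \emph{single} vertical floors, i.e.\ floors $\bff_k=(p_{k-1},\emptyset,p_k)$ with $p_{k-1}\sqcup p_k=\cD$; such a floor need not have $\emptyplug$ at either interface and need not pair up with an adjacent one to form a slab $\cD\times[k-1,k+1]$. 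Your lemma is phrased for the slab, a strictly different and stronger object, so it does not engage the actual hypothesis. Second, the slide you describe is impossible as stated: inside a window $\cD\times[k-1,k+2]$ whose boundary plugs are fixed, a slab $\cD\times[k-1,k+1]$ cannot exchange places with the floor above it, because after the exchange the plug at height $k+2$ would have to equal $\emptyplug$, while it is determined from outside the window and is in general not $\emptyplug$.

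The move the paper alludes to with ``vertical floors can be moved up and down by flips'' is more primitive and completely explicit, and it concerns a single vertical floor, not a slab. If floor $j$ is vertical ($p_{j-1}\sqcup p_j=\cD$) and $d=s\cup s'\in f_{j+1}$ is a horizontal domino of the floor above, then $d\subset \cD\setminus(p_j\cup p_{j+1})\subseteq p_{j-1}$, so the vertical dominoes $s\times[j-2,j]$ and $s'\times[j-2,j]$ are both present; flipping this pair to $d\times[j-2,j-1]$ and $d\times[j-1,j]$, then flipping $d\times[j-1,j]$ with $d\times[j,j+1]$ to $s\times[j-1,j+1]$ and $s'\times[j-1,j+1]$, carries $d$ down two levels. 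Iterating over all of $f_{j+1}$ leaves floors $j$ and $j+1$ vertical, with changed plugs. This uses no property of $\cD$ at all, so your suggestion that the sliding step needs ``a local version of the regularity arguments from Sections~\ref{section:22L}--\ref{section:box5}'' is a misdiagnosis: it would needlessly entangle the corollary with the machinery it is meant to supplement. What your sketch still omits, and what actually requires care, is the bookkeeping once the correct push is in hand: verifying that repeated pushes drive $M$ vertical floors to the top $M$ positions with plugs aligned so that they assemble into $\bt_{\nvert,M}$ exactly.
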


\begin{proof}
We know that vertical floors can be moved up and down by flips.
In other words, there exist tilings $\bt_{0,\bullet}, \bt_{1,\bullet}$
of $\cR_{N-M}$ with $\bt_i \approx \bt_{i,\bullet} \ast \bt_{\nvert,M}$
(for $i \in \{0,1\}$).
We also have $\Tw(\bt_{i,\bullet}) = \Tw(\bt_i)$ and therefore
$\Tw(\bt_{0,\bullet}) = \Tw(\bt_{1,\bullet})$.
By regularity, $\bt_{0,\bullet} \sim \bt_{1,\bullet}$.
By Theorem \ref{theo:smallM},
$\bt_{0,\bullet} \ast \bt_{\nvert,M} \approx
\bt_{1,\bullet}\ast \bt_{\nvert,M}$, as desired.
\end{proof}

We are now have all the ingredients to prove Corollary \ref{coro:twin}.

\begin{proof}[Proof of Corollary \ref{coro:twin}]
We know that twist partitions $\cT(\cR_N)$ into two subsets
$\tilde T_{+}$ (twist equal to $0 \in \ZZ/(2)$) and
$\tilde T_{-}$ (twist equal to $1$).
We have $|\tilde T_{+}| - |\tilde T_{-}| = \Delta(\cR_N)$.
From Lemma \ref{lemma:lambda}, $|\Delta(\cR_N)|$
is exponentially smaller than $|\cT(\cR_N)|$ 
(as a function of $N$).

Let $M$ be as in Corollary \ref{coro:smallMcoro}.
From Lemma \ref{lemma:verts}, for $N$ sufficiently large,
most tilings of $\cR_N$ admit at least $M$ vertical floors.
Let $\bt_0, \bt_1$ be two such tilings with $\Tw(\bt_i) = i \in \ZZ/(2)$.
Let $T_i \subset \cT(\cR_N)$ be the $\approx$-equivalence class of $\bt_i$.
We have $T_i \subseteq \tilde T_i$.
From Corollary \ref{coro:smallMcoro}, all tilings which have
at least $M$ vertical floors belong to $T_0 \cup T_1$.
From Lemma \ref{lemma:verts},
$|\cT(\cR_N) \smallsetminus (T_0 \cup T_1)|/|\cT(\cR_N)|$
tends to zero exponentially in $N$.
The desired results follow.
\end{proof}

\bigskip

\medskip

\footnotesize

\noindent
Caroline J. Klivans \\
Division of Applied Mathematics, Box F \\
182 George Street \\
Brown University \\
Providence, RI 02912 \\
\url{klivans@brown.edu}

\smallskip

\noindent
Nicolau C. Saldanha \\
Departamento de Matem\'atica, PUC-Rio \\
Rua Marqu\^es de S\~ao Vicente, 225, Rio de Janeiro, RJ 22451-900, Brazil \\
\url{saldanha@puc-rio.br}


\begin{thebibliography}{10}

\bibitem{survey}
F.~Ardila and R.~Stanley
\newblock{Tilings},
\newblock{\em The mathematical Intelligencer}, 32, 2010.

  \bibitem{ASM1}
  N. Elkies, G. Kuperberg, M. Larsen and J. Propp,
  \newblock{Alternating-sign matrics and domino tilings I,}
  \newblock{\em Journal of Algebraic Combinatorics}, 1, no. 2, 1992.

\bibitem{ASM2}
  N. Elkies, G. Kuperberg, M. Larsen and J. Propp,
  \newblock{Alternating-sign matrics and domino tilings II,}
  \newblock{\em Journal of Algebraic Combinatorics}, 1, no. 3, 1992.

\bibitem{FT}
  M. Fisher and H. Temperley,
  \newblock{Dimer problem in statistical mechanics -- an exact result,}
  \newblock{\em Philos. Mag}, 6, 1961.

\bibitem{FKMS}
J. Freire, C. J. Klivans, P. H. Milet and N. C. Saldanha,
\newblock {On the connectivity of spaces of three-dimensional tilings,}
to appear in Transactions of the AMS, also in
\newblock{\em arXiv:1702.00798.}


\bibitem{Kasteleyn}
  P. Kasteleyn,
  \newblock{The statistics of dimers on a lattice I.  The number of dimer arrangements on a quadratic lattice,}
  \newblock{\em Phys.} 27, 1961.




\bibitem{Kenyon_lectures}
R.~Kenyon.
\newblock{Lectures on dimers}.
\newblock{\em arXiv:0910.3129}.

\bibitem{4dominosource}
C. J. Klivans and N. C. Saldanha,
Domino tilings and flips in dimensions 4 and higher:
Examples and Additional Material,
\href{https://www.dam.brown.edu/people/cklivans/tiling_code.html}%
{\tt https://www.dam.brown.edu/people/cklivans/tiling\_code.html} or
\href{http://www.mat.puc-rio.br/~nicolau/4domino/index.html}%
{\tt http://www.mat.puc-rio.br/$\sim$nicolau/4domino/index.html}.




\bibitem{primeiroartigo}
P.~H.~Milet and N.~C.~Saldanha.
\newblock Flip invariance for domino tilings of three-dimensional regions with
  two floors. 
\newblock {\em Discrete \& Computational Geometry},
June 2015, Volume 53, Issue 4, pp 914--940.

\bibitem{segundoartigo}
P.~H.~Milet and N.~C.~Saldanha.
\newblock Domino tilings of three-dimensional regions: flips and twists.
\newblock {\em arXiv:1410.7693}.

\bibitem{Igor}
  I. Pak,
  \newblock{The complexity of generalized domino tilings,}
  \newblock{\em Electronic Journal of Combinatorics}, 20, no 4, 2013. 


\bibitem{saldanhatomei1995}
N.~C.~Saldanha, C.~Tomei, M.~A.~Casarin~Jr., and D.~Romualdo.
\newblock Spaces of domino tilings.
\newblock {\em Discrete \& Computational Geometry}, 14(1):207--233, 1995.

\bibitem{saldanha2002}
N.~C.~Saldanha.
\newblock Singular polynomials of generalized Kasteleyn matrices.
\newblock {\em Journal of Algebraic Combinatorics}, 16(2):195--207; Sep 2002.

\bibitem{regulardisk}
N.~C.~Saldanha.
\newblock Domino tilings of cylinders:
the domino group and connected components under flips.
\newblock {\em arXiv:1912.12102}.

\bibitem{probabledomino}
N.~C.~Saldanha.
\newblock 
Domino Tilings of Cylinders: 
Connected Components under Flips and Normal Distribution of the Twist.
{\em Electronic Journal of Combinatorics},
P1.28, Volume 28, issue 1 (2021),  doi:10.37236/9779.


\bibitem{thurston1990}
W.~P.~Thurston.
\newblock {Conway's Tiling Groups}.
\newblock {\em The American Mathematical Monthly}, 97(8):pp. 757--773, 1990.


\end{thebibliography}
\end{document}